\documentclass[a4paper,leqno]{article} 
\pdfoutput=1
\usepackage[utf8]{inputenc}
\usepackage[T1]{fontenc}
\usepackage{enumitem,amsfonts,amsmath,amssymb,amsthm,stmaryrd}
\usepackage{lmodern} 
\usepackage{dsfont} 
\usepackage{fullpage} 
\usepackage{microtype} 
\usepackage{comment} 
\usepackage{graphicx}
\usepackage{mathrsfs}
\usepackage{tikz} 
\usetikzlibrary {patterns} 
\usepackage{template}
\usepackage{float} 
\usepackage{authblk}
\usepackage{mathrsfs}
\usetikzlibrary{patterns}

\DeclareMathOperator\sP{P}
\DeclareMathOperator\E{E}

\title{Asymptotic direction of a ballistic random walk in a two-dimensional random environment with nonuniform mixing}
\author{Julien \textsc{Allasia}\thanks{\texttt{allasia@math.univ-lyon1.fr}. \ Universite Claude Bernard Lyon 1, CNRS, Ecole Centrale de Lyon, INSA Lyon, Université Jean Monnet, ICJ UMR5208,
69622 Villeurbanne, France.}}
\date{}

\begin{document}
\maketitle

\begin{abstract}
In this paper, we study random walks evolving with a directional bias in a two-dimensional random environment with correlations that vanish polynomially. Using renormalization methods first employed for one-dimensional dynamic environments along with additional ideas specific to this new framework, we show that there exists an asymptotic direction for such a random walk. We also provide examples of classical models for which our results apply.
\end{abstract}

\section{Introduction}

Research on random walks in random environments (RWREs) has been active since the 1970s and has found its motivation in various applied fields. Typically, in a static framework, we allocate to each point in $\ZZ^d$ ($d\geqslant 1$) a probability measure on the set of its neighbors, given by a random process, to determine the law of the jump of a random walk located at this site. Contrary to classical random walks, results as simple as laws of large numbers (LLNs) are often hard to obtain, and strong assumptions describing the dependencies in the environment and the ballisticity of the random walk usually have to be made.

The one-dimensional i.i.d. case is well understood, and a LLN was shown in \cite{Solomon} using ergodicity arguments. In the larger dimension i.i.d. setup, a LLN can be proven under ballisticity conditions. For instance, \cite{Zerner1} used a drift assumption that implies large deviation results. In \cite{SZ} and \cite{Zerner2}, the authors introduced a seminal regeneration argument that gives a LLN under Kalikow's condition; see \cite{Kalikow}. In \cite{Sznitman}, a weaker ballisticity condition known as Sznitman's condition $(T)$ was introduced, which gives a LLN in the uniformly elliptic setting. In any cases, not even the i.i.d. framework is well understood without ballisticity assumptions, and our paper is no exception.


One can wonder if conditions on the dependencies of the environment weaker than the i.i.d. assumption would be sufficient to derive a LLN. In \cite{CZ}, the authors managed to adapt the regeneration argument from \cite{SZ} when the environment is assumed to satisfy some uniform mixing conditions. Recent progress has also been made for one-dimensional dynamic random environments (the environment is allowed to evolve in time). Seen as random walks in dimension $d+1$, where the $(d+1)^\text{th}$ direction is time, they are automatically ballistic in that direction. With this analogy in mind, similar mixing conditions to that of \cite{CZ} were used to derive a LLN; see for instance \cite{AHR}. Asymptotic results were also shown for some particular dynamic environments using their specific properties, like the contact process in \cite{HS} and \cite{MV}, or the environment given by independent simple random walks in \cite{HHSST}.

In \cite{BHT}, a LLN was shown for general one-dimensional dynamic environments satisfying a non-uniform polynomial mixing condition, using multi-scale renormalization methods. The proof fundamentally relies on a monotonicity property of the model (see (2.9) in \cite{BHT}), which is ensured by the dynamic one-dimensional framework and a nearest-neighbor assumption. Generalizing the methods of \cite{BHT} when this essential property is missing was already explored in \cite{Allasia_a} by lifting the nearest-neighbor assumption. In the present paper, we keep this assumption but we move from the dynamic one-dimensional framework to a static two-dimensional one.

More precisely, we assume that we are given a polynomially mixing random environment $\omega$ on $\ZZ^2$, where for each site $x$ in $\ZZ^2$, knowing $\omega(x)$ gives the transition probabilities for the jump of a random walk located at $x$ to one of the four nearest-neighbors of $x$. We consider the random walk starting at the origin in environment $\omega$. We denote its location at time $n$ by $Z_n=(X_n,Y_n)\in\ZZ^2$. In order to use the ideas of \cite{BHT}, we assume to have a strong bias upwards, in the sense that we have exponential quenched estimates for the speed of the random walk and the height of its first cut line (see Assumptions \ref{S:a:ballisticity} and \ref{S:a:cut_line}). This allows us to think of the vertical coordinate as roughly equivalent to time. Thus we will be able to show the existence of an asymptotic direction for our random walk, which is an almost sure limit of $Z_n/\vert Z_n\vert$ when $n$ goes to infinity, where $\vert \cdot \vert$ is the Euclidean norm on $\RR^2$; see Theorem \ref{S:t:direction}. This could be the first step towards showing a LLN in this framework, i.e. the almost sure convergence of $Z_n/n$.

The question of the existence of an asymptotic direction for RWREs has already been discussed in the i.i.d. setup in \cite{Simenhaus} and \cite{DR}. One important result is that if the random walk is transient in the neighborhood of a given direction, then an asymptotic direction can be found using renewal structures. But again these methods fail when we have weaker mixing assumptions for the environment.

Extending the ideas of \cite{BHT} does not merely consist in rewriting its arguments in a different framework. On top of the additional technical considerations about the second coordinate of the random walk (which is deterministic in the dynamic framework, for it is time), it requires finding a way to generalize the lost monotonicity property. More precisely, we need to guarantee that if a random walk starts on the left of another one, it will remain on its left forever, unless one goes around the other from below. This is made possible by choosing the right coupling for our random walks and proving a "barrier" property: see Proposition \ref{S:p:barrier_property} and Figure \ref{S:f:barrier_property} for an illustration of what can happen. Furthermore, the fact that our random walks can revisit the sites of the past calls for an argument to somehow split sample paths into different sections that do not meet, which is the reason behind Assumption \ref{S:a:cut_line} on cut lines.

\paragraph{Outline of the paper.}
In Section \ref{S:s:framework}, we define precisely the framework of this paper by defining static environments and random walks on them, before stating our main result, Theorem \ref{S:t:direction}. Its proof is divided into two parts, which correspond respectively to Sections \ref{S:ss:part1} and \ref{S:ss:part2}. In the first part, we show the existence of two limiting directions that bound the spatial behavior of our random walks in some sense. In the second part, we show that these two directions coincide, which will give the asymptotic direction that we are after. In Section \ref{S:ss:key_properties}, we introduce tools that will be instrumental in both parts of the proof. In Section \ref{S:s:complete_LLN}, we give some ideas and problems that we are facing to show a complete LLN. In Section \ref{S:s:applications}, we present some models for which our results apply.

\paragraph{Conventions.}
\begin{itemize}[leftmargin=*]
    \item $\N$, $\ZZ$ and $\R$ respectively denote the set of natural integers (starting from $0$), relative integers and real numbers. $\N^*$ denotes $\N\setminus \{0\}$, $\RR_+$ denotes $\{x\in\RR,\,x\geqslant 0\}$ and $\RR_+^*$ is $\RR_+\setminus\{0\}.$ If $n\leqslant m$ are two integers, $\llbracket n,m\rrbracket$ is the set of integers $[n,m]\cap\ZZ.$ For a couple $x=(a,b)\in\RR^2$, we write $a=\pi_1(x)$ and $b=\pi_2(x)$, and we refer to them as the horizontal and vertical coordinates of $x$. Mind that $x,y,z$ can all denote points in $\ZZ^2$, contrary to the notation for the random walk $Z_n=(X_n,Y_n)$ where $X_n\in\ZZ$ and $Y_n\in\ZZ$. We denote by $o$ the origin $(0,0)\in\RR^2$, and we set $\mathbf{0}$ to be the everywhere zero function of $\NN^{\ZZ^2}.$ If $S$ is a finite set, $|S|$ and $\# S$ denote the cardinality of $S$.
    \item $c$ denotes a positive constant that can change throughout the paper and even throughout one single computation. Constants that are used again later in the paper will be denoted with an index when they appear for the first time.
    \item Drawings across the paper are not to scale and they do not necessarily represent the random walks in an accurate way: they are only meant to make the reading easier. For instance, sample paths are depicted as smooth curves, although our random walks evolve on $\ZZ^2.$ 
\end{itemize}

\section{Framework}\label{S:s:framework}

\subsection{Environment}\label{S:ss:environment}

Let $S$ be a topological space. We let $\Omega=S^{\ZZ^{2}}$ and we endow it with the product topology and the subsequent $\sigma$-algebra, noted $\mathcal{T}$. Let $\sP$ be a probability measure on $(\Omega,\mathcal{T})$. On $(\Omega,\mathcal{T},\sP)$, the random variable $\mathrm{id}_{\Omega}$ is called a static two-dimensional random environment with law $\sP$. We denote it using the same letter $\omega$ by abuse of notation. Let $\mathbb{S}=\{(1,0),(-1,0),(0,1),(0,-1)\}$ be the Euclidean unit sphere in $\ZZ^2$. For each $s\in S$, we define a probability kernel $p(s,\cdot)$ on $\mathbb{S}$. A random walk on environment $\omega$ is a Markov process $Z=(Z_n)_{n\in\NN}$ whose law satisfies, for every $x\in\ZZ^2$ and $y\in\mathbb{S}$,
\begin{align}\label{d:RW}
\PP^\omega(Z_{n+1}=x+y\vert Z_n=x)=p\left(\omega(x),y\right).
\end{align}
We let $Z_n=(X_n,Y_n)$ for every $n\in\NN.$

\begin{remark}
Mind that static environments on $\ZZ^{2}$ are usually defined as random functions from $\ZZ^{2}$ to $\{(p_y)_{y\in\mathbb{S}}\in\RR_+^{4},\,\sum_y p_y=1\}$. For $x\in\ZZ^2$, we then write $\omega(x)=(\omega_{x,x+y},y\in\mathbb{S})$. Defined so, we can naturally define a random walk on $\omega$ so that for every $x\in\ZZ^2$, $\omega_{x,x+y}$ denotes the probability for the random walk located at $x$ to jump to $x+y$. The probability kernel $p$ here is simply given, for $x\in\ZZ^2$ and $y\in \mathbb{S}$, by $p(\omega(x),y)=\omega_{x,x+y}.$ Our definition therefore encompasses this one, and its asset is that the environment can be any process on $\ZZ^2$ (see Section \ref{S:s:applications} for examples), which we translate into transition probabilities using a transition kernel.
\end{remark}

If $\omega\in\Omega$ and $x_0\in\ZZ^2$, we define the translated environment $\theta^{x_0}\omega:x\in\ZZ^2\mapsto \omega(x_0+x).$ For the rest of the paper, we make the following assumption.

\begin{assumption}[Translation invariance]\label{S:a:translation_inv}
For every $x_0\in\ZZ^2$, $\theta^{x_0} \omega$ has the same law as $\omega$ under $\sP$.
\end{assumption}

Let us now introduce some definitions that will lead us to our essential mixing assumption. Let $B\subseteq\RR^{2}$. We say that $B$ is a box if it is of the form $[a_1,b_1)\times[a_2,b_2)$ with $a_1<b_1$ and $a_2<b_2$. If $B$ is a box, we define its spatial diameter $\mathrm{diam}(B)=b_1-a_1$ and height $h(B)=b_2-a_2$. If $B=[a_1,b_1)\times[a_2,b_2)$ and $B'=[a_1',b_1')\times[a_2',b_2')$ are two boxes, we call $\mathrm{sep}(B,B')$ their vertical separation defined by
\begin{align*}
 \mathrm{sep}(B,B')= \left\{\begin{array}{ll}
              a_{2}-b'_{2}& \text{ if } b'_2< a_2,\\
              a'_2-b_2&\text{ if }b_2<a'_2,\\
              0&\text{ else.}
             \end{array}\right.
\end{align*}

\begin{definition}\label{d:mixing}\ncS{S:c:mixing} Let $a,b,\alpha>0.$ We say that $\omega$ mixes polynomially in the vertical direction with size parameters $(a,b)$ and exponent $\alpha$ if there exists $\ucS{S:c:mixing}=\ucS{S:c:mixing}(a,b,\alpha)>0$ such that for every $h>0,$ for every pair of boxes $B,B'\subseteq \RR^2$ satisfying
$$\max(\mathrm{diam}(B),\mathrm{diam}(B'))\leqslant ah,\quad \max(h(B), h(B'))\leqslant bh,\quad \mathrm{sep}(B,B')\geqslant h,$$
for every pair of $\{0,1\}$-valued functions $f_1$ and $f_2$ on $\Omega$ such that $f_1(\omega)$ is $\sigma(\omega\vert_ {B\cap\ZZ^2})$-measurable and $f_2(\omega)$ is $\sigma(\omega\vert_ {B'\cap\ZZ^2})$-measurable,
    \begin{align*}
    \E[f_1(\omega)f_2(\omega)]\leqslant \E[f_1(\omega)]\E[f_2(\omega)]+ \ucS{S:c:mixing}\, h^{-\alpha}.
    \end{align*}
\end{definition}

We differ the statement of the assumption on the mixing of $\omega$ to later, for we need an assumption on the random walk to choose the size parameters properly.

\subsection{Random walk}\label{S:ss:random_walk}
We will work with random walks jumping at discrete times, but our results also hold in continuous time (in the Poissonian framework); see Remark \ref{S:r:continuous_time}. This is to be expected since our main result, namely the existence of an asymptotic direction, does not give information on the time behavior of the random walk. Actually in this model, time does not play a role in the coupling of random walks as important as in \cite{BHT}; see Section \ref{S:ss:coupling} for more details.

We now define the random walk we are interested in and state our main results. For the sake of clarity, we define it in a simplified intuitive way before introducing a complete construction and a coupling in Section \ref{S:ss:coupling}. For $\omega\in\Omega$ fixed, we say that $(Z_n)_{n\in\NN^*}$ is a random walk on $\omega$ if it is a Markov chain that jumps at time $n$ to $Z_n+z$ ($z\in\mathbb{S}$) with probability $p(\omega(Z_n),z)$.

The assumptions we make on our random walk are quenched information (that is, valid for almost every environment) on its upwards behavior. The first one is a ballisticity assumption that is pretty strong in the sense that it involves a quenched and exponential estimate. Recall that $Z_n=(X_n,Y_n)$ for every $n\in\NN.$

\ncS{S:c:ballisticity}
\begin{assumption}\label{S:a:ballisticity}
There exists $\ucS{S:c:ballisticity},\beta>0$ such that for $\sP$-almost every $\omega$ and for every $n\in\NN$,
$$\PP^\omega\left( Y_n\leqslant \beta^{-1} n\right)\leqslant \ucS{S:c:ballisticity}^{-1} e^{-\ucS{S:c:ballisticity} n}.$$
\end{assumption}

Note that this assumption is not optimal. We could replace the exponential decay with any superpolynomial decay (because our mixing assumption is polynomial), and the same holds for Assumption \ref{S:a:cut_line}. However, it is handy to work with an explicit quantity for the decay.

Now that some control on the location of the random walk has been introduced through parameter $\beta$, we will state our mixing assumption, whose size parameters are tailored to work with Assumption \ref{S:a:ballisticity}. 

\begin{assumption}\label{S:a:mixing_env} 
    We assume that $\omega$ mixes polynomially in the vertical direction with size parameters $(2(2\beta+1),3)$ and exponent $\alpha>12$.
\end{assumption}

Note that this assumption does not require a lower bound for the covariance. This is because the main use of this assumption is Fact \ref{S:p:mixing}, which we will state later on.

The second assumption on the random walk itself calls for some definitions. When trying to adapt the ideas of \cite{BHT}, the history that our random walk accumulates will raise issues. Therefore, it will be very useful to find a time after which our random walk does not revisit the sites visited in the past. In this sense, everything will be as if, considering the random walk after this time, its initial history is everywhere zero.

\begin{definition}\label{S:d:cut_line}
Let $a\in\NN$. Let $M=(M_n)_{n\in\NN}$ be a random walk in $\ZZ^2$. Denote by $T_a$ the hitting times by the process $\pi_2(M)-\pi_2(M_0)$ of $a$. We say that $\RR\times\{\pi_2(M_0)+a\}$ is a cut line for $M$ if $T_a<\infty$ almost surely and if for every $n>T_a,$ $\pi_2(M_n)\geqslant \pi_2(M_0)+a$. In other words, the sample path of $M$ can be split into two parts contained in the two half-planes delimited by the cut line. We set
\begin{align*}
&\Xi(M)=\inf\{a\in\NN,\,\RR\times\{\pi_2(M_0)+a\}\text{ is a cut line for }M\};\\
&T(M)=T_{\Xi(M)}.
\end{align*}
\end{definition}


\ncS{S:c:cut_line}
\begin{assumption}\label{S:a:cut_line}
We assume that there exists $\ucS{S:c:cut_line}>0$ such that for $\sP$-almost every $\omega$, for every $H\in\NN$,
$$\PP^\omega\left(\Xi(Z)\geqslant H\right)\leqslant \ucS{S:c:cut_line}^{-1}e^{-\ucS{S:c:cut_line}H}.$$
\end{assumption}

\begin{remark}
    It might be possible to derive this assumption from Assumptions \ref{S:a:ballisticity} and \ref{S:a:mixing_env} provided that we have some additional assumption of uniform ellipticity upwards, using the renormalization method presented in Section \ref{S:sss:renormalization_scheme}. This remains an open question that would be interesting to solve in order to replace Assumption \ref{S:a:cut_line}, which is complex, by a more classical uniform ellipticity one. Note also that we can show that both Assumptions \ref{S:a:ballisticity} and \ref{S:a:cut_line} hold if we assume that there exists $\zeta>0$ such that $\sP$-almost surely, for every $x\in\ZZ^2$, we have $p(\omega(x),(0,1))\geqslant 1/2+\zeta$. Indeed, in that case, we can couple $Z$ with a standard random walk $\hat{Z}$ on $\ZZ$ which jumps to the right with probability $1/2+\zeta$ and the left with probability $1/2-\zeta$, so that $\hat{Z}$ is a lower-bound for $Y=\pi_2(Z)$. Ballisticity for $\hat{Z}$ is a classical result and straightforwardly implies that of $Z$. As for Assumption \ref{S:a:cut_line}, we can construct a cut line for $Z$ using a cut point of $\hat{Z}$. In order to show that with good probability we do not wait too long to get a cut point for $\hat{Z}$, we show that every point is a cut point with uniform positive probability, and we use independent attempts to get such a point.
\end{remark}

\begin{notation}\label{S:n:A}
    We denote by $\mathcal{A}$ the almost sure set of environments $\omega$ that make both Assumptions \ref{S:a:ballisticity} and \ref{S:a:cut_line} hold.
\end{notation}

The goal of this paper is to show the existence of an asymptotic direction for $Z=(Z_n)_{n\in\NN}$. This is stated in the following theorem, where $\vert \cdot \vert$ denotes the Euclidean norm on $\RR^2$ and $\mathbb{S}^1$ the unit Euclidean sphere.

\ncS{S:c:concentration}
\begin{theorem}[Asymptotic direction]\label{S:t:direction}
There exists $\chi\in\mathbb{S}^1$ with $\pi_2(\chi)>0$ such that $$\PP\text{-almost surely,}\;\;\; \frac{Z_n}{\vert Z_n \vert} \xrightarrow[n\to\infty]{}\chi,$$ where $\frac{Z_n}{\vert Z_n\vert}$ is almost surely well-defined for $n$ large. Moreover we have a polynomial rate of convergence:
\begin{align*}
\forall \varepsilon>0,\;\exists\, \ucS{S:c:concentration}=\ucS{S:c:concentration}(\varepsilon)>0,\;\forall n\in\NN^*,\;\;\;\PP\left(\bigl\vert Z_n- \vert Z_n \vert \chi \bigr\vert\geqslant\varepsilon \vert Z_n \vert \right)\leqslant \ucS{S:c:concentration}\,n^{-\alpha/4}.\end{align*}
\end{theorem}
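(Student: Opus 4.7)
The plan is to follow the two-part structure indicated in the introduction. First I would establish two deterministic slopes $\underline{v} \leqslant \overline{v}$ such that $\PP$-almost surely $\liminf_{n} X_n/Y_n = \underline{v}$ and $\limsup_{n} X_n/Y_n = \overline{v}$; note that $Y_n \to \infty$ almost surely by Assumption \ref{S:a:ballisticity}, so these ratios are eventually defined. Then, in a second step, I would show $\overline{v} = \underline{v}$, which immediately yields the asymptotic direction $\chi = (\overline{v},1)/|(\overline{v},1)|$ with $\pi_2(\chi)>0$.

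For the first step, I would set up a multi-scale renormalization in the spirit of \cite{BHT}, adapted to the static two-dimensional setting. Choose scales $L_k$ growing geometrically and, for each $k$, a bad event $A_k$ describing that the walk fails to track the candidate slope within tolerance $\varepsilon_k$ by the time it reaches vertical height of order $L_k$. Cutting the trajectory at a horizontal line (whose existence at the right height is guaranteed by Assumption \ref{S:a:cut_line}) splits it into two pieces whose relevant environments are confined to vertically separated boxes. The mixing Assumption \ref{S:a:mixing_env}, whose size parameters $(2(2\beta+1),3)$ and exponent $\alpha > 12$ are calibrated precisely for this scheme, then decorrelates these two pieces up to a polynomial error. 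Iterating yields a recurrence roughly of the form $\PP(A_{k+1}) \leqslant c\,\PP(A_k)^2 + c\,L_k^{-\alpha}$, which after a few doublings produces summable bounds by Borel--Cantelli. The deterministic character of $\underline{v}, \overline{v}$ follows by combining translation invariance (Assumption \ref{S:a:translation_inv}) with the tail triviality produced by mixing.

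For the second step, I would assume $\underline{v} < \overline{v}$ and derive a contradiction using the coupling of Section \ref{S:ss:coupling} together with the barrier property of Proposition \ref{S:p:barrier_property}. Consider two copies of the walk started at different points: under the coupling, if one starts to the left of the other it remains to the left unless one circumvents the other from below, and Assumption \ref{S:a:cut_line} on cut lines bounds the probability of this "going around" event by an exponentially small quantity. If $\underline{v} < \overline{v}$ held, one could find two deterministic scales at which the walk is near the lower and upper slopes respectively; restarting at a cut line between these scales and coupling with a fresh walk (whose asymptotic behavior is constrained by Part 1) would then produce an incompatibility via the barrier, ruling out such oscillation and forcing $\overline{v} = \underline{v}$.

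The polynomial rate $n^{-\alpha/4}$ should be read off the renormalization bound: at the scale $L_k \sim n$ the probability of a macroscopic deviation is at most of order $L_k^{-\alpha/4}$, the exponent $4$ reflecting the $O(1)$ polynomial losses incurred through the scale doublings needed to close the inductive step. The hardest step will clearly be the second one: the barrier property is a subtle replacement for the monotonicity of \cite{BHT}, and controlling the "going around from below" correction while still extracting a genuine contradiction from $\underline{v} < \overline{v}$ is where the technical heart of the argument lies.
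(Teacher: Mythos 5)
Your overall architecture matches the paper's: two extremal directions obtained by renormalization, then equality via the barrier property of the coupling. But there are two genuine gaps. First, in Part 1 you assert that $\liminf_n X_n/Y_n$ and $\limsup_n X_n/Y_n$ are almost surely deterministic ``by the tail triviality produced by mixing.'' Polynomial mixing of the environment does not give tail triviality for the walk's trajectory (the environment seen from the particle is precisely what is not under control here). The paper sidesteps this by \emph{defining} $v_\pm$ through the probabilities $p_H(v)=\PP(\exists y\in I_H,\,V_H^y\geqslant v)$ of box events (Definition \ref{S:d:limiting_speeds}) and proving the quantitative deviation bounds of Lemma \ref{S:p:estimates_on_p} directly; almost-sure statements only come out at the very end via Borel--Cantelli. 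Relatedly, the cut lines of Assumption \ref{S:a:cut_line} are not what controls ``going around from below'' (that is handled by the ballisticity events $D$ and $E$); cut lines are needed to reset histories so that the sub-events appearing in the renormalization step are genuinely of the form $A_{H_k,w}(v_k)$ with zero history, which is why $I_H(w)$ is a two-dimensional rectangle rather than an interval.

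The more serious gap is in Part 2. Coupling the walk once with ``a fresh walk whose asymptotic behavior is constrained by Part 1'' cannot produce a contradiction from $v_-<v_+$: the definition of $v_-$ only guarantees, along a subsequence of scales, a \emph{uniformly positive} (not high) probability that some nearby walk moves with direction close to $v_-$, and a single such barrier only delays the original walk by $O(\delta H)$ over one block of height $H$ --- a loss the walk could compensate for elsewhere while still achieving direction $v_+$ globally. This is exactly the difficulty the paper flags at the start of Section \ref{S:ss:part2}. Closing it requires the full trap/threat machinery: a trap occurs with uniformly positive probability (Lemma \ref{S:l:bound_trap}); a \emph{threat} allows entropy over $r$ candidate trap locations along a segment of slope $v_+$, and a two-stage renormalization in $r$ shows a point fails to be threatened with probability only $O(r^{-\alpha})$ (Proposition \ref{S:p:proba_threats}); a further renormalization (Proposition \ref{S:p:density}) shows that with high probability \emph{every} walk started in $I_{hL_k}$ passes near threatened points at least half of the time; only then does summing the per-block delays over a positive density of blocks contradict the definition of $v_+$. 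Without this density-of-delays argument your Step 2 does not close.
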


It is straightforward to check that this result is a consequence of the following result, which is less appealing, but whose formulation is closer to the methods used in \cite{BHT}.

\ncS{S:c:LLN}
\begin{theorem}\label{S:t:LLN_spatial}
There exists $\nu\in\RR$ such that 
$$\PP\text{-almost surely},\;\;\;\frac{ X_n}{ Y_n}\xrightarrow[n\to\infty]{} \nu,$$
where $\frac{ X_n}{ Y_n}$ is almost surely well-defined for $n$ large. Moreover we have a polynomial rate of convergence:
\begin{align*}
\forall \varepsilon>0,\;\exists\, \ucS{S:c:LLN}=\ucS{S:c:LLN}(\varepsilon)>0,\;\forall n\in\NN^*,\;\;\;\PP\left(\left\vert  X_n-\nu\, Y_n\right\vert\geqslant\varepsilon\, | Y_n| \right)\leqslant \ucS{S:c:LLN}\,n^{-\alpha/4}.\end{align*}
\end{theorem}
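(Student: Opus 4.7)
The plan is to adapt the multi-scale renormalization approach of \cite{BHT} to our two-dimensional static setting, using the coupling construction of Section~\ref{S:ss:coupling}, the mixing decoupling given later as Fact~\ref{S:p:mixing}, and the barrier property of Proposition~\ref{S:p:barrier_property}. Following the outline in the introduction, I would split the proof into two parts: first construct two deterministic slopes $\nu^- \leqslant \nu^+$ that capture the asymptotic behavior of $X_n/Y_n$, and then show that $\nu^- = \nu^+$. Throughout, Assumption~\ref{S:a:ballisticity} lets us replace quantities controlled in terms of $n$ by quantities in terms of $Y_n$ (and vice versa) up to an exponentially small correction, and Assumption~\ref{S:a:cut_line} provides recurring heights above which the walk sees a fresh history, which is crucial to exploit the vertical mixing.

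For Part~1, I would introduce a geometric sequence of scales $L_k$ and, for each candidate slope $\nu$, a "$k$-good slope at $x$" event asserting that with high quenched probability the walk started at $x$ hits the horizontal line of height $\pi_2(x) + L_k$ inside a window of width $\varepsilon L_k$ around $x + L_k(\nu, 1)$. Under Assumption~\ref{S:a:mixing_env}, Fact~\ref{S:p:mixing} decouples environment variables in boxes separated by a height $h$ up to $h^{-\alpha}$; combined with cut lines at intermediate heights, this lets us decompose an attempt at scale $L_{k+1}$ into a bounded number of essentially independent attempts at scale $L_k$. An induction over $k$, a Borel--Cantelli argument, and translation invariance (Assumption~\ref{S:a:translation_inv}) then produce deterministic limits
\[
\nu^- = \liminf_{n\to\infty}\frac{X_n}{Y_n}, \qquad \nu^+ = \limsup_{n\to\infty}\frac{X_n}{Y_n},
\]
together with a polynomial concentration rate at each scale.

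For Part~2, I would argue by contradiction: assume $\nu^- < \nu^+$. Using the concentration from Part~1, one couples two copies of the walk started from judiciously chosen points so that with high probability one copy realizes a final slope close to $\nu^+$ and the other close to $\nu^-$; their trajectories are then forced to cross. The barrier property (Proposition~\ref{S:p:barrier_property}) asserts that, under the shared-environment coupling, a crossing can occur only if one walk goes around the other from below, an excursion that requires the vertical coordinate to dip back down by a macroscopic amount. Assumption~\ref{S:a:ballisticity} rules out such excursions with more than exponentially small probability, which combined with the mixing decoupling yields the desired contradiction and hence $\nu := \nu^- = \nu^+$.

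The main obstacle is Part~2: in the one-dimensional dynamic setting of \cite{BHT}, monotonicity is automatic, while here it has to be enforced through the barrier property, which only rules out crossings on the event that neither walk goes around the other from below; quantifying and summing this failure over all scales is precisely where Assumptions~\ref{S:a:ballisticity} and~\ref{S:a:cut_line} are used jointly. The rate $n^{-\alpha/4}$ in the statement reflects the balance, at each step of the induction, between the mixing error $L_k^{-\alpha}$ and the number of events inspected (locations, slopes, and scales), and the conversion from the vertical scale $Y_n$ back to time $n$ via Assumption~\ref{S:a:ballisticity} costs only an exponentially small correction, preserving the polynomial rate.
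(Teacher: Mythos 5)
Your Part~2 contains a genuine gap. You propose to couple two copies of the walk from judiciously chosen points so that one realizes a slope near $\nu^+$ and the other near $\nu^-$, and then invoke the barrier property to argue the resulting crossing is exponentially unlikely. But the definitions of the two limiting directions only give: (i) for any $\varepsilon>0$ the event ``some start in $I_H$ has slope $\leqslant v_-+\varepsilon$'' has a positive \emph{liminf} of probability, not probability close to one; and (ii) slopes above $v_++\varepsilon$ are unlikely. They do \emph{not} produce two starting points from which the two extreme slopes are simultaneously realized with quantifiable joint probability; indeed it is perfectly consistent with $v_-<v_+$ that, in a single realization, every start from $I_H$ has slope near $v_-$, so your two walks need not cross. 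The barrier property converts a crossing into a delay only once one has, with high probability, a nearby walk acting as a barrier, and the probability of such a barrier existing is bounded below only by a uniform positive constant, not by $1-o(1)$. The paper bridges this gap with the trap/threat/density machinery of Section~\ref{S:ss:part2}: a ``trap'' at $w$ is a nearby walk of slope $\leqslant v_-+\delta$, whose probability is uniformly bounded below (Lemma~\ref{S:l:bound_trap}); a ``threat'' at $w$ is one of $r$ aligned candidates being a trap, and a two-stage renormalization shows it holds with probability $\geqslant 1-Cr^{-\alpha}$ (Proposition~\ref{S:p:proba_threats}); a further renormalization (Proposition~\ref{S:p:density}) shows that with high probability a walk meets threats a positive \emph{fraction} of the time along its path; finally Lemma~\ref{S:l:delay_traps} and Proposition~\ref{S:p:delay_threats}, via the barrier property and the ballisticity bound that prevents ``going around from below,'' turn each encountered threat into a quantified horizontal delay. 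Summing these delays over a macroscopic density of threats contradicts the definition of $v_+$. None of this quantitative accumulation is captured by your one-shot crossing picture.

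There is also a secondary issue in Part~1: you set $\nu^{\pm}=\liminf/\limsup_n X_n/Y_n$ and assert these are deterministic; under a merely polynomially mixing (non-i.i.d.) environment that requires a zero--one law which you have not supplied, and it is not obvious. The paper instead defines $v_\pm$ through the \emph{deterministic} sequences $p_H(v)=\PP(\exists\,y\in I_H,\,V_H^y\geqslant v)$ and $\tilde p_H(v)$, which makes the two slopes deterministic by construction, and derives concentration of the random direction around them only a posteriori (Lemma~\ref{S:p:estimates_on_p}). It also takes the supremum over all starting points in $I_H$, rather than a single one, because the cascading renormalization restarts the walk at uncontrolled points inside the scale-$H_k$ sub-boxes and the cut lines are located at random heights within a rectangle of height $H'$; this is why $I_H$ is a two-dimensional rectangle rather than a horizontal interval.
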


We will only focus on this theorem from now on. This is why, for the rest of the paper, what we (abusively) call the direction of a point $x\in\ZZ^2$ satisfying $\pi_2(x)\neq 0$ is simply $\pi_1(x)/\pi_2(x)$, while whenever $Y_n\neq 0$, $X_n/ Y_n$ will be called the direction of $Z$ at time $n$, and we will refer to $\nu$ as the limiting direction of $Z$. The link between $\nu$ and $\chi$ from Theorems \ref{S:t:direction} and \ref{S:t:LLN_spatial} is given by $$\chi=\frac{(\nu,1)}{\sqrt{\nu^2+1}}\;\;\;\text{and}\;\;\; \nu=\frac{\pi_1(\chi)}{\pi_2(\chi)}.$$

\begin{remark}\label{S:r:continuous_time}
    Theorem \ref{S:t:direction} also holds for the random walk $(Y_t)_{t\geqslant 0}$ in $\ZZ^2$, started at $o$, in the following continuous time framework. Instead of jumping at times in $\NN$, we set a Poisson process $(T_n)_{n\in\NN^*}$ of parameter $1$ in $\RR_+^*$ (independent of $\omega$) and we set $(Y_t)_{t\geqslant 0}$ to be the right-continuous process that jumps at each time given by this Poisson process; everything else is the same as in the discrete time framework. Then, $(Z_n=Y_{T_n})_{n\in\NN}$ (where $T_0=0$) satisfies Theorem \ref{S:t:direction}. From there we can check that $Y_t/|Y_t|$ converges to the same asymptotic direction as $Z_n/|Z_n|$ when $t$ goes to infinity.
\end{remark}

\subsection{Complete construction and coupling}\label{S:ss:coupling}

Inspired by \cite{BHT}, we want to define random walks starting from all possible starting points in $\ZZ^2$ and couple them in the following way: no matter its starting point, a random walk visiting a fixed site for the first time should jump to the same neighboring site. To define this properly, for each $s\in S$, we partition $[0,1]$ into $4$ intervals $(I_s^z)_{z\in\mathbb{S}}$ (recall the notations from Section \ref{S:ss:environment}) so that the length of $I_s^z$ is $p(s,z)$. For $u\in [0,1]$, we then set $$g(s,u)=\sum_{z\in\mathbb{S}} z\mathbf{1}_{I_s^z}(u).$$
This is a measurable function from $S\times [0,1]$ to $\mathbb{S}$ aimed at deciding the jump of a random walk if the state of the environment at its location is $s$, depending on a parameter $u$. Now, let $(U(x,i))_{x\in\ZZ^2,\,i\in\NN^*}$ be a family of independent uniform random variables in $[0,1]$, defined on a probability space $(\Omega',\mathcal{T}',\sP')$. The idea is that $U(x,i)$ will be the source of randomness used for the jump of a random walk visiting $x$ for the $i^{\text{th}}$ time. Let
\begin{align}\label{S:e:product_space}
\Omega_0=\Omega\times\Omega',\;\; \mathcal{T}_0=\mathcal{T}\otimes\mathcal{T}',\;\; \PP=\sP\otimes \sP'.
\end{align}

We call $\PP$ the annealed law. When $\omega\in\Omega$ is a fixed environment, $\PP^\omega=\delta_{\{\omega\}}\otimes \sP'$ is called the quenched law. Note that we have the relation $\PP(\cdot)=\int_\Omega \PP^{\omega}(\cdot)\,\dd \sP(\omega).$

In order to couple the random walks, we have to count the number of times that each random walk has visited each site. Therefore, for every starting point $y\in\ZZ^2$, we define simultaneously a random walk $Z^{y}$ and a counting process $N^{y}$, both as random variables on $(\Omega_0,\mathcal{T}_0)$, by the following:
\begin{equation}\label{S:e:jump_rule}
\begin{array}{l}
Z_0^{y}=y;\\
\forall x\in\ZZ^2,\;N_0^{y}(x)=0;\\
\forall n\in\NN,\;\left\{\begin{array}{l} \forall x\in\ZZ^2,\; N_{n+1}^{y}(x)=N_n^{y}(x)+\delta_{x,Z_{n}^{y}};\\
Z_{n+1}^{y}=Z_n^{y}+g\left(\omega(Z_n^{y}),\,U(Z_n^{y},N_{n+1}^{y}(Z_n^{y}))\right)\end{array}\right.
\end{array}
\end{equation}
where $\delta$ is the Kronecker symbol (note that by construction $N_{n+1}^y(Z_n^y)\geqslant 1$, so that the third equality makes sense). Let us rephrase what these formulas mean. If a random walk started at $y$ reaches $x$ for the first time at time $n$, then its jump at time $n$ (namely $Z^y_{n+1}-Z^y_n$) is determined by the state of the environment at $x$ and $U(x,1)$ (which does not depend on $n$ or $y$!). If it comes back to $x$ later in time, it will use $U(x,2)$ to choose where to jump, and so on.

Note that the process given by $Z=Z^o$ is indeed a RWRE in the sense of Section \ref{S:ss:random_walk}. Indeed, for every $s\in S$, $x\in\ZZ^2$ and $i\in\NN^*$, we have $$\sP'(g(s,U(x,i))=y)=\int_0^1 \mathbf{1}_{g(s,u)=y}\,\mathrm{d}u=\int_0^1 \mathbf{1}_{I_s^y}(u)\,\mathrm{d}u=p(s,y),$$
and the fact that $Z$ is a Markov chain under $\PP^\omega$ comes from the fact that our coupling ensures that the sequence of uniform variables used for the jumps of $Z$ is i.i.d. (for a detailed proof, see Proposition \ref{S:p:independence_uniforms}). From now on, when working with $y=o$, the superscript $y$ will be omitted.

We will use a more practical notation for the uniform variables that are read by the random walk.

\begin{notation}\label{S:n:uniform}
For $n\in\NN$, we set $U_n^{y}=U(Z_{n}^{y}, N_{n+1}^{y}(Z_{n}^{y})).$
\end{notation}
With this notation, the induction formula that defines our random walks in \ref{S:e:jump_rule} can be written in a more straightforward manner:
\begin{align}\label{S:e:jump_rule_straightforward}
\forall n\in\NN,\;\; Z_{n+1}^{y}=Z_n^{y}+g\left(\omega(Z_n^{y}),\,U_n^{y}\right).
\end{align}

In practice, we will use polynomial mixing to control the correlation of events that involve our random walks: we say that we are decoupling those events. This is actually not stronger than Assumption \ref{S:a:mixing_env}, because the uniform variables used for the jumps of our random walks are i.i.d., so two sets of uniform variables supported by disjoint boxes are independent. In practice, we will always use mixing to upper bound the probability of the intersection of two events of $\mathcal{T}_0$. We will say that an event $A\in\mathcal{T}_0$ is measurable with respect to a box $B\subseteq \RR^2$ if it is a measurable function of $\omega\vert_{B\cap\ZZ^2}$ and $\{U(x,i),\,i\in\NN^*,\,x\in B\cap\ZZ^2\}$.

\begin{fact}[Decoupling]\label{S:p:mixing}
Assume Assumption \ref{S:a:mixing_env} is satisfied. Let $h>0.$ Let $B,B'\subseteq\RR^2$ be two boxes satisfying
$$\max(\mathrm{diam}(B),\mathrm{diam}(B'))\leqslant 2(2\beta+1)h,\quad \max(h(B), h(B'))\leqslant 3h,\quad \mathrm{sep}(B,B')\geqslant h.$$
Let $A$ resp. $A'$ be events of $\mathcal{T}_0$ that are measurable with respect to $B$ resp. $B'$. We have \begin{align*}
\PP(A\cap A')\leqslant \PP(A)\PP(A')+ \ucS{S:c:mixing} h^{-\alpha}.
\end{align*}
\end{fact}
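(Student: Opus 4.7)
The plan is to reduce the statement to a decoupling statement about the environment only, and then to upgrade Definition \ref{d:mixing} from $\{0,1\}$-valued to $[0,1]$-valued functions via a layer-cake argument.

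More precisely, since $\mathrm{sep}(B,B') \geqslant h > 0$, the vertical projections of $B$ and $B'$ on $\RR$ are disjoint, and hence so are $B \cap \ZZ^2$ and $B' \cap \ZZ^2$. By the construction of Section \ref{S:ss:coupling}, the two families $(U(x,i))_{x \in B \cap \ZZ^2,\,i \in \NN^*}$ and $(U(x,i))_{x \in B' \cap \ZZ^2,\,i \in \NN^*}$ are therefore independent under $\sP'$. Since $A$ (resp. $A'$) is a measurable function of $\omega\vert_{B \cap \ZZ^2}$ (resp. $\omega\vert_{B' \cap \ZZ^2}$) and of the first (resp. second) family of uniforms, once one fixes $\omega$, the events $A$ and $A'$ become independent under $\PP^\omega$, so that $\PP^\omega(A \cap A') = \PP^\omega(A)\, \PP^\omega(A')$. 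Integrating in $\sP$ then gives
\begin{equation*}
\PP(A \cap A') = \E\!\left[\PP^\omega(A)\, \PP^\omega(A')\right].
\end{equation*}

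Set $f_1(\omega) := \PP^\omega(A)$ and $f_2(\omega) := \PP^\omega(A')$. By the dependence remark above, $f_1$ is $\sigma(\omega\vert_{B \cap \ZZ^2})$-measurable and $f_2$ is $\sigma(\omega\vert_{B' \cap \ZZ^2})$-measurable, and both take values in $[0,1]$. Since Assumption \ref{S:a:mixing_env} provides the mixing bound only for $\{0,1\}$-valued functions, I would use the layer-cake identity
\begin{equation*}
f_1(\omega)\, f_2(\omega) = \int_0^1\!\!\int_0^1 \mathbf{1}_{\{f_1(\omega) > s\}}\, \mathbf{1}_{\{f_2(\omega) > t\}}\, \mathrm{d}s\, \mathrm{d}t,
\end{equation*}
apply Definition \ref{d:mixing} to the two indicator functions at each level $(s,t) \in [0,1]^2$ (they are $\{0,1\}$-valued and measurable with respect to the required sub-$\sigma$-algebras), and swap expectation with the double integral via Fubini to obtain
\begin{align*}
\E[f_1 f_2] &\leqslant \int_0^1\!\!\int_0^1 \bigl(\E[\mathbf{1}_{\{f_1 > s\}}]\,\E[\mathbf{1}_{\{f_2 > t\}}] + c\, h^{-\alpha}\bigr)\, \mathrm{d}s\, \mathrm{d}t = \E[f_1]\,\E[f_2] + c\, h^{-\alpha}.
\end{align*}
Combined with the previous display, this yields the announced bound.

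No genuine obstacle arises: the argument rests on the two observations that the uniforms attached to disjoint spatial supports are independent by construction (so all correlation between $A$ and $A'$ has to transit through $\omega$), and that the polynomial mixing bound for indicators extends painlessly to bounded measurable functions via layer-cake. Both are standard moves once the annealed/quenched structure of $\PP = \sP \otimes \sP'$ is unpacked.
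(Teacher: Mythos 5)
Your proof is correct and spells out the argument the paper calls ``straightforward'': the remark just before Fact~\ref{S:p:mixing} is precisely your key observation that, since $\mathrm{sep}(B,B')\geqslant h>0$ forces $B\cap B'=\emptyset$, the two families of uniforms attached to $B\cap\ZZ^2$ and $B'\cap\ZZ^2$ are $\sP'$-independent, so all correlation between $A$ and $A'$ must pass through $\omega$. The quenched factorization $\PP^\omega(A\cap A')=\PP^\omega(A)\PP^\omega(A')$, the identification of $\PP^\omega(A)$ as a $\sigma(\omega\vert_{B\cap\ZZ^2})$-measurable $[0,1]$-valued function (via Fubini), and the layer-cake upgrade of Definition~\ref{d:mixing} from $\{0,1\}$-valued to $[0,1]$-valued functions are all sound, so there is no gap.
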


The proof of this fact is straightforward. See Figure \ref{S:f:mixing} for an illustration of this fact: events describing respectively the two sample paths drawn here can be decoupled using Fact \ref{S:p:mixing}.

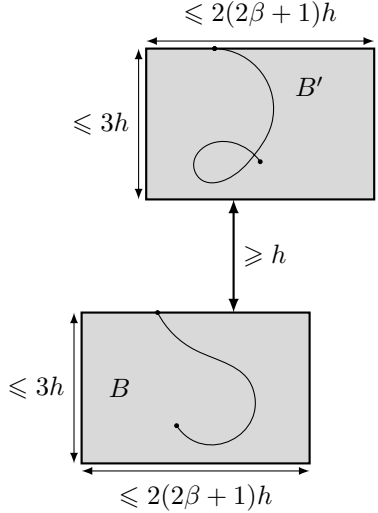
\begin{figure}[!h]
    \centering
    \begin{tikzpicture}[use Hobby shortcut,scale=0.5]
        \draw[fill, color=black!15!white] (-2.3,-1) rectangle (3.7,3);
        \draw[thick, right] (-2.3,-1) rectangle (3.7,3);
        \draw (2,2) node {$B'$};
        \draw[<->,thin,>=latex] (-2.5,-1) -- (-2.5,3);
        \draw[<->,thin,>=latex] (-2.3,3.2) -- (3.7,3.2);
        \draw[above] (0.7,3.3) node {$\leqslant 2(2\beta+1)h$};
        \draw[left] (-2.5,1) node {$\leqslant 3h$};
        \draw[fill, below left] (0.7,0) circle (.05);
        \draw (0.7,0) .. (0,0.5) .. (-1,-0.3) .. (0.4,0) .. (1,1) .. (-0.5,3);
        \draw[fill, above] (-0.5,3) circle (.05);
        \draw[fill, above] (-0.5,3) circle (.05);
        
        \draw[fill, color=black!15!white] (-4,-4) rectangle (2,-8);
        \draw[thick, below left] (-4,-4) rectangle (2,-8);
        \draw (-3,-6) node {$B$};
        \draw[fill, below left] (-1.5,-7) circle (.05);
        \draw (-1.5,-7).. (0.5,-6) .. (-1,-5) .. (-2,-4);
        \draw[fill] (-2,-4) circle (.05);
        \draw[<->,thin,>=latex] (-4,-8.2) -- (2,-8.2);
        \draw[<->,thin,>=latex] (-4.2,-8) -- (-4.2,-4);
        \draw[left] (-4.2,-6) node {$\leqslant 3h$};
        \draw[below] (-1,-8.3) node {$\leqslant 2(2\beta+1)h$};

        \draw[<->, thick,>=latex] (0,-4) -- (0,-1);
        \draw[right] (0,-2.5) node {$\geqslant h$};
    \end{tikzpicture}
    \caption{Illustration of the mixing property and Fact \ref{S:p:mixing}.}
    \label{S:f:mixing}
\end{figure}

\subsection{History}\label{S:ss:history}

Let $n_0\in\N^*$. Because of our coupling, the random walks given by $Z^{y}_{n_0+\cdot}$ and $Z^{Z_{n_0}^{y}}$ do not necessarily have the same sample paths. Indeed, the first one has a non-empty history, in the sense that between times $0$ and $n_0$, it has visited some positive number of sites and it has looked at $n_0$ random variables among the $\{U(x,i),\,x\in\ZZ^2,\,i\in\NN^*\}$, which it will not look at again in the future.

In order to address this issue, it will be convenient to define our random walks by adding an initial parameter alongside the starting point, which we will call the initial history of the random walk.

\begin{definition}\label{S:d:history}\hfill
\begin{itemize}[leftmargin=*]
    \item For each function $\Gamma:\ZZ^2\rightarrow \NN$, we define its support as $\mathrm{Supp}\,\Gamma=\{x\in\ZZ^2,\,\Gamma(x)>0\}.$ We let $$\mathcal{H}=\{\Gamma: \ZZ^2\rightarrow \NN \text{ such that $\mathrm{Supp}\,\Gamma$ is finite}\}.$$
    \item Let $y\in\ZZ^2$ and $n\in\NN$. The random variable $N_n^{y}$ defined in \eqref{S:e:jump_rule}, taking values in $\mathcal{H}$, is called the history of random walk $Z^{y}$ at time $n$.
    \item Let $y\in\ZZ^2$ and $\Gamma\in \mathcal{H}$. The random walk $Z^{y,\Gamma}$ starting at $y$ with history $\Gamma$ is defined in the same way as $Z^y$, except that in \eqref{S:e:jump_rule}, we replace $U(x,i)$ by $U(x,i+\Gamma(x))$. We also define a process $N^{y,\Gamma}$ in the same way, and we set $U_n^{y,\Gamma}$ as in Notation \ref{S:n:uniform}.
\end{itemize}
\end{definition}

Note that we could have restricted ourselves to an even smaller subset of $\NN^{\ZZ^2}$ for our set of histories. For instance, the support of a random walk's history has to be connected in $\ZZ^2$. Actually, we chose to define $\mathcal{H}$ as a simple countable subset of $\NN^{\ZZ^2}$, in order to sum over possible outcomes $\Gamma\in\mathcal{H}$ without worrying about uncountability.

Definition \ref{S:d:history} addresses the issue mentioned just before, for it ensures that for every $n_0\in\NN$, we have 
$$\forall n\in\NN,\;\;\;Z_{n_0+n}^{y}=Z_n^{Z_{n_0}^{y},N_{n_0}^{y}}.$$


Using Definition \ref{S:d:history}, we recover \eqref{S:e:jump_rule} by noticing that $Z_n^{y}=Z_n^{y,\mathbf{0}}.$ From now on, an omission of $\Gamma$ in any notation that is defined using a history superscript  $\Gamma$ will always mean that we are considering $\Gamma=\mathbf{0}$. Also, as mentioned before, the omission of the starting point superscript $y$ will mean that  $y=o$.

The rest of the paper is dedicated to showing Theorem \ref{S:t:LLN_spatial}. Its final proof using lemmas that will be shown later can be found at the end of Section \ref{S:ssss:global_lower_bound}.

\section{Key properties and tools}\label{S:ss:key_properties}
\subsection{2D simplification}\label{S:ss:2D_simplification}
As explained in the introduction, the idea of our proof is to adapt arguments used for one-dimensional dynamic environments in \cite{BHT}. The idea is to treat the vertical coordinate roughly as an equivalent of time. We will forget about the actual time variable and "hide" the time information by only considering hitting times of horizontal lines. In other words, we reduce the problem into a two-dimensional problem instead of a three-dimensional one.

\begin{definition}\label{S:d:tau}
Let $H\in\NN$, $y\in \ZZ^2$, $w\in\RR\times\ZZ$ and $\Gamma\in\mathcal{H}$.
The hitting time of height $\pi_2(w)+H$ by $Z^{y,\Gamma}$  is defined by
\begin{align*}
    \tau_{H,w}^{y,\Gamma}=\left\{\begin{array}{ll}\inf\{n\in\NN,\;Y_n^{y,\Gamma}=\pi_2(w)+H\}&\text{if $\pi_2(y)\leqslant \pi_2(w)+H$;}\\ 0&\text{otherwise,}\end{array}\right.
\end{align*}
where the infimum is in $\NN\cup\{+\infty\}.$ 
\end{definition}

In $\tau_{H,w}^{y,\Gamma}$, $w$ is a reference point whose horizontal coordinate does not play any role. It will be very useful in the future, because we will want to stop our random walks on a lattice centered at $w$, and we will have $\pi_2(y)$ slightly larger than $\pi_2(w)$ (see Definition \ref{S:d:box} and the proof of Lemma \ref{S:p:estimates_on_p}). Note that when $y=w$, $\tau_{H,y}^{y,\Gamma}$ is simply the time that $Z^{y,\Gamma}$ needs to go up $H$ times.

Notations can get really heavy. In an effort to make them more pleasant, we introduce an essential notation for the rest of the paper.
\begin{notation}
In the following notation, $A$ can be anything such that we defined $A^{y,\Gamma}$ with $y\in\ZZ^2$ and $\Gamma\in\mathcal{H}$. It can be an event (see Proposition \ref{S:p:independence}), a random variable (for example the times from Definition \ref{S:d:tau}, but also $Z$ itself, or $N$ as defined in Definition \ref{S:d:history}, or $V$ defined in Definition \ref{S:d:limiting_speeds}). For $H\in\NN$ and $w\in\RR\times\ZZ$, We set
\begin{align}\label{S:n:ThetaA}
\Theta_H^w A^{y,\Gamma}=A^{\mathcal{Z},\mathcal{N}},\text { where }\mathcal{Z}=Z^{y,\Gamma}_{\tau_{H,w}^{y,\Gamma}}\text{ and }\mathcal{N}=N^{y,\Gamma}_{\tau_{H,w}^{y,\Gamma}}.
\end{align} In particular, we have $\Theta_H^w Z^{y,\Gamma}_n=Z^{y,\Gamma}_{\tau^{y,\Gamma}_{H,w}+n}$ for every $n\in\NN$.
\end{notation}
We also adopt the following conventions:
\begin{itemize}[leftmargin=*,itemsep=0pt]
    \item Consistently with previous conventions, $\tau_H^{y,\Gamma}$ will mean $\tau_{H,o}^{y,\Gamma}$ (and not $\tau_{H,y}^{y,\Gamma}$!). $\tau_H$ will simply be $\tau^{o,\mathbf{0}}_{H,o}.$ In \eqref{S:n:ThetaA}, an omission of $w$ also means $w=o.$
    \item We write $Z^{y,\Gamma}_{\tau_{H,w}^{y,\Gamma}}$ without specifying what $Z^{y,\Gamma}_{\infty}$ means - any arbitrary value would work, since $\tau_{H,w}^{y,\Gamma}<\infty$ almost surely, using Assumption \ref{S:a:ballisticity}.
    \item We write $Z^{y,\Gamma}_{\tau_{H,w}}$ instead of $Z^{y,\Gamma}_{\tau_{H,w}^{y,\Gamma}}$ and $\Theta_H^w Z^{y,\Gamma}_{\tau_{H,w}}$ instead of $\Theta_H^w Z^{y,\Gamma}_{\tau_{H,w}^{y,\Gamma}}$. Mind that in these special cases, the omission of $y$ and $\Gamma$ does not mean $y=o$ and $\Gamma=\mathbf{0}$.
\end{itemize}

Note that Assumption \ref{S:a:ballisticity} can be translated in a simple way using $\tau_{H}$, as is stated in the following fact. Recall notation $\beta$ from the statement of Assumption \ref{S:a:ballisticity} and $\mathcal{A}$ from Notation \ref{S:n:A}.

\ncS{S:c:ball}
\begin{fact}\label{S:f:ball}
    There exists $\ucS{S:c:ball}>0$ such that for every $\omega\in\mathcal{A}$ and $H\in\NN$, we have
    $$\PP^\omega\left(\tau_{H}\geqslant \beta H\right)\leqslant \ucS{S:c:ball}^{-1} e^{-\ucS{S:c:ball} H}.$$
    The same estimate holds when replacing $\PP^\omega$ by $\PP$.
\end{fact}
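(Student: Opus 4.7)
The plan is to translate Assumption \ref{S:a:ballisticity}, which controls $Y_n$ for a given $n$, into a tail bound on $\tau_H$ by exploiting the fact that $Z$ moves by unit steps in $\mathbb{S}$, so it cannot skip over a horizontal line. Concretely, since each jump changes $\pi_2$ by at most $1$, we have the path-level equivalence
\begin{align*}
\{\tau_H > n\}=\{Y_k<H\text{ for every }k\leqslant n\},
\end{align*}
and in particular $\{\tau_H\geqslant \beta H\}\subseteq \{Y_m\leqslant H-1\}$ whenever $m\in\NN$ satisfies $m\leqslant \beta H-1$ (equivalently $m\leqslant \lceil \beta H\rceil-1$).

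I would then choose $m=\lfloor \beta H\rfloor$ (noting that without loss of generality $\beta\geqslant 1$, since $Y_n\leqslant n$ makes Assumption \ref{S:a:ballisticity} vacuous otherwise). Then
\begin{align*}
\beta^{-1}m\geqslant \beta^{-1}(\beta H-1)= H-\beta^{-1}\geqslant H-1,
\end{align*}
so the integer bound $Y_m\leqslant H-1$ implies $Y_m\leqslant \beta^{-1}m$. Combining the inclusions,
\begin{align*}
\{\tau_H\geqslant \beta H\}\subseteq \{Y_m\leqslant \beta^{-1}m\}.
\end{align*}
Applying Assumption \ref{S:a:ballisticity} for $\omega\in\mathcal{A}$ gives
\begin{align*}
\PP^\omega(\tau_H\geqslant \beta H)\leqslant \ucS{S:c:ballisticity}^{-1}e^{-\ucS{S:c:ballisticity} m}\leqslant \ucS{S:c:ballisticity}^{-1}e^{\ucS{S:c:ballisticity}}\,e^{-\ucS{S:c:ballisticity}\beta H},
\end{align*}
which is of the desired form after adjusting the constant $\ucS{S:c:ball}$.

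The annealed bound follows immediately, since $\mathcal{A}$ has full $\sP$-probability and the quenched estimate is deterministic on $\mathcal{A}$: integrating over $\omega$ yields the same bound for $\PP$. There is no genuine obstacle here; the only mildly delicate point is handling integer rounding so that the inequality $H-1\leqslant \beta^{-1}m$ survives when $\beta H$ is not an integer, which is why the step requiring $\beta\geqslant 1$ must be addressed explicitly (otherwise by trivial monotonicity, we could replace $\beta$ by any larger constant and redo the same argument).
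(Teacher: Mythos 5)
Your argument is essentially the paper's: both reduce $\{\tau_H\geqslant \beta H\}$ to an event about $Y_m$ at a deterministic time $m\approx\beta H$ and then apply Assumption \ref{S:a:ballisticity} (the paper uses $m=\lceil\beta H\rceil$ together with the one-step bound $Y_{\lceil\beta H\rceil}\leqslant Y_{\lceil\beta H\rceil-1}+1\leqslant H$ and $H\leqslant\beta^{-1}\lceil\beta H\rceil$, which avoids needing $\beta\geqslant 1$). One minor rounding slip: when $\beta H\in\NN$ your choice $m=\lfloor\beta H\rfloor=\beta H$ violates the condition $m\leqslant\lceil\beta H\rceil-1$ and the inclusion $\{\tau_H\geqslant\beta H\}\subseteq\{Y_m\leqslant H-1\}$ can fail (take $\tau_H=\beta H$, so $Y_m=H$); this is fixed by taking $m=\lceil\beta H\rceil-1$, or by settling for $Y_m\leqslant H$ as in the paper.
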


\begin{proof}
    For every $\omega\in\mathcal{A}$ and $H\in\NN$, we have
    \begin{align*}
        \PP^\omega(\tau_H\geqslant \beta H)
        &\leqslant \PP^\omega(Y_{\lceil\beta H\rceil}\leqslant H)\\
    &\leqslant\PP^\omega(Y_{\lceil\beta H\rceil}\leqslant \beta^{-1} \lceil\beta H\rceil)\\
    &\leqslant \ucS{S:c:ballisticity}^{-1}e^{-\ucS{S:c:ballisticity} \lceil\beta H\rceil}.
    \end{align*}
    The result follows by choosing $\ucS{S:c:ball}$ properly (depending on $\ucS{S:c:ballisticity}$ and $\beta$) and integrating the quenched estimate to get the annealed one.
\end{proof}

In order to show Theorem \ref{S:t:LLN_spatial}, it will actually be sufficient to show an almost sure asymptotic estimate for $Z$ along the subsequence given by $(\tau_H)_{H\in\NN}$. This is what the following lemma is about.

\ncS{S:c:deviation}
\begin{lemma}\label{S:l:LLN_spatial_subsequence}
There exists $\nu\in\RR$ such that 
$$\PP\text{-almost surely},\;\;\;\frac{X_{\tau_H}}{H}\xrightarrow[H\to\infty]{} \nu.$$
Moreover we have a polynomial rate of convergence:
\begin{align*}
\forall \varepsilon>0,\;\exists\, \ucS{S:c:LLN}=\ucS{S:c:LLN}(\varepsilon)>0,\;\forall H\in\NN^*,\;\;\;\PP\left(\left\vert \frac{X_{\tau_H}}{H}-\nu\right\vert\geqslant\varepsilon\right)\leqslant \ucS{S:c:deviation}\,H^{-\alpha/4}.\end{align*}
\end{lemma}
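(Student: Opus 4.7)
The plan is to follow the two-part strategy announced after Theorem \ref{S:t:LLN_spatial}. I would first construct an upper and a lower limiting direction $\nu^+$ and $\nu^-$ that sandwich $X_{\tau_H}/H$ asymptotically with polynomial accuracy, and then show in a second step that $\nu^+=\nu^-$, so that the common value $\nu$ is the desired almost sure limit. The natural candidate definitions are $\nu^\pm$ given as the essential supremum, resp.\ infimum, of $\limsup_{H\to\infty}\pi_1(Z^{y,\Gamma}_{\tau_{H,y}^{y,\Gamma}})/H$ over stationary initial configurations $(y,\Gamma)$; translation invariance (Assumption \ref{S:a:translation_inv}) together with the coupling of Section \ref{S:ss:coupling} makes $\nu^+$ and $\nu^-$ deterministic.

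The core of the argument is a multi-scale renormalization in the spirit of \cite{BHT}. I would fix a geometric sequence of heights $H_0<H_1<\dots$ with $H_{k+1}\asymp H_k^\rho$ for a suitable $\rho>1$, and at level $k$ define the bad event that, for some initial configuration supported in a reference box of height $H_k$, the walk deviates from $\nu^+$ (or $\nu^-$) by more than $\varepsilon_k$ when it first reaches height $H_k$. The recursion step uses Fact \ref{S:p:mixing} to bound the probability of the bad event at scale $H_{k+1}$ by the probability that two bad sub-events at scale $H_k$ occur in boxes with vertical separation of order $H_k$, plus a decoupling error of order $H_k^{-\alpha}$. The underlying geometric point is that if the walk deviated only modestly on each of two well-separated vertical sub-scales, it could not accumulate a deviation $\varepsilon_{k+1}$ at the top of a scale-$H_{k+1}$ slab; so a bad event at scale $k+1$ really does force two bad events at scale $k$. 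Solving the resulting recursion after a careful tuning of $\rho$, $\varepsilon_k$, and the number of admissible sub-boxes produces the polynomial tail $H^{-\alpha/4}$, which is the standard output of this type of scheme when balancing the pair-counting against the polynomial mixing.

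The hard part, and the main obstacle, is showing $\nu^+=\nu^-$. Assuming $\nu^-<\nu^+$ for contradiction, one would have to exhibit, with uniform positive probability at every scale, two coupled walks in the same environment whose directions at height $H$ remain macroscopically separated. Proposition \ref{S:p:barrier_property} prevents one walk from freely crossing another from the left in the coupling of Section \ref{S:ss:coupling}: an overtaking can only happen by looping below. Combined with Assumption \ref{S:a:cut_line}, which provides a horizontal cut line above which histories become irrelevant, this rules out the sustained coexistence of two distinct asymptotic speeds. A second renormalization, again powered by Fact \ref{S:p:mixing}, propagates this small-scale incompatibility across all scales and forces $\nu^+=\nu^-$. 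Once the annealed polynomial tail bound is established, Borel--Cantelli along a polynomial subsequence of heights, together with the ballisticity estimate of Fact \ref{S:f:ball} to interpolate between consecutive values of $\tau_H$, yields the almost sure convergence. The persistent technical difficulty throughout is that our 2D static setting lacks the clean space-time monotonicity exploited in \cite{BHT}, and it is precisely the barrier property and the cut line assumption that are designed to substitute for it.
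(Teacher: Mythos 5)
Your outline follows the same two-part architecture as the paper (introduce limiting directions $v_\pm$, prove deviation bounds by multi-scale renormalization, then close the gap $v_-=v_+$ via the barrier property), but there is a genuine gap at the very start: your definition of $\nu^+$ and $\nu^-$. You define them as the essential supremum, resp. infimum, of $\limsup_{H\to\infty}X_{\tau_H}/H$ over initial configurations, and assert that translation invariance together with the coupling makes them deterministic. That is precisely the kind of $0$--$1$ law that is unavailable under the paper's hypotheses: Assumption~\ref{S:a:translation_inv} gives stationarity of $\omega$ but not ergodicity, and no regeneration or uniform-mixing structure is assumed that would promote the tail $\sigma$-algebra to triviality. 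In fact, avoiding exactly this obstruction is what motivates the paper's alternative, purely probabilistic definition: $v_+$ (resp. $v_-$) is defined through the \emph{deterministic} quantities $p_H(v)=\PP(A_H(v))$ and $\tilde p_H(v)$, as the threshold at which $\liminf_H p_H(v)$ drops to $0$. No almost-sure limit is ever used to define the candidate direction; its existence is the output of the argument, not the input.

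There is also a subtler issue in the renormalization step as you describe it: going from scale $H_k$ to $H_{k+1}$, the paper needs to find \emph{three} sub-slabs where a bad deviation occurs (see the cascading inclusion~\eqref{S:e:cascade}), not just two, because two of those badly-behaving sub-slabs must then be selected with $2H_k$ vertical separation so that Fact~\ref{S:p:mixing} applies, while the third absorbs the entropy loss. More importantly, the bad sub-events at scale $H_k$ as they naturally appear carry nonzero histories inherited from the lower slabs, so they are not of the form $A_{H_k,w}(\cdot)$ and cannot be iterated directly; removing the histories requires the cut-line Assumption~\ref{S:a:cut_line} together with the two-dimensional reference grid $\CCC_k$ and the rectangular (rather than segment-shaped) starting sets $I_H(w)$, which is exactly the reason the paper enlarges $I_H$ to have height $H'\asymp H^{1/2}$. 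Your proposal places the cut-line machinery only in the second part (equality of the directions), but it is already indispensable in the deviation bounds. Finally, your sketch of $\nu^+=\nu^-$ gestures at the barrier property but does not contain the quantitative mechanism: the paper builds a hierarchy of ``traps'' (Definition~\ref{S:d:traps}) and ``threats'' (Definition~\ref{S:d:threats}), shows via a second renormalization that threats have density close to one along the walk's trajectory (Proposition~\ref{S:p:density}), and combines this with the barrier property to force a delay incompatible with the definition of $v_+$. That delicate bookkeeping is what makes the contradiction quantitative rather than heuristic, and it is entirely missing from your argument.
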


The proof of Lemma \ref{S:l:LLN_spatial_subsequence} is the purpose of Sections \ref{S:ss:part1} and \ref{S:ss:part2}. The fact that Lemma \ref{S:l:LLN_spatial_subsequence} implies Theorem \ref{S:t:LLN_spatial} is shown at the end of Section \ref{S:ssss:global_lower_bound}.

\subsection{Markov-type properties}\label{S:ss:invariance_properties}

Our coupling makes the definition of our random walks more complex than they usually are. Yet, as we already said, a single random walk will behave just as in the usual framework. In particular, our random walks are Markov chains under the quenched law. This is actually a straightforward consequence of the following proposition. Recall Notation \ref{S:n:uniform}.

\begin{proposition}\label{S:p:independence_uniforms}
Let $y\in\ZZ^2$ and $\Gamma\in\mathcal{H}$. Under either $\PP$ or $\PP^\omega$, the $(U_n^{y,\Gamma})_{n\in \NN^*}$ are independent uniform random variables in $[0,1]$. 
\end{proposition}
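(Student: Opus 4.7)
The plan is to establish that, for every environment $\omega$ and every $n \in \NN^*$, the tuple $(U_0^{y,\Gamma}, \ldots, U_{n-1}^{y,\Gamma})$ has the uniform distribution on $[0,1]^n$ under $\PP^\omega$; the annealed statement then follows by integrating against $\sP$, since the quenched conclusion does not depend on $\omega$.

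The combinatorial heart of the argument is the pathwise distinctness of the reading indices
\[
W_i := \bigl(Z_i^{y,\Gamma},\, N_{i+1}^{y,\Gamma}(Z_i^{y,\Gamma}) + \Gamma(Z_i^{y,\Gamma})\bigr) \in \ZZ^2 \times \NN^*,
\]
which by construction satisfy $U_i^{y,\Gamma} = U(W_i)$. Indeed, if $k < i$ with $Z_k^{y,\Gamma} = Z_i^{y,\Gamma} = x$, then the counter $N^{y,\Gamma}(x)$ was incremented at step $k+1$, hence $N_{i+1}^{y,\Gamma}(x) \geqslant N_{k+1}^{y,\Gamma}(x) + 1$, forcing $W_k \neq W_i$. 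So the sequence $(U_i^{y,\Gamma})$ consists of values drawn from the i.i.d.\ primitive family $(U(x,j))_{x,j}$ by a measurable, history-dependent rule that never picks the same index twice.

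To turn this into independence, I would decompose over trajectories: for bounded measurable $f_0, \ldots, f_{n-1}: [0,1] \to \RR$, enumerate sequences $\gamma = (z_0, \ldots, z_n)$ with $z_0 = y$ and $z_{i+1} - z_i \in \mathbb{S}$. Each $\gamma$ determines a deterministic list of reading indices $(w_i^\gamma)_{0 \leqslant i \leqslant n-1}$, pairwise distinct by the observation above, and the event $\{Z_j^{y,\Gamma} = z_j,\, 0 \leqslant j \leqslant n\}$ rewrites as $\bigcap_{i=0}^{n-1}\{U(w_i^\gamma) \in I_{\omega(z_i)}^{z_{i+1} - z_i}\}$. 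Using independence of the primitive uniforms, this produces
\[
\E^\omega\!\left[\prod_{i=0}^{n-1} f_i(U_i^{y,\Gamma})\right] = \sum_\gamma \prod_{i=0}^{n-1} \int_{I_{\omega(z_i)}^{z_{i+1}-z_i}} f_i(u) \, \mathrm{d}u,
\]
and a telescoping summation — first over $z_n$, using that the intervals $\{I_{\omega(z_{n-1})}^{\varepsilon}\}_{\varepsilon \in \mathbb{S}}$ partition $[0,1]$ to obtain $\sum_{\varepsilon}\int_{I_{\omega(z_{n-1})}^{\varepsilon}} f_{n-1} = \int_0^1 f_{n-1}$, and then iteratively over $z_{n-1}, \ldots, z_1$ — collapses the right-hand side to $\prod_{i=0}^{n-1} \int_0^1 f_i(u)\,\mathrm{d}u$, the desired factorization.

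The only genuine obstacle is setting up this trajectory decomposition cleanly: one must verify that the event $\{(Z_0^{y,\Gamma},\ldots,Z_n^{y,\Gamma}) = \gamma\}$ truly factors as the stated independent constraints on the specific primitive uniforms $U(w_i^\gamma)$, which rests crucially on the pathwise distinctness of the $w_i^\gamma$'s. Everything else is routine bookkeeping.
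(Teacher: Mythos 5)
Your proposal is correct and follows essentially the same route as the paper: both decompose over trajectories and exploit the fact that the visit counters force the reading indices $\bigl(Z_i^{y,\Gamma},\,N_{i+1}^{y,\Gamma}(Z_i^{y,\Gamma})+\Gamma(Z_i^{y,\Gamma})\bigr)$ to be pairwise distinct, so the uniforms consumed are distinct members of the i.i.d.\ primitive family $(U(x,j))$. The paper organizes this as an induction on $k$ that isolates only the last uniform and pulls out its integral, whereas you factorize each trajectory term completely and collapse the sum by telescoping over $z_n,\ldots,z_1$; this is equivalent bookkeeping for the same argument.
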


\begin{proof}
    Let $y\in\ZZ^2$ and $\Gamma\in\mathcal{H}$. We want to show that for every $k\in\NN^*$ and $f_1,\ldots,f_k$ measurable non-negative functions on $[0,1]$, we have
\begin{align}\label{S:e:induction_form}
\EE\left[f_1(U_1^{y,\Gamma})\cdots f_k(U_k^{y,\Gamma})\right]=\int_0^1 f_1(u)\,\dd u\;\cdots\;\int_0^1 f_k(u)\,\dd u.
\end{align}
We show this by induction on $k$. The case $k=1$ simply follows from the fact that $U_1^{y,\Gamma}=U(y,\Gamma(y)+1)$. Assume \eqref{S:e:induction_form} is true for a fixed $k\in\NN^*$. Let $f_1,\ldots,f_{k+1}$ be measurable non-negative functions on $[0,1]$. Set $n_0=1$ and $x_0=y$. We have
\begin{align*}
    &\EE\left[f_1(U_1^{y,\Gamma})\cdots f_{k+1}(U_{k+1}^{y,\Gamma})\right]\\
    &=\sum_{x_1,\ldots,x_{k}\in\ZZ^2 \atop n_1,\ldots,n_{k}\in \NN^*} \EE\left[f_1(U(x_0,\Gamma(x_0)+n_0))\cdots f_{k+1}(U(x_k,\Gamma(x_k)+n_{k}))\,\prod_{j=1}^{k}\mathbf{1}_{Z^{y,\Gamma}_j=x_j}\,\mathbf{1}_{N_{j+1}^{y,\Gamma}(x_j)=n_j} \right].\nonumber
\end{align*}
Now in each term of this sum, the variable $f_{k+1}(U(x_k,\Gamma(x_k)+n_{k}))$ is independent from all the other variables that appear, for those are all measurable with respect to $\omega$ and $$\{U(x_0,\Gamma(x_0)+n_0),\ldots,U(x_{k-1},\Gamma(x_{k-1})+n_{k-1})\},$$
where, for every $j\in \llbracket 0,k-1\rrbracket$, either $x_k\neq x_j$ or $\Gamma(x_k)+n_k\neq \Gamma(x_j)+n_j.$
Now for any $x_k\in\ZZ^2$ and $n_k\in\NN^*$, $\EE[f_{k+1}(U(x_k,\Gamma(x_k)+n_k))]=\int_0^1 f_{k+1}(u)\,\dd u.$ Therefore
\begin{align*}
    \EE\left[f_1(U_1^{y,\Gamma})\cdots f_{k+1}(U_{k+1}^{y,\Gamma})\right]
    =\EE\left[f_1(U_1^{y,\Gamma})\cdots f_{k}(U_{k}^{y,\Gamma})\right]\;\int_0^1 f_{k+1}(u)\,\dd u,
\end{align*}
and then using the induction assumption allows us to conclude. The exact same arguments work when replacing $\PP$ by $\PP^\omega.$
\end{proof}

Recall the definition of the translated environment $\theta^y\omega$ from the beginning of Section \ref{S:ss:environment}.
\begin{corollary}\label{S:c:invariance}
Let $y\in\ZZ^2$ and $\Gamma\in\mathcal{H}$.
\begin{enumerate}
    \item The law under $\PP$ of $Z^{y,\Gamma}-y$ does not depend on $y$ and $\Gamma$, in particular it is the law of $Z$.
    \item For every environment $\omega$, the law under $\PP^\omega$ of $Z^{y,\Gamma}-y$ is the law under $\theta^y\omega$ of $Z$.
\end{enumerate}
\end{corollary}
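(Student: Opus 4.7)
The plan is to prove the quenched statement (2) first, and then derive the annealed statement (1) by integrating against $\sP$ and invoking Assumption~\ref{S:a:translation_inv}. Both parts rely fundamentally on Proposition~\ref{S:p:independence_uniforms}, which ensures that the variables driving $Z^{y,\Gamma}$ are genuinely i.i.d.\ uniform.

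For (2), I would fix $\omega$, $y$ and $\Gamma$, and introduce the translated trajectory $\tilde{Z}_n = Z_n^{y,\Gamma} - y$. The recursion \eqref{S:e:jump_rule_straightforward} together with the identity $\omega(y+x) = (\theta^y\omega)(x)$ gives immediately
\begin{align*}
\tilde{Z}_0 = 0, \qquad \tilde{Z}_{n+1} = \tilde{Z}_n + g\bigl((\theta^y\omega)(\tilde{Z}_n),\, U_n^{y,\Gamma}\bigr).
\end{align*}
Under $\PP^\omega$, Proposition~\ref{S:p:independence_uniforms} tells us that $(U_n^{y,\Gamma})_{n\geqslant 1}$ is i.i.d.\ uniform on $[0,1]$. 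Applied to the starting point $o$ and empty history with environment $\theta^y\omega$, the same proposition says that under $\PP^{\theta^y\omega}$ the canonical walk $Z$ obeys the same recursion, driven by a sequence of i.i.d.\ uniforms. A routine induction on $n$ then shows that two processes obtained from the same measurable map $(x,u)\mapsto x + g((\theta^y\omega)(x),u)$ iterated along i.i.d.\ uniform driving noise have identical finite-dimensional distributions, which gives (2).

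For (1), I would integrate over $\omega$: for any bounded measurable functional $F$ on trajectories,
\begin{align*}
\EE[F(Z^{y,\Gamma}-y)] = \int_\Omega \EE^\omega[F(Z^{y,\Gamma}-y)]\,\dd\sP(\omega) = \int_\Omega \EE^{\theta^y\omega}[F(Z)]\,\dd\sP(\omega),
\end{align*}
where the last equality uses (2). Assumption~\ref{S:a:translation_inv} says that $\theta^y$ preserves $\sP$, so this last integral equals $\int_\Omega \EE^\omega[F(Z)]\,\dd\sP(\omega) = \EE[F(Z)]$. Since $F$, $y$ and $\Gamma$ were arbitrary, $Z^{y,\Gamma}-y$ has the same annealed law as $Z$, proving (1). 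I expect no real obstacle here: the only mildly technical point is the induction inside (2) establishing that equality in law of the driving noises propagates to equality in law of the resulting walks, which amounts to bookkeeping given the transparent form of the recursion.
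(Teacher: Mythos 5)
Your proposal is correct and follows exactly the route the paper intends: its one-line proof cites precisely the three ingredients you use — the recursion \eqref{S:e:jump_rule_straightforward} combined with $\omega(y+x)=(\theta^y\omega)(x)$, Proposition~\ref{S:p:independence_uniforms} for the i.i.d.\ uniform driving noise, and Assumption~\ref{S:a:translation_inv} to pass from the quenched to the annealed statement. You have simply written out the details the paper leaves implicit.
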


\begin{proof}
This is a consequence of Assumption \ref{S:a:translation_inv}, \eqref{S:e:jump_rule_straightforward} and Proposition~\ref{S:p:independence_uniforms}.
\end{proof}

Note that for every $y\in\ZZ^2$, $\omega$ is in $\mathcal{A}$ if and only if $\theta^y \omega$ is in $\mathcal{A}$. Therefore, using the second point of the corollary, in order to show a quenched estimate on $Z^{y,\Gamma}-y$ valid for any $y\in\ZZ^2$, $\Gamma\in\mathcal{H}$ and $\omega\in\mathcal{A}$, it is sufficient to do it for $Z$. We will use this extensively in the rest of the paper. As for the first point, it expresses that averaging on all the possibles outcomes of the environment (that is, working under the annealed law) restores the translation-invariance of the law of the random walks.

Oftentimes, we will have to bound the probability of an event describing a random walk whose initial conditions (starting point and history) are random variables. To do this, we will need Markov-type properties.

As we said before, for any $y\in\ZZ^2$ and $\Gamma\in\mathcal{H}$, $Z^{y,\Gamma}$ is a Markov chain under the quenched law. Nonetheless, in general this Markov chain is obviously not time-homogeneous, since its transition matrices depend on the location of the random walk at each step. As a result, if $t\in\NN$, $\{Z_n,\,n\leqslant t\}$ is not necessarily independent from $\{Z_n-Z_t,\,n>t\}$ (neither under $\PP$ nor under $\PP^\omega$). This is an obstacle to studying the probability of an event describing a random walk whose initial conditions are given by its past. Nonetheless, we do have the following proposition, which will be very useful in the future. Recall \eqref{S:n:ThetaA} and Notation \ref{S:n:A}.

\begin{proposition}\label{S:p:independence}
Let $y_0\in\ZZ^2$, $\Gamma_0\in\mathcal{H}$, $H\in\NN$ and $w\in\RR\times\ZZ$. For $y\in\ZZ^2$ and $\Gamma\in\mathcal{H}$, let $A^{y,\Gamma}$ be an event that is measurable with respect to $\omega$ and $(U_n^{y,\Gamma})_{n\in\NN^*}$. Then, for every $\omega\in\mathcal{A}$,
\begin{align}\label{S:markov_type1}
&\PP^\omega\left(\Theta_H^w A^{y_0,\Gamma_0}\right)\leqslant \underset{y,\Gamma}{\sup}\;\PP^\omega(A^{y,\Gamma});\\\label{S:markov_type}
&\PP\left(\Theta_H^w A^{y_0,\Gamma_0}\right)\leqslant \underset{\omega\in\mathcal{A}}{\sup}\; \underset{y,\Gamma}{\sup}\;\PP^\omega(A^{y,\Gamma}).
\end{align}
In particular, if we have an upper bound of $\PP^\omega(A^{y,\Gamma})$ that is uniform in $\omega\in\mathcal{A}$, $y\in\ZZ^2$ and $\Gamma\in\mathcal{H}$, it is also an upper bound of $\PP(\Theta_H^w A^{y_0,\Gamma_0}).$
\end{proposition}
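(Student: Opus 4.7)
The plan is to partition the probability according to the values of $(\mathcal{Z},\mathcal{N}):=(Z^{y_0,\Gamma_0}_{\tau},N^{y_0,\Gamma_0}_{\tau})$ with $\tau=\tau^{y_0,\Gamma_0}_{H,w}$, and then exploit that on each slice $\{\mathcal{Z}=y,\,\mathcal{N}=\Gamma\}$ the event $A^{y,\Gamma}$ depends only on uniforms that the past walk has not yet consulted. By Fact \ref{S:f:ball}, $\tau<\infty$ $\PP^\omega$-almost surely for every $\omega\in\mathcal{A}$, so $(\mathcal{Z},\mathcal{N})$ takes values in the countable set $\ZZ^2\times\mathcal{H}$ almost surely, and
$$\PP^\omega(\Theta_H^w A^{y_0,\Gamma_0})=\sum_{(y,\Gamma)\in\ZZ^2\times\mathcal{H}}\PP^\omega\bigl(A^{y,\Gamma}\cap\{\mathcal{Z}=y,\,\mathcal{N}=\Gamma\}\bigr).$$

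The central step is to establish the $\PP^\omega$-independence of $A^{y,\Gamma}$ and $\{\mathcal{Z}=y,\,\mathcal{N}=\Gamma\}$ for each fixed $(y,\Gamma)$. By Definition \ref{S:d:history} and Notation \ref{S:n:uniform}, the uniforms read by $Z^{y,\Gamma}$ at each site $x$ are precisely those of the form $U(x,j)$ with $j$ strictly larger than the offset at $x$ dictated by $\Gamma$, whereas on $\{\mathcal{N}=\Gamma\}$ the uniforms consulted by $Z^{y_0,\Gamma_0}$ during $[0,\tau]$ lie in the complementary range of indices. These two subsets of $\ZZ^2\times\NN^*$ are disjoint, and since the family $(U(x,i))_{x,i}$ is i.i.d.\ under $\sP'$ and independent of $\omega$, one obtains the factorization
$$\PP^\omega\bigl(A^{y,\Gamma}\cap\{\mathcal{Z}=y,\,\mathcal{N}=\Gamma\}\bigr)=\PP^\omega(A^{y,\Gamma})\cdot\PP^\omega(\mathcal{Z}=y,\,\mathcal{N}=\Gamma).$$
Plugging this into the decomposition, bounding $\PP^\omega(A^{y,\Gamma})$ by its supremum over $(y,\Gamma)$, and using that $\sum_{(y,\Gamma)}\PP^\omega(\mathcal{Z}=y,\,\mathcal{N}=\Gamma)=1$ yields \eqref{S:markov_type1}.

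For the annealed inequality \eqref{S:markov_type}, I integrate the quenched bound against $\sP$ restricted to $\mathcal{A}$, which has full measure, and bound the integrand uniformly by $\sup_{\omega'\in\mathcal{A}}\sup_{(y,\Gamma)}\PP^{\omega'}(A^{y,\Gamma})$. The main technical obstacle is the disjointness-of-indices argument in the second step: one must keep careful track of which pairs $(x,i)$ index uniforms queried by $Z^{y_0,\Gamma_0}$ up to time $\tau$, and verify on $\{\mathcal{N}=\Gamma\}$ that this set is precisely complementary to the set of indices queried by $A^{y,\Gamma}$. This is ultimately a combinatorial check that rests on the consistency property $\Theta_H^w Z^{y,\Gamma}_n=Z^{y,\Gamma}_{\tau+n}$ stated just after \eqref{S:n:ThetaA}, together with the definition of $N^{y,\Gamma}$.
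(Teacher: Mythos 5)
Your proposal is correct and follows essentially the same route as the paper: decompose over the values of $(\mathcal{Z},\mathcal{N})$, observe that $A^{y,\Gamma}$ depends only on uniforms $U(x,i)$ with $i>\Gamma(x)$ while the event $\{\mathcal{Z}=y,\,\mathcal{N}=\Gamma\}$ depends only on those with $i\leqslant\Gamma(x)$, factorize by Proposition \ref{S:p:independence_uniforms}, bound by the supremum, and integrate over $\mathcal{A}$ for the annealed version. The ``combinatorial check'' you defer is exactly what the paper's proof spells out, by writing $\{Z_t=y,\,N_t=\Gamma,\,\tau_H=t\}$ as a union over sample paths and discarding the terms with $n_j>\Gamma(y_j)$ using the monotonicity of $s\mapsto N_s(x)$.
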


Mind that \textit{a priori} we may not replace $\sup_{y,\Gamma}\PP^\omega(A^{y,\Gamma})$ by $\PP^\omega(A^{o,\mathbf{0}})$. Indeed, although in the quenched setting $A^{y,\Gamma}$ is a measurable function of $(U_n^{y,\Gamma})_n$, whose law does not depend on $(y,\Gamma)$, the function itself may depend on $y$ and $\Gamma.$ However we will usually not use Proposition \ref{S:p:independence} in that case and so we will usually simply use a supremum over $\omega$: see for instance the proof of \eqref{S:e:equation_new}.

Note that the right hand-side in \eqref{S:markov_type} involves the quenched law $\PP^\omega$. This is why Assumptions \ref{S:a:ballisticity} and \ref{S:a:cut_line} have to be quenched and not annealed estimates, and why in the rest of this section, we will state mainly quenched estimates (see Propositions \ref{S:p:probability_of_going_down}, \ref{S:p:horizontal_bounds} and \ref{S:p:exit_box}).

\begin{proof}
For the sake of simplicity, we write the proof for $y_0=o$ and $\Gamma_0=\mathbf{0}$, and we assume that $w=o$ (up to changing $H$). Let us first show \eqref{S:markov_type1} for a fixed environment $\omega\in\mathcal{A}$. Let $y\in\ZZ^2$, $\Gamma\in\mathcal{H}$ and $H\in\NN$. First, note that the $(U_n^{y,\Gamma})_{n\in\NN^*}$ are measurable with respect to $\{U(x,i),\,x\in\ZZ^2,\,i>\Gamma(x)\}$. Furthermore, we claim that for every $t\in\NN$, $\{Z_t=y,\,N_t=\Gamma,\,\tau_H=t\}$ is measurable with respect to $\{U(x,i),\,x\in\ZZ^2,\,i\leqslant \Gamma(x)\}$. Let us prove this claim. First note that for every $t\in\NN$, there exists some $\{0,1\}$-valued measurable function $f_t$ on $[0,1]^t$ such that $\{\tau_H=t\}=\{f_t(U_1,\ldots,U_t)=1\}.$ Therefore, we can write
\begin{align*}
&\{Z_t=y,\,N_t=\Gamma,\,\tau_H=t\}\\
&=\bigcup_{ o=y_0,\ldots,y_t=y \atop 1=n_0,\ldots,n_{t-1}=\Gamma(y_{t-1})} \begin{array}{c} \{Z_1=y_1,\ldots, Z_t=y_t\}\cap \{N_1(y_0)=n_0,\ldots N_t(y_{t-1})=n_{t-1}\}\\
\cap\,\{N_t=\Gamma\}\cap\{f_t(U(y_0,n_0),\ldots,U(y_{t-1},n_{t-1}))=1\}.\end{array}
\end{align*}
In the union above, all the choices of $y_j$ and $n_j$ (where $0\leqslant j<t$) such that $n_j>\Gamma(y_j)$ give an empty contribution. Indeed, each choice of $y_j$ and $n_j$ corresponds to an event that is included in $\{N_{j+1}(y_j)=n_j,\,N_t=\Gamma\}$; therefore, if $n_j>\Gamma(y_j)$, then $N_{j+1}(y_j)>N_t(y_j)$, which is impossible since $s\mapsto N_s(x)$ is non-decreasing for any $x\in\ZZ^2$. Considering that each event in the union above is measurable with respect to $\{U(y_0,n_0),\ldots,U(y_{t-1},n_{t-1})\}$, the claim is proven.

Consequently, by Proposition \ref{S:p:independence_uniforms}, $\{Z_t=y,\,N_t=\Gamma,\,\tau_H=t\}$ is $\PP^\omega$-independent from the $\sigma$-algebra generated by $(U_n^{y,\Gamma})_{n\in\NN^*}$, so it is $\PP^\omega$-independent of $A^{y,\Gamma}$. As a result, we have
\begin{align*}
    \PP^\omega(\Theta_H A)=\PP^\omega\left(A^{Z_{\tau_H},N_{\tau_H}}\right)
    &=\displaystyle\sum_{y\in\ZZ^2,\,\Gamma\in\mathcal{H}} \sum_{t\in\NN}\, \PP^{\omega}(A^{y,\Gamma},\,Z_t=y,\,N_t=\Gamma,\,\tau_H=t)\\
    &=\displaystyle\sum_{y\in\ZZ^2,\,\Gamma\in\mathcal{H}} \PP^{\omega}(A^{y,\Gamma})\,\PP^\omega(Z_{\tau_H}=y,\,N_{\tau_H}=\Gamma)&\mbox{by independence}\\
    &\leqslant \underset{y,\Gamma}{\sup}\;\displaystyle\PP^\omega(A^{y,\Gamma}),
\end{align*}
concluding the proof of \eqref{S:markov_type1}. The annealed estimate \eqref{S:markov_type} is obtained by integrating over $\mathcal{A}$ and bounding the integral by the supremum of the function integrated.
\end{proof}

\subsection{Localization properties}\label{S:sss:localization_properties}

\subsubsection{Vertical lower bound}\label{S:ssss:global_lower_bound}

We define an event guaranteeing that $Z^{y,\Gamma}$ stays above some horizontal line by setting, for $H\in\NN$,
    \begin{align}\label{S:n:event_E}
    E_H^{y,\Gamma}=\left\{\forall n\in\NN,\;Y_n^{y,\Gamma}\geqslant\pi_2(y)-H\right\}.
    \end{align}

\begin{proposition}\label{S:p:probability_of_going_down}
\ncS{S:c:proba_of_going_down} There exists $\ucS{S:c:proba_of_going_down}>0$ such that for every $\omega\in\mathcal{A}$ and $H\in\NN$,
\begin{align*}
\PP^\omega\left((E_H^{y,\Gamma})^c\right)\leqslant \ucS{S:c:proba_of_going_down}^{-1} e^{-\ucS{S:c:proba_of_going_down}H}.
\end{align*}
This inequality is also true when replacing $\PP^\omega$ by $\PP$.
\end{proposition}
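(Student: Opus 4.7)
The approach would be to reduce to the case $y = o$, $\Gamma = \mathbf{0}$ via translation invariance, and then to combine Assumption~\ref{S:a:cut_line} with Fact~\ref{S:f:ball} through a simple geometric inclusion.

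First I would apply Corollary~\ref{S:c:invariance}(2): under $\PP^\omega$, the process $Z^{y,\Gamma}-y$ has the law of $Z$ under $\PP^{\theta^y\omega}$. Since $\omega\in\mathcal{A}$ if and only if $\theta^y\omega\in\mathcal{A}$ (as noted right after Corollary~\ref{S:c:invariance}), any bound of the form $\PP^\omega(\exists n\in\NN:\,Y_n\leqslant -H)\leqslant \ucS{S:c:proba_of_going_down}^{-1}e^{-\ucS{S:c:proba_of_going_down}H}$ uniform over $\omega\in\mathcal{A}$ transfers immediately to $\PP^\omega((E_H^{y,\Gamma})^c)$ for every $y$ and $\Gamma$, and the annealed version follows by integrating over $\sP$. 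Thus I only need to control $\PP^\omega(\exists n:\,Y_n\leqslant -H)$ for an arbitrary $\omega\in\mathcal{A}$.

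The key observation would be the inclusion
\[
\{\exists n\in\NN:\,Y_n\leqslant -H\}\subseteq \{\tau_{\Xi(Z)}\geqslant H\}.
\]
Indeed, by definition of $\Xi(Z)$, the line $\RR\times\{\Xi(Z)\}$ is a cut line for $Z$, so $Y_n\geqslant\Xi(Z)\geqslant 0$ for every $n\geqslant \tau_{\Xi(Z)}$; and for $n\leqslant \tau_{\Xi(Z)}$, since each step of $Z$ changes $Y$ by at most $1$, we have $Y_n\geqslant -n\geqslant -\tau_{\Xi(Z)}$. Hence $\inf_n Y_n\geqslant -\tau_{\Xi(Z)}$, which yields the inclusion.

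The last step would be to split according to the size of $\Xi(Z)$. Setting $K:=\lfloor H/(2\beta)\rfloor$ and using the monotonicity $\tau_\xi\leqslant \tau_K$ for $0\leqslant \xi\leqslant K$,
\[
\{\tau_{\Xi(Z)}\geqslant H\}\subseteq \{\Xi(Z)>K\}\cup\{\tau_K\geqslant H\}.
\]
Assumption~\ref{S:a:cut_line} bounds the first event by $\ucS{S:c:cut_line}^{-1}e^{-\ucS{S:c:cut_line}K}$. For the second, since $\beta K\leqslant H/2\leqslant H$, we have $\{\tau_K\geqslant H\}\subseteq\{\tau_K\geqslant \beta K\}$, and Fact~\ref{S:f:ball} bounds this by $\ucS{S:c:ball}^{-1}e^{-\ucS{S:c:ball}K}$. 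Since $K\geqslant H/(2\beta)-1$, both pieces yield exponential decay in $H$ with rate depending only on $\beta$, $\ucS{S:c:cut_line}$ and $\ucS{S:c:ball}$; choosing $\ucS{S:c:proba_of_going_down}$ accordingly concludes.

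I do not expect any real obstacle here: the proposition essentially repackages Assumptions~\ref{S:a:ballisticity} and~\ref{S:a:cut_line}, the only substantive content being the geometric observation that the walk cannot dip far below its starting height without significantly delaying its arrival at the cut line.
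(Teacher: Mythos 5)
Your proof is correct, but it takes a genuinely different route from the paper's. The paper does not invoke cut lines at all: it simply observes that a nearest-neighbor walk cannot satisfy $Y_n\leqslant -H$ before time $n=H$, writes $\PP^\omega(\exists n,\,Y_n\leqslant -H)=\PP^\omega(\exists n\geqslant H,\,Y_n\leqslant -H)$, bounds each term by $\PP^\omega(Y_n\leqslant\beta^{-1}n)\leqslant \ucS{S:c:ballisticity}^{-1}e^{-\ucS{S:c:ballisticity}n}$ via Assumption~\ref{S:a:ballisticity} (valid since $-H\leqslant 0\leqslant\beta^{-1}n$), and sums the geometric series. Your argument instead routes through Assumption~\ref{S:a:cut_line} and Fact~\ref{S:f:ball}: the inclusion $\{\exists n,\,Y_n\leqslant -H\}\subseteq\{\tau_{\Xi(Z)}\geqslant H\}$ and the subsequent split over the size of $\Xi(Z)$ are both sound (modulo the usual adjustment of the constant to absorb the finitely many $H$ with $\lfloor H/(2\beta)\rfloor=0$). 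What the paper's version buys is economy of hypotheses --- the proposition follows from ballisticity alone, which matters structurally since Proposition~\ref{S:p:probability_of_going_down} is used pervasively and one would not want it to secretly depend on the cut-line assumption --- and a one-line union bound in place of the two-step decomposition. Your version is not wrong, but it obscures the fact that the statement is really just a repackaging of Assumption~\ref{S:a:ballisticity} together with the nearest-neighbor constraint $|Y_n|\leqslant n$, an observation you in fact already use when writing $Y_n\geqslant -n$.
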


\begin{proof}
    Using Corollary \ref{S:c:invariance}, we may assume that $y=0$ and $\Gamma=\mathbf{0}$. Let $\omega\in\mathcal{A}$ and $H\in\NN$. For every $n\in\NN$, we have
    $$\PP^\omega( Y_n\leqslant -H)\leqslant \PP^\omega( Y_n\leqslant \beta^{-1} n)\leqslant \ucS{S:c:ballisticity}^{-1} e^{-\ucS{S:c:ballisticity} n},$$
    using Assumption \ref{S:a:ballisticity}. Therefore,
    \begin{align*}
        \PP^\omega(\exists\, n\in\NN,\, Y_n\leqslant -H)
        &=\PP^\omega(\exists\, n\geqslant H,\, Y_n\leqslant -H)\\
        &\leqslant \sum_{n\geqslant H} \ucS{S:c:ballisticity}^{-1} e^{-\ucS{S:c:ballisticity} n}\\
        &=\frac{\ucS{S:c:ballisticity}^{-1} e^{-\ucS{S:c:ballisticity} H}}{1-e^{-\ucS{S:c:ballisticity}}}.
    \end{align*}
    We get the result by setting $\ucS{S:c:proba_of_going_down}=\ucS{S:c:ballisticity}(1-e^{-\ucS{S:c:ballisticity}}).$ In order to get the annealed estimate, we integrate the quenched estimate.
\end{proof}

We will use Proposition \ref{S:p:probability_of_going_down} extensively in the rest of the paper. For now, it allows us to prove that Lemma \ref{S:l:LLN_spatial_subsequence} is sufficient to prove Theorem \ref{S:t:LLN_spatial}, using an argument of interpolation.

\begin{proof}[Proof of Theorem \ref{S:t:LLN_spatial} assuming Lemma \ref{S:l:LLN_spatial_subsequence}]\label{S:final_proof}
Let $\nu$ be as in Lemma \ref{S:l:LLN_spatial_subsequence}. Let $n\in\NN^*$ be such that $ Y_n>0$ (which, by Assumption \ref{S:a:ballisticity}, happens for $n$ large enough $\PP$-almost surely). Let $H_n\in\NN$ be such that $\tau_{H_n}\leqslant n<\tau_{H_n+1}.$ Note that, using Assumption \ref{S:a:ballisticity} again, \begin{align}\label{S:H_n_to_infty}
H_n\xrightarrow[n\to\infty]{a.s}+\infty.
\end{align}
Also, note that $Y_n\leqslant H_n$, so that, for $n$ large enough, we have
\begin{align}\label{S:e:bound_H_n}
    \PP(H_n\leqslant \beta^{-1} n)\leqslant \PP( Y_n\leqslant\beta^{-1} n)\leqslant \ucS{S:c:ballisticity}^{-1}\,e^{-\ucS{S:c:ballisticity} n}.
\end{align}
Now, note that
\begin{align}\label{S:e:three_terms}
\frac{ X_n}{ Y_n}=\frac{X_{\tau_{H_n}}}{H_n}+\frac{ X_n-X_{\tau_{H_n}}}{H_n}+ X_n\left(\frac{1}{ Y_n}-\frac{1}{H_n}\right).
\end{align}
First, using \eqref{S:H_n_to_infty} and Lemma \ref{S:l:LLN_spatial_subsequence}, we have
\begin{align}\label{S:e:1st_term}
    \frac{X_{\tau_{H_n}}}{H_n}\xrightarrow[n\to\infty]{a.s.} \nu.
\end{align}
As for the second term on the right-hand side of \eqref{S:e:three_terms}, let us fix $a>0$ and note that
\begin{align*}
    \PP\left(\vert  X_n-X_{\tau_{H_n}}\vert \geqslant a H_n\right)
    &\leqslant \PP(\tau_{H_n+1}-\tau_{H_n}\geqslant aH_n)\\
    &\leqslant \PP\left(\Theta_{H_n}\tau_{1,Z_{\tau_{H_n}}}\geqslant a\beta^{-1} n\right)+\PP(H_n\leqslant \beta^{-1} n).
\end{align*}
Now, if we fix $\omega\in\mathcal{A}$, for $n$ large enough we have, using Fact \ref{S:f:ball},
\begin{align*}
    \PP^\omega(\tau_1\geqslant a \beta^{-1} n)
    \leqslant \PP^\omega(\tau_{\lfloor a\beta^{-2}n\rfloor}\geqslant \beta \lfloor a\beta^{-2}n\rfloor)
    \leqslant c^{-1}e^{-c n}.
\end{align*}
Therefore, using Proposition \ref{S:p:independence} and \eqref{S:e:bound_H_n},
\begin{align}\label{S:e:equation_new}
    \PP\left(\vert  X_n-X_{\tau_{H_n}}\vert \geqslant a H_n\right)\leqslant c^{-1}e^{-c n},
\end{align}
which is summable in $n$. Using Borel-Cantelli, we obtain that
\begin{align}\label{S:e:2nd_term}
    \frac{ X_n-X_{\tau_{H_n}}}{H_n}\xrightarrow[n\to\infty]{a.s.}0.
\end{align}
In order to estimate the third term on the right-hand side of \eqref{S:e:three_terms}, first note that if $ Y_n\geqslant H_n-H_n^{1/2}$ and $H_n>\beta^{-1} n$, then we have, for $n$ large enough,
\begin{align*}
    \left\vert X_n\left(\frac{1}{ Y_n}-\frac{1}{H_n}\right)\right\vert
    =\left\vert\frac{ X_n}{ Y_n}\right\vert \frac{H_n- Y_n}{H_n}\leqslant \frac{n}{H_n/2}\frac{H_n^{1/2}}{H_n}=2n H_n^{-3/2}.
\end{align*}
In the first equality, we used that $H_n- Y_n\geqslant 0$, since $n<\tau_{H_n+1}$. In the inequality, we used that $\vert X_n\vert\leqslant n$, and that for $n$ large enough, $H_n>\beta^{-1} n\geqslant 4$, so $H_n-H_n^{1/2}\geqslant H_n/2$.
Therefore, if we fix $a>0$, we have, for $n$ large enough,
\begin{align*}
    \PP\left(\left\vert X_n\left(\frac{1}{ Y_n}-\frac{1}{H_n}\right)\right\vert\geqslant a\right)
    &\leqslant \PP( Y_n<H_n-H_n^{1/2})+\PP(H_n\leqslant (2n/a)^{2/3})+\PP(H_n\leqslant \beta^{-1} n)\\
    &\leqslant\PP\left(\left(\Theta_{H_n} E_{\lfloor (\beta^{-1} n)^{1/2}\rfloor}\right)^c\right)  +3\PP(H_n\leqslant \beta^{-1} n)\\
    &\leqslant c^{-1} e^{-c n^{1/2}}+3c^{-1}e^{-cn},
\end{align*}
using Propositions \ref{S:p:independence} and \ref{S:p:probability_of_going_down}, as well as \eqref{S:e:bound_H_n}. Applying the Borel-Cantelli lemma once more, we obtain that
\begin{align}\label{S:e:3rd_term}
     X_n\left(\frac{1}{ Y_n}-\frac{1}{H_n}\right)\xrightarrow[n\to\infty]{a.s.}0.
\end{align}
Putting together \eqref{S:e:three_terms}, \eqref{S:e:1st_term}, \eqref{S:e:2nd_term} and \eqref{S:e:3rd_term}, we obtain that $$\frac{ X_n}{ Y_n}\xrightarrow[n\to\infty]{a.s.}\nu,$$
concluding the proof of Theorem \ref{S:t:LLN_spatial}.
\end{proof}

\subsubsection{Horizontal bounds}\label{S:sss:horizontal_bounds}
It will also be essential to control the horizontal behavior of the random walk. The lack of strong information on the horizontal behavior of the random walks prevents us from getting a global horizontal bound as in Section \ref{S:ssss:global_lower_bound}. However, what we can do using Assumption \ref{S:a:ballisticity} is bound the horizontal displacement of the random walk by the time it reaches a certain height. To that end, we define the following event, for $y\in\ZZ^2$, $\Gamma\in\mathcal{H}$ and $H\in\NN^*$ (recall $\beta$ from Assumption \ref{S:a:ballisticity}):
\begin{align}\label{S:d:event_D}
    D^{y,\Gamma}_H=\left\{\forall n\in \left\llbracket 0,\tau_{H,y}^{y,\Gamma}\right\rrbracket,\;\left\vert X_n^{y,\Gamma}-\pi_1(y)\right\vert\leqslant \beta H\right\}.
\end{align}

\begin{proposition}\label{S:p:horizontal_bounds}
There exists $\ucS{S:c:ball}>0$ such that for every $\omega\in\mathcal{A}$, $y\in\ZZ^2$, $\Gamma\in\mathcal{H}$ and $H\in\NN^*$, 
\begin{align}\label{S:e:estimate_horizontal}
    \PP^\omega\left((D^{y,\Gamma}_H)^c\right)\leqslant \ucS{S:c:ball}^{-1}e^{-\ucS{S:c:ball} H},
\end{align}
and the same estimate holds when replacing $\PP^\omega$ by $\PP.$
\end{proposition}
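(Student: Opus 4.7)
The plan is to reduce the statement to a direct consequence of Fact \ref{S:f:ball}. By Corollary \ref{S:c:invariance}, the quenched law of $Z^{y,\Gamma}-y$ under $\PP^{\omega}$ equals that of $Z$ under $\PP^{\theta^y\omega}$, and $\theta^y\omega$ still belongs to $\mathcal{A}$. Hence it suffices to bound $\PP^{\omega}((D_H)^c)$ uniformly in $\omega\in\mathcal{A}$, where $D_H=\{\forall n\in\llbracket 0,\tau_H\rrbracket,\;|X_n|\leqslant \beta H\}$.

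The key observation is the deterministic bound $|X_n|\leqslant n$, which comes from the fact that the random walk moves by one unit of $\ell^1$-norm at each step. Therefore, if $(D_H)^c$ occurs, there exists some $n\leqslant \tau_H$ with $|X_n|>\beta H$, and this $n$ satisfies $n\geqslant |X_n|>\beta H$. Consequently $\tau_H\geqslant n>\beta H$, which yields the inclusion
\begin{align*}
(D_H)^c\subseteq\{\tau_H\geqslant \beta H\}.
\end{align*}

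Applying Fact \ref{S:f:ball} directly gives $\PP^{\omega}((D_H)^c)\leqslant \PP^{\omega}(\tau_H\geqslant \beta H)\leqslant \ucS{S:c:ball}^{-1}e^{-\ucS{S:c:ball} H}$, uniformly in $\omega\in\mathcal{A}$. Integrating this quenched estimate against $\sP$ yields the annealed version, completing the proof. There is essentially no obstacle here: the statement is a repackaging of the vertical ballisticity estimate through the trivial $\ell^1$-control on horizontal displacement.
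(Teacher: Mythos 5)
Your proof is correct and follows essentially the same route as the paper's: reduce to $y=o$, $\Gamma=\mathbf{0}$ via Corollary \ref{S:c:invariance}, observe that $(D_H)^c\subseteq\{\tau_H\geqslant \beta H\}$ (which the paper states without the explicit $|X_n|\leqslant n$ justification you supply), and conclude by Fact \ref{S:f:ball} plus integration for the annealed bound.
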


\begin{proof}
    First note that using Corollary \ref{S:c:invariance}, we may assume that $y=o$ and $\Gamma=\mathbf{0}$. Now, observe that for every $\omega\in\mathcal{A}$, $\PP^\omega(D_H^c)\leqslant \PP^\omega(\tau_H\geqslant \beta H)$. Then we simply use Fact \ref{S:f:ball}.
\end{proof}

\subsubsection{Localization in boxes}\label{S:ssss:loc_in_boxes}

In order to apply Fact \ref{S:p:mixing}, we will have to localize events in boxes. In practice, this will be done by working on events of high probability that ensure that our random walks stay in some boxes before reaching a certain height, or, in other words, that they exit those boxes through the top side. This simply requires to put together the results of Sections \ref{S:ssss:global_lower_bound} and \ref{S:sss:horizontal_bounds}. However, we actually want something stronger: we want to control the behavior of a lot of random walks simultaneously. This will be instrumental for Section \ref{S:ss:part2}.


We will often use the following notation, for $H\in\RR_+^*$,
\begin{align}\label{S:n:H'}
H'=\left\lceil H^{1/2}\right\rceil.
\end{align}
We will also use this notation with specific values of $H$: for instance in the future we will write $H_k'$ for $\left\lceil H_k^{1/2}\right\rceil$ or $(hL_k)'$ for $\left\lceil (hL_k)^{1/2}\right\rceil.$

\begin{definition}\label{S:d:box}
Let $H\in\RR_+^*$ and $w\in\RR\times\ZZ$. We define
\begin{align}\label{S:d:interval_box}
\left\{\begin{array}{l}
    {\color{black}I_H(w)=(w+[0,H)\times [0,H'))\cap\ZZ^2;}\vspace{2pt}\\
    \mathcal{I}_H(w)=(w+[0,H)\times \{0\})\cap \ZZ^2;\\
    B_H(w)=w+[-\beta H, (\beta+1) H)\times [-H',H]\subseteq \RR^2.
\end{array}\right.\end{align}
We also define the following events, for $H\in\NN^*$ and $w\in\RR\times\ZZ$:
\begin{align}\label{S:d:event_F}
F_H(w)=\bigcap_{y\in I_H(w)}\left\{\forall n\in\llbracket 0,\tau_{H,w}^y\rrbracket,\, Z^y_n\in B_H(w)\right\}.
\end{align}
As usual, $I_H=I_H(o)$, $B_H=B_H(o)$ and $F_H=F_H(o)$.
\end{definition}

See Figure \ref{S:f:boxes} for an illustration of those definitions.

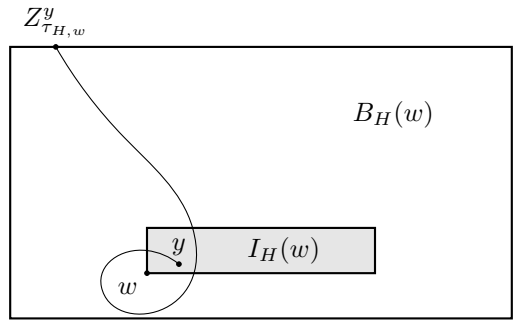
\begin{figure}[!h]
    \centering
    \begin{tikzpicture}[use Hobby shortcut, scale=0.6]
        \draw[fill, black!10!white] (0,0) rectangle (5,1);
        \draw[thick] (0,0) rectangle (5,1);
        \draw[above] (3,0) node {$I_H(w)$};
        \draw[fill,below left] (0,0) circle (.05) node {$w$};
        \draw[fill, above] (0.7,0.2) circle (.05) node {$y$};
        \draw (0.7,0.2) .. (0,0.5) .. (-1,-0.3) .. (1,1) .. (-0.5,3) .. (-2,5);
        \draw[fill, above] (-2,5) circle (.05) node {$Z^{y}_{\tau_{H,w}}$};
        \draw[thick, below left] (-3,5) rectangle (8,-1);
        \draw[below left] (6.5,4) node {$B_H(w)$};
    \end{tikzpicture}
    \caption{Illustration of Definition \ref{S:d:box}. On $F_H(w)$, random walk $Z^y$ has to exit $B_H(w)$ through the top side.}
    \label{S:f:boxes}
\end{figure}

Note that in Definition \ref{S:d:box}, we used a real parameter $H>0$, while $H$ is usually in $\NN$. This is because we will use the objects defined above with non-integer parameters as of Section \ref{S:ss:traps}.

\begin{proposition}\label{S:p:exit_box}
\ncS{S:c:box}
    There exists $\ucS{S:c:box}>0$ such that for every $w\in \R\times\ZZ$ and $H\in\NN^*$, we have
\begin{align}\label{S:e:to_be_shown}
\PP\left(F_H(w)^c\right)\leqslant \ucS{S:c:box}^{-1}e^{-\ucS{S:c:box} H^{1/2}}.\end{align}
This is also true when replacing $\PP$ by $\PP^\omega$ for $\omega\in\mathcal{A}.$
\end{proposition}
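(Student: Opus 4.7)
By Assumption \ref{S:a:translation_inv} together with Corollary \ref{S:c:invariance}, the joint law of the walks appearing in $F_H(w)$ is invariant under integer translations, so it suffices to treat $w=o$. Then $I_H\subseteq\llbracket 0,H-1\rrbracket\times\llbracket 0,H'-1\rrbracket$ and $B_H=[-\beta H,(\beta+1)H)\times[-H',H]$. The strategy is to decompose the failure event $F_H^c$ over the starting points $y\in I_H$ into two types of bad behaviour---the walk $Z^y$ drifting too far horizontally, or dipping too far below---and then invoke the single-walk estimates of Sections \ref{S:ssss:global_lower_bound} and \ref{S:sss:horizontal_bounds}.

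Concretely, I would establish the inclusion
\begin{equation*}
F_H^c\subseteq \bigcup_{y\in I_H}\bigl((D_H^y)^c\cup (E_{H'}^y)^c\bigr).
\end{equation*}
To check it, fix $y\in I_H$ and verify that $D_H^y\cap E_{H'}^y$ implies $Z_n^y\in B_H$ for every $n\leqslant \tau_{H,o}^y$. For the horizontal side: since $\pi_2(y)\geqslant 0$, the hitting times satisfy $\tau_{H,o}^y\leqslant \tau_{H,y}^y$, so the bound $|X_n^y-\pi_1(y)|\leqslant \beta H$ from $D_H^y$ is available on $\llbracket 0,\tau_{H,o}^y\rrbracket$, and combined with $\pi_1(y)\in\llbracket 0,H-1\rrbracket$ this yields $X_n^y\in[-\beta H,(\beta+1)H)$. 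For the vertical sides: $E_{H'}^y$ forces $Y_n^y\geqslant \pi_2(y)-H'\geqslant -H'$ for every $n$, while $Y_n^y\leqslant H$ for $n\leqslant \tau_{H,o}^y$ is immediate from the nearest-neighbor nature of the walk together with the definition of $\tau_{H,o}^y$ as the first hitting time of height $H$.

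Once the inclusion is in hand, for any $\omega\in\mathcal{A}$, a union bound together with Propositions \ref{S:p:horizontal_bounds} and \ref{S:p:probability_of_going_down} gives
\begin{equation*}
\PP^\omega(F_H^c)\leqslant |I_H|\bigl(c^{-1}e^{-cH}+c^{-1}e^{-cH'}\bigr)\leqslant c\,H^{3/2}e^{-cH^{1/2}},
\end{equation*}
using $|I_H|=HH'\leqslant 2H^{3/2}$ and $H'\geqslant H^{1/2}$. Absorbing the polynomial factor into the exponential at the cost of slightly decreasing the constant then yields the claimed bound $\ucS{S:c:box}^{-1}e^{-\ucS{S:c:box}H^{1/2}}$, and integrating over $\omega\in\mathcal{A}$ gives the annealed version. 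I do not expect any substantial obstacle: the only point requiring care is that $B_H$ is asymmetric around $y$, so one must use the precise ranges of $\pi_1(y)$ and $\pi_2(y)$ over $I_H$ (together with $\tau_{H,o}^y\leqslant \tau_{H,y}^y$) to translate the single-walk estimates into the containment in $B_H$.
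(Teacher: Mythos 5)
Your proof is correct and follows essentially the same route as the paper: a union bound over $y\in I_H(w)$ reducing $F_H(w)^c$ to the events $(D_H^y)^c$ and $(E_{H'}^y)^c$, controlled by Propositions \ref{S:p:horizontal_bounds} and \ref{S:p:probability_of_going_down}. One cosmetic caveat: since $\pi_1(w)$ may be non-integer, the event $F_H(w)$ is not an exact translate of $F_H(o)$, so rather than reducing to $w=o$ one should simply note that your union-bound argument is uniform in $w$ (which it is).
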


\begin{proof}
This is a direct consequence of Propositions \ref{S:p:probability_of_going_down} and \ref{S:p:horizontal_bounds}, along with a union bound.
\end{proof}


\begin{remark}\label{S:r:mixing_boxes}
\ncSH{H:size_box}
We will often use Fact \ref{S:p:mixing} with boxes $\{B_H(w),w\in\RR\times\ZZ\}$ and $h=H/2$. The diameter of $B_H(w)$ is precisely $(2\beta+1)H=2(2\beta+1)h$, and its height is equal to $H+H'$, and we want it to be at most $3h$, so we want $H'\leqslant H/2$. Furthermore, we will often encounter pairs of boxes $B,B'$ satisfying $\mathrm{sep}(B,B')\geqslant H-H'$, which is another reason to require $H'\leqslant H/2$, so that $\mathrm{sep}(B,B')\geqslant h$. Therefore, for the rest of the paper, we will work with $H\geqslant \ucSH{H:size_box}$, where $\ucSH{H:size_box}\in\NN$ is such that $$\forall H\geqslant \ucSH{H:size_box},\;H'\leqslant H/2.$$
\end{remark}

\subsection{The multi-scale renormalization method}\label{S:sss:renormalization_scheme}

The proofs of several major propositions in the rest of the paper are based on the fundamental idea of multi-scale renormalization, which gives a practical method for using mixing property \eqref{S:p:mixing}. We now give a general idea of how such a proof works, and we will often refer to it in the future.

Suppose we want to show an estimate for the probability of some family of "bad" events $(A_H)_{H\in\NN}.$

Step 1. We start by focusing on some subsequence $(A_{H_k})_{k}$. We set $p_{k}=\PP(A_{H_k})$. We show that when $A_{H_{k+1}}$ occurs, we can find two boxes $B,B'$ satisfying the assumptions of Fact \ref{S:p:mixing} with $h=h_k$ for some $h_k>0$, such that two events of probability $p_k$ supported by $B$ and $B'$ occur. There can be entropy for where theses two boxes are; let us say that there are $C_k$ possible couples of boxes.

Step 2. We deduce the desired estimate for $(p_k)_{k\in\N}$:
\begin{itemize}
\item Using Fact \ref{S:p:mixing} and a union bound, we get $$p_{k+1}\leqslant C_k\,(p_k^2+\ucS{S:c:mixing} h_k^{-\alpha}).$$
\item From this inequality we deduce the desired estimate of $p_k$ by induction on $k$, provided that the base case is satisfied. For this to work, the scales $(H_k)_k$ and the estimate to show have to be chosen properly, and some control on $C_k$ is necessary.
\end{itemize}

Step 3. We conclude using an interpolation argument to get a result for every $H\in\NN$ instead of only the subsequence.

In order to accommodate to the polynomial mixing, it will be useful to use the following scales.
\begin{definition}\label{S:d:scales}
We set $L_0=10^{10}$ and, for $k\geqslant 0$, $$L_{k+1}=l_k\,L_k,\;\;\;\text{where}\;\;\;l_k=\lfloor L_k^{1/4}\rfloor.$$
\end{definition}

The choice $10^{10}$ will become clearer in the proof of Proposition \ref{S:p:density}.

The rest of this paper will be dedicated to showing Lemma \ref{S:l:LLN_spatial_subsequence}. To do this, we strongly rely on methods developed in \cite{BHT}. First, in Section \ref{S:ss:part1}, we will show that there exist limiting directions $v_-$ and $v_+$ that bound the asymptotic behavior of our random walk with high probability. This requires to adapt the methods in \cite{BHT} by addressing two technical issues: the deterministic drift in the vertical direction is lost in the static framework, and the random walks can revisit their pasts. Then, in Section \ref{S:ss:part2}, we will show that these two limiting directions actually coincide, which will give us the limiting direction $\nu$ in Lemma~\ref{S:l:LLN_spatial_subsequence}. It is in this part of the proof that introducing a weaker "barrier" property as a replacement of the monotonicity property of \cite{BHT} will be instrumental; see Proposition \ref{S:p:barrier_property}.

\section{Limiting directions}\label{S:ss:part1}

\subsection{Definitions and main results}
Recall that we say that all points $x\in\RR^2$ satisfying $\pi_1(x)=a\pi_2(x)$ with $\pi_2(x)\neq 0$ have direction $a$. The goal of this section is to show that there exist two directions $v_-$ and $v_+$ that somehow bound the spatial behavior of our random walk in the long run. This property is made clearer in Lemma \ref{S:p:estimates_on_p}. It will consist of the first part of the proof of Lemma \ref{S:l:LLN_spatial_subsequence}, and we will show that in fact $v_-=v_+$ in Section \ref{S:ss:part2}, thus concluding the proof.

As a matter of fact, we aim at showing a stronger version of Lemma \ref{S:l:LLN_spatial_subsequence} by considering not only one fixed random walk but all the random walks starting simultaneously in $I_H(w)$ from Definition \ref{S:d:box}. This will be instrumental in Section \ref{S:ss:part2}, where we will need to control the directions of lots of random walks at once.

Recall notation $\tau^{y,\Gamma}_{H,w}$ from Definition \ref{S:d:tau}.

\begin{definition}\label{S:d:limiting_speeds}
Let $w\in\RR\times\ZZ$ and $H\in\NN^*$. Let $y\in I_H(w)$ and $\Gamma\in\mathcal{H}$. We define the direction of $Z^{y,\Gamma}$ at height $H$ with reference point $w$ to be
\begin{align*}\begin{array}{c}
    V_{H,w}^{y,\Gamma}=\displaystyle\frac{1}{H}\;\left(X^{y,\Gamma}_{\tau_{H,w}}-\pi_1(y)\right).\end{array}
\end{align*}
As usual, when $w$ or $y$ is omitted, it means that it is $o$, and an omission of $\Gamma$ means $\Gamma=\mathbf{0}.$ Now let $v\in\RR$. We consider the following events:
$$\begin{array}{l}
A_{H,w}(v)=\left\{\exists\, y\in I_H(w),\,V_{H,w}^{y}\geqslant v\right\};\\
\tilde{A}_{H,w}(v)=\left\{\exists\,y\in I_H(w),\,V_{H,w}^{y}\leqslant v\right\}.
\end{array}$$
As usual, $A_H(v)=A_{H,o}(v)$ and $\Tilde{A}_H(v)=\Tilde{A}_{H,o}(v)$. We set
$$\begin{array}{l} p_H(v)=\PP(A_{H}(v));\\
\tilde{p}_H(v)=\PP(\tilde{A}_{H}(v)).  \end{array}$$
We define the limiting directions by setting
$$\begin{array}{l}        v_+=\inf\left\{v\in\RR,\,\displaystyle\liminf_{H\to\infty} p_H(v)=0\right\};\\
        v_-=\sup\left\{v\in\RR,\,\displaystyle\liminf_{H\to\infty} \tilde{p}_H(v)=0\right\}.\end{array}$$
\end{definition}

\begin{figure}[!h]
    \centering
    \begin{tikzpicture}[use Hobby shortcut, scale=0.8]
        \draw[thick] (0,0) rectangle (5,1);
        \draw[above left] (3,0) node {$I_H(w)$};
        \draw[fill,below left] (0,0) circle (.05) node {$w$};
        \draw[fill, above] (0.7,0.5) circle (.05) node {$y$};
        \draw (0.7,0.5) .. (1,0.3) .. (1,0.5) .. (1.5,2) .. (-0.5,3) .. (-1.7,5);
        \draw[fill, above] (-1.7,5) circle (.05) node {$(\pi_1(y)+H V^y_{H,w},\pi_2(w)+H)$};
        \draw[dashed] (-2.3,5) -- (-2.3,-1);
        \draw[dashed] (3.7,5) -- (3.7,-1);
        \draw[thick, dotted] (0.7,0) -- (-2,5);
        \draw[->,>=latex] (0.6,-1.2) -- (-2.3,-1.2);
        \draw[->,>=latex] (0.8,-1.2) -- (3.7,-1.2);
        \draw[below left] (0.6,-1.2) node{$\beta H$};
        \draw[below right] (0.8,-1.2) node{$\beta H$};
        \draw[thick, below left] (-3,5) rectangle (8,-1);
        \draw[below left] (6.5,4) node {$B_H(w)$};
    \end{tikzpicture}
    \caption{Illustration of $A_{H,w}(v)$. The sample path started at $y$ reaches height $\pi_2(w)+H$ with $V_{H,w}^y$ larger than the direction $\nu$ given by the dotted line.}
    \label{S:f:boxes2}
\end{figure}
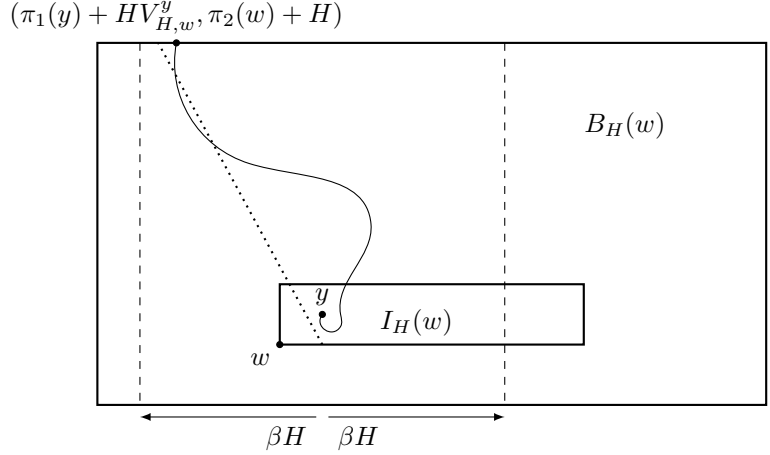

Note that when $\pi_2(y)=\pi_2(w)$, $V_{H,w}^{y,\Gamma}=V_{H,y}^{y,\Gamma}$ is simply the direction of $Z^{y,\Gamma}$. Mind that when $\pi_2(y)>\pi_2(w)$ however, this is not exactly true anymore.

Note that because of translation invariance, $\PP(A_{H,w}(v))$ and $\PP(\tilde{A}_{H,w}(v))$ actually do not depend on $w$, which is why we only considered the random walk starting at the origin for the definitions of $v_-$ and $v_+$. Indeed, we can first restrict ourselves to $w\in (-1,0]\times\{0\}$, using Corollary \ref{S:c:invariance}. Then, $H$ being in $\NN$ here, we observe that $I_H(w)=I_H(o)$ for every $w\in(-1,0]\times\{0\}$, so that $A_{H,w}(v)=A_H(v)$. This would be wrong if $H$ was any positive real number, in which case, depending on $w$, $I_H(w)$ can have cardinality $\lfloor H\rfloor H'$ or $\lceil H\rceil H'$. That is why we will have to be more careful later, in Lemma \ref{S:l:bound_trap}.

It may sound unclear why we use liminfs in the definitions of $v_-$ and $v_+$, instead of limsups. In fact, this will be required in order to get a much needed uniform lower bound on the probability for the random walk to attain directions greater than but close to $v_-$ over long time intervals (see Lemma \ref{S:l:bound_trap}).

Note that we never stated that $v_-\leqslant v_+$. In fact, it is not an obvious consequence of their definitions, but it will be a consequence of Lemma \ref{S:p:estimates_on_p}.

\begin{fact}
We have the following bounds on $v_-$ and $v_+$:
\begin{align}\label{S:e:bounds_on_v_+}\left\{\begin{array}{l}
-\beta\leqslant v_+\leqslant \beta;\\
-\beta\leqslant v_-\leqslant \beta.\end{array}\right.
\end{align}
\end{fact}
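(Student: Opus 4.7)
The plan is to obtain all four inequalities in \eqref{S:e:bounds_on_v_+} as bookkeeping consequences of Proposition \ref{S:p:horizontal_bounds}, which bounds the horizontal displacement of a random walk by $\beta$ times the corresponding vertical displacement up to a hitting time. The first step is to bridge the mild gap between $\tau_{H,w}^{y}$ (appearing in the definition of $V_{H,w}^{y}$) and $\tau_{H,y}^{y}$ (appearing in the definition of $D_{H}^{y,\Gamma}$) when $y\in I_H(w)$: setting $H'':=H-(\pi_2(y)-\pi_2(w))\in(H-H',H]$, I observe that $\tau_{H'',y}^{y,\mathbf{0}}=\tau_{H,w}^{y}$, so Proposition \ref{S:p:horizontal_bounds} applied at scale $H''$ guarantees that on an event of probability at least $1-c^{-1}e^{-c(H-H')}$, one has $|X_{\tau_{H,w}}^{y}-\pi_1(y)|\leqslant\beta H''\leqslant\beta H$, and hence $|V_{H,w}^{y}|\leqslant\beta$.

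To prove $v_+\leqslant\beta$, I will fix any $v>\beta$ and use a union bound over the $|I_H|=HH'\leqslant 2H^{3/2}$ starting points in $I_H$:
\begin{align*}
p_H(v)\leqslant \sum_{y\in I_H}\PP\bigl(V_H^{y}\geqslant v\bigr)\leqslant 2H^{3/2}\,c^{-1}e^{-c(H-H')}\xrightarrow[H\to\infty]{}0.
\end{align*}
Since $v\mapsto p_H(v)$ is non-increasing, this places every $v>\beta$ in the upward-closed set $\{w:\liminf_H p_H(w)=0\}$, forcing $v_+\leqslant\beta$. The symmetric computation, applied to $\tilde{p}_H(v)$ for $v<-\beta$ (using that $v\mapsto\tilde{p}_H(v)$ is non-decreasing), yields $v_-\geqslant -\beta$.

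The matching inequalities $v_+\geqslant -\beta$ and $v_-\leqslant\beta$ will follow from the trivial containments $A_H(v)\supseteq\{V_H\geqslant v\}$ and $\tilde{A}_H(v)\supseteq\{V_H\leqslant v\}$, obtained by restricting to the single starting point $y=o\in I_H$. Proposition \ref{S:p:horizontal_bounds} applied to $Z$ alone gives $\PP(|V_H|\leqslant\beta)\xrightarrow[H\to\infty]{}1$, so for $v<-\beta$, $p_H(v)\geqslant\PP(V_H\geqslant-\beta)\to 1$, and symmetrically $\tilde{p}_H(v)\to 1$ for $v>\beta$. Combined with the monotonicity of $p_H$ and $\tilde{p}_H$, this keeps $v$ outside the corresponding defining set and closes the argument by letting $v\uparrow-\beta$ (respectively $v\downarrow\beta$).

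I do not expect any genuine obstacle here: the entire fact is a direct consequence of horizontal confinement. The only mildly delicate point, already addressed via the auxiliary height $H''$, is that $\tau_{H,w}^{y}$ differs slightly from $\tau_{H,y}^{y}$ whenever $\pi_2(y)>\pi_2(w)$; this costs only a negligible loss from $H$ to $H-H'$ in the exponential decay rate, which is still vastly faster than the polynomial prefactor $H^{3/2}$ coming from the union bound over $I_H$.
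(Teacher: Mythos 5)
Your proof is correct and follows essentially the same route as the paper: all four inequalities come out of Proposition \ref{S:p:horizontal_bounds} together with a union bound over $I_H$ (for the outer bounds $v_+\leqslant\beta$, $v_-\geqslant-\beta$) and restriction to the single starting point $y=o$ (for the inner bounds). The only real difference is cosmetic bookkeeping: you introduce $H''=H-(\pi_2(y)-\pi_2(w))$ to reconcile $\tau_{H,w}^{y}$ with $\tau_{H'',y}^{y}$, whereas the paper simply applies $D_H^y$ at scale $H$ and uses that $\tau_{H,w}^y\leqslant\tau_{H,y}^y$ on $I_H(w)$ to get $|X^y_{\tau_{H,w}}-\pi_1(y)|\leqslant\beta H$ directly; both are valid, and in fact your version only costs a harmless degradation of the exponential rate from $H$ to $H-H'$, which is still superpolynomial against the $HH'$ entropy.
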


\begin{proof}
The proof being symmetric, let us just focus on the bounds for $v_+$.
\begin{itemize}[leftmargin=*]
    \item If $v<-\beta$, then using Proposition \ref{S:p:horizontal_bounds},
        \begin{align*}
            p_H(v)=\PP(\exists\, y\in I_H,\,V_{H}^y\geqslant v)
            \geqslant \PP(V_H\geqslant -\beta)
            \geqslant \PP(D_H)
            \geqslant 1-\ucS{S:c:ball}^{-1} e^{-\ucS{S:c:ball}H}\xrightarrow[H\to\infty]{} 1.
        \end{align*}
    \item If $v>\beta$, using Proposition \ref{S:p:horizontal_bounds} again,
    \begin{align*}
        p_H(v)
        &=\PP(\exists\,y\in I_H,\,V_{H}^y\geqslant v)\leqslant \PP(\exists\,y\in I_H,\,V_{H}^y>\beta)\\
        &\leqslant HH' \sup_{y\in I_H(w)} \PP\left((D_H^y)^c\right)\leqslant \ucS{S:c:ball}^{-1} HH' e^{-\ucS{S:c:ball}H^{1/2}}\xrightarrow[H\to\infty]{} 0.
    \end{align*}
\end{itemize}
\end{proof}

\begin{remark}\label{S:r:monotonicity_v_+}
Note that $v\in\RR\mapsto p_H(v)$ is a non-increasing function. Therefore, for $v>v_+$, we must have $\displaystyle\liminf_{H\to\infty} p_H(v)=0.$ Similarly, for $v<v_-$, $\displaystyle\liminf_{H\to\infty} \tilde{p}_H(v)=0$.
\end{remark}

In spite of Remark \ref{S:r:monotonicity_v_+}, the definitions that we gave for $v_-$ and $v_+$ are quite weak at first glance, because we only have information on the liminfs. Our goal now is to show that for $v>v_+$ and $v<v_-$, the liminfs given in Remark \ref{S:r:monotonicity_v_+} are actual limits, and we will even prove a precise estimate for $p_H(v)$ and $\tilde{p}_H(v)$ when $H$ goes to infinity.

\begin{lemma}[Deviation bounds]\label{S:p:estimates_on_p}
For every $\varepsilon>0$, there exists $\ucS{S:c:deviation}=\ucS{S:c:deviation}(\varepsilon)>0$ such that for every $H\in\NN^*$,
\begin{align}
    \label{S:deviation bounds_estimates}
    p_H(v_++\varepsilon)\leqslant \ucS{S:c:deviation} H^{-\alpha/4};\\
    \label{S:deviation bounds_estimates2}
    \tilde{p}_H(v_--\varepsilon)\leqslant \ucS{S:c:deviation} H^{-\alpha/4}
\end{align}
\end{lemma}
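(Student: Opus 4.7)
By the symmetric roles of $v_+$ and $v_-$ in Definition \ref{S:d:limiting_speeds}, it suffices to establish \eqref{S:deviation bounds_estimates}. I plan to apply the multi-scale renormalization scheme of Section \ref{S:sss:renormalization_scheme} along the scales $(L_k)$ of Definition \ref{S:d:scales}. Fix $\varepsilon>0$ and choose a summable positive sequence $(\delta_k)$ with $\sum_{k\geqslant 0}\delta_k=\varepsilon/2$ (for instance $\delta_k=\varepsilon/2^{k+2}$); set $\varepsilon_k=\varepsilon-\sum_{j\geqslant k}\delta_j$, so that $\varepsilon_0=\varepsilon/2$, $\varepsilon_k\uparrow\varepsilon$ and $\varepsilon_{k+1}-\varepsilon_k=\delta_k$. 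Writing $q_k=p_{L_k}(v_++\varepsilon_k)$, the core induction claim is that there exists $k_0$ such that $q_k\leqslant L_k^{-\alpha/4}$ for every $k\geqslant k_0$; the estimate \eqref{S:deviation bounds_estimates} will then follow by monotonicity of $v\mapsto p_H(v)$ and an interpolation from the subsequence $(L_k)$ to arbitrary $H$.

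The base case exploits the definition of $v_+$: since $v_++\varepsilon/2>v_+$, Remark \ref{S:r:monotonicity_v_+} yields $\liminf_H p_H(v_++\varepsilon/2)=0$, so I can select $k_0$ with $p_{L_{k_0}}(v_++\varepsilon/2)\leqslant L_{k_0}^{-\alpha/4}$; as $\varepsilon_{k_0}\geqslant\varepsilon/2$ and $v\mapsto p_H(v)$ is non-increasing, this gives $q_{k_0}\leqslant L_{k_0}^{-\alpha/4}$.

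For the induction step, assume $q_k\leqslant L_k^{-\alpha/4}$ and work on the event $F_{L_{k+1}}$, whose complement has probability at most $ce^{-cL_{k+1}^{1/2}}$ by Proposition \ref{S:p:exit_box}. On $A_{L_{k+1}}(v_++\varepsilon_{k+1})\cap F_{L_{k+1}}$, fix a witnessing starting point $y\in I_{L_{k+1}}$ and split the trajectory of $Z^y$ at the hitting times $\tau^y_{jL_k}$, $j=0,\ldots,l_k$, into $l_k$ consecutive sub-paths. Intersecting with the high-probability events of Proposition \ref{S:p:horizontal_bounds} applied to each sub-path confines each sub-path's direction to $[-\beta,\beta]$; combined with the requirement that the mean direction exceeds $v_++\varepsilon_{k+1}$, a standard averaging inequality produces a fraction at least $c_\beta\delta_k$ of sub-paths with individual direction at least $v_++\varepsilon_{k+1}-\delta_k=v_++\varepsilon_k$. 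For $k$ large, this guarantees at least two such bad sub-paths whose vertical separation exceeds $L_k$. The event that the $j$-th sub-path is bad rewrites as a $\Theta_{jL_k}^w$-shifted event in the sense of Proposition \ref{S:p:independence}, whose Markov-type conclusion bounds its quenched probability uniformly by $q_k$ regardless of the random starting point and accumulated history at time $\tau^y_{jL_k}$. On $F_{L_{k+1}}$, each bad sub-path is additionally confined to some translate of a box of type $B_{L_k}$ lying in $B_{L_{k+1}}$; Fact \ref{S:p:mixing} applied with $h=L_k/2$ (whose compatibility is ensured by Remark \ref{S:r:mixing_boxes}) to the two corresponding boxes then yields
\begin{equation*}
q_{k+1}\leqslant C_k\bigl(q_k^2+c\,L_k^{-\alpha}\bigr)+c\,e^{-cL_{k+1}^{1/2}},
\end{equation*}
where $C_k$ is the polynomial entropy obtained by summing over the starting point $y\in I_{L_{k+1}}$, the pair of vertical indices among the $l_k$ segments, and the horizontal positions of the two sub-paths within $B_{L_{k+1}}$. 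Since $L_{k+1}\leqslant L_k^{5/4}$, the target bound $L_{k+1}^{-\alpha/4}\geqslant L_k^{-5\alpha/16}$ is preserved provided $C_k\ll L_k^{3\alpha/16}$, which is precisely the margin afforded by $\alpha>12$.

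Finally, the interpolation from $(L_k)$ to general $H\in\NN^*$ is standard: for $L_k\leqslant H<L_{k+1}$, coupling the direction at height $H$ with that at height $L_k$ (via the ballisticity of Fact \ref{S:f:ball}) transfers the bound with only a polynomial loss absorbed into the final constant. The main obstacle lies in the induction step: contrary to the dynamic one-dimensional setting of \cite{BHT}, each sub-path begins at a random point carrying a non-trivial history, so Proposition \ref{S:p:independence} is essential to decouple a sub-path event from its past and to bound its probability uniformly in $(y,\Gamma)$ by $q_k$. A secondary technicality is the careful bookkeeping of the entropy $C_k$ against the polynomial mixing exponent $\alpha$, which is what dictates the quantitative threshold $\alpha>12$.
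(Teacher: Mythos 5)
Your proposal reproduces the overall renormalization architecture of the paper's proof (scales $L_k$, slowly increasing thresholds $v_++\varepsilon_k$, an averaging argument producing several bad sub-blocks, decoupling of two well-separated blocks via Fact \ref{S:p:mixing}, then interpolation), but it has two genuine gaps. First, the base case: knowing $\liminf_{H\to\infty}p_H(v_++\varepsilon/2)=0$ only gives smallness along \emph{some} sequence of heights, which has no reason to intersect $\{L_k\}$; you cannot "select $k_0$ with $p_{L_{k_0}}(v_++\varepsilon/2)\leqslant L_{k_0}^{-\alpha/4}$". The paper circumvents this by taking a good height $H\geqslant L_{k_0}$, setting $h_0=H/L_{k_0}$, and running the whole induction on the dilated scales $H_k=h_0L_k$ (which forces all constants in the induction step to be uniform in $h_0$).

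Second, and more seriously, the history-removal step. You assert that Proposition \ref{S:p:independence} bounds the probability that the $j$-th sub-path is bad "uniformly by $q_k$ regardless of the random starting point and accumulated history". That proposition only yields the bound $\sup_{\omega\in\mathcal{A}}\sup_{y,\Gamma}\PP^\omega(A^{y,\Gamma})$, a supremum over \emph{all} histories $\Gamma$, whereas $q_k=\PP(A_{L_k}(v_++\varepsilon_k))$ is the annealed probability of the \emph{zero-history} event: the induction hypothesis gives no control whatsoever over the history-$\Gamma$ versions. In addition, the sub-path event carries the random history $N^y_{\tau_{jL_k}}$ built from the walk's past in the lower box, so it is not measurable with respect to the upper box and Fact \ref{S:p:mixing} cannot be applied to it as stated. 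This is exactly the difficulty the paper resolves with cut lines: Assumption \ref{S:a:cut_line} and the event $\mathcal{G}_k$ guarantee that within vertical distance $H_k'$ above each height $jH_k$ the walk crosses a cut line at a point $\xi_j^y$, after which its trajectory coincides with that of the fresh, zero-history walk $Z^{\xi_j^y}$; this is why $I_H(w)$ is a rectangle of height $H'$ rather than a segment, and why the threshold increments contain the extra term $2\beta/H_k'$ absorbing the horizontal displacement accrued before the cut line is reached. Without this (or an equivalent) mechanism, the induction inequality $q_{k+1}\leqslant C_k(q_k^2+cL_k^{-\alpha})+\ldots$ does not follow.
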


The proof of Lemma \ref{S:p:estimates_on_p} is the goal of Section \ref{S:ss:proof_est}.

\begin{corollary}\label{S:c:v_-<v_+}
The two limiting directions satisfy $v_-\leqslant v_+.$
\end{corollary}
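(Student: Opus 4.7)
The plan is to argue by contradiction, using the deviation bounds from Lemma~\ref{S:p:estimates_on_p} together with the simple observation that the origin itself lies in $I_H$, so the random walk started at $o$ must contribute to at least one of the two events $A_H$ and $\tilde{A}_H$.

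Suppose for contradiction that $v_- > v_+$. Then one can pick $\varepsilon > 0$ small enough that $v_+ + \varepsilon < v_- - \varepsilon$. The key observation is that $o \in I_H(o)$, so $V_H^o$ is always defined (for $H$ at least $1$), and it is a single real number. Exactly one of the following must hold: either $V_H^o \geq v_+ + \varepsilon$, in which case the event $A_H(v_+ + \varepsilon)$ occurs; or $V_H^o < v_+ + \varepsilon$, in which case by our choice of $\varepsilon$ we have $V_H^o < v_- - \varepsilon$, and thus $V_H^o \leq v_- - \varepsilon$, so the event $\tilde{A}_H(v_- - \varepsilon)$ occurs. Consequently,
\begin{align*}
1 \leq \PP\bigl( A_H(v_+ + \varepsilon) \bigr) + \PP\bigl( \tilde{A}_H(v_- - \varepsilon) \bigr) = p_H(v_+ + \varepsilon) + \tilde{p}_H(v_- - \varepsilon).
\end{align*}

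Applying Lemma~\ref{S:p:estimates_on_p} to both terms, the right-hand side is bounded by $2\,\ucS{S:c:deviation}(\varepsilon)\,H^{-\alpha/4}$, which tends to $0$ as $H \to \infty$, contradicting the lower bound $1$. Therefore $v_- \leq v_+$.

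There is essentially no obstacle here beyond invoking Lemma~\ref{S:p:estimates_on_p}: all the work is hidden in those deviation estimates, and the corollary itself is a one-line union-bound consequence.
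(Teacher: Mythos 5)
Your proof is correct and follows essentially the same route as the paper: argue by contradiction, note that the walk started at $o\in I_H$ forces $A_H(v_++\varepsilon)\cup\tilde A_H(v_--\varepsilon)$ to have full probability, and let the deviation bounds of Lemma \ref{S:p:estimates_on_p} drive the sum of the two probabilities to $0$. The paper merely makes the specific choice $\varepsilon=\frac{v_--v_+}{2}$ so that the two thresholds coincide at the midpoint, which is an immaterial difference.
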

\begin{proof}
    We now prove that this is indeed a corollary of Lemma \ref{S:p:estimates_on_p}. We argue by contradiction and assume that $v_->v_+$. Let $v=\frac{v_++v_-}{2}\in (v_+,v_-)$ and $\varepsilon=\frac{v_--v_+}{2}>0$. Using \eqref{S:deviation bounds_estimates} and \eqref{S:deviation bounds_estimates2}, we have, for every $H\in\NN^*$, $p_H(v)\leqslant \ucS{S:c:deviation}(\varepsilon) H^{-\alpha/4}$ and $\tilde{p}_H(v)\leqslant \ucS{S:c:deviation}(\varepsilon) H^{-\alpha/4}$. Therefore, $1\leqslant p_H(v)+\tilde{p}_H(v)\leqslant 2\ucS{S:c:deviation}(\varepsilon) H^{-\alpha/4}\xrightarrow[H\to\infty]{}0$, which is a contradiction.
\end{proof}

This corollary can be improved as follows.
\begin{lemma}\label{S:l:v_-=v_+}
We have $v_-=v_+$. We call this quantity $\nu$.
\end{lemma}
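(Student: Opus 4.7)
The plan is to argue by contradiction. Corollary \ref{S:c:v_-<v_+} already gives $v_-\leqslant v_+$, so I assume $v_+ > v_-$ and aim for a contradiction. Pick any $v\in(v_-,v_+)$ and $\eta>0$ with $v-\eta>v_-$ and $v+\eta<v_+$; by Remark \ref{S:r:monotonicity_v_+} and the definitions of $v_\pm$, $\liminf_H p_H(v+\eta)>0$ and $\liminf_H \tilde p_H(v-\eta)>0$, so there is $\delta>0$ with $p_H(v+\eta)\geqslant \delta$ and $\tilde p_H(v-\eta)\geqslant \delta$ for all $H$ sufficiently large. In words, with probability at least $\delta$ some walk started in $I_H$ reaches height $H$ with direction $\geqslant v+\eta$, and with probability at least $\delta$ some walk in $I_H$ reaches height $H$ with direction $\leqslant v-\eta$.

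The pivot of the argument should be the barrier property (Proposition \ref{S:p:barrier_property} announced in the introduction): under the coupling of Section \ref{S:ss:coupling}, two walks started at the same height with $\pi_1(y_1)<\pi_1(y_2)$ preserve this horizontal order forever, except on the rare event that one walk goes around the other from below, which is controlled by Assumption \ref{S:a:cut_line}. The natural goal is therefore to produce in a single realization a right-going atypical walk from some $y_1$ and a left-going atypical walk from some $y_2\in I_H$ with $\pi_1(y_1)<\pi_1(y_2)$, because then the horizontal ordering together with Proposition \ref{S:p:horizontal_bounds} forces
\begin{align*}
H(v+\eta)+\pi_1(y_1)-o(H)\;\leqslant\; H(v-\eta)+\pi_1(y_2)+o(H),
\end{align*}
that is, $\pi_1(y_2)-\pi_1(y_1)\geqslant 2\eta H-o(H)$, which contradicts $I_H$ having width $H$ as soon as $2\eta>1$. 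For general small $\eta$ one is led to set up the two existence events in sub-boxes of opposite halves of $I_H$ at an appropriate scale, or to iterate the argument via the multi-scale renormalization of Section \ref{S:sss:renormalization_scheme}, amplifying the minimal forced horizontal separation until it exceeds the width of the current starting box.

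The delicate point is coupling the two existence events in the same realization, since $I_H$ has width $H$ and the corresponding events live in the same bottom-level box, so Fact \ref{S:p:mixing} does not apply directly. I plan to break this by relocating one event vertically: by translation invariance, the atypical left-going event still has probability at least $\delta$ when reset in $I_H(w)$ with $\pi_2(w)$ of order $H$, and the Markov-type Proposition \ref{S:p:independence} chains it to the first event at the cost of a supremum over starting points and histories. On the vertically separated boxes $B_H(o)$ and $B_H(w)$, Fact \ref{S:p:mixing} then yields a joint probability at least $\delta^2-\ucS{S:c:mixing}\,h^{-\alpha}$ for $h\sim H$. The main obstacle will be balancing simultaneously three competing errors, namely the mixing error $h^{-\alpha}$, the polynomial entropy from the possible choices of the two starting points $y_1,y_2$, and the cut-line failure probability from Assumption \ref{S:a:cut_line}, against the uniform-in-$H$ lower bound $\delta$; the condition $\alpha>12$ of Assumption \ref{S:a:mixing_env} is precisely calibrated so that this accounting survives the renormalization cascade and genuinely activates the barrier property in the face of the non-trivial history accumulated by the lower walk when reaching the higher box.
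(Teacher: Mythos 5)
The heuristic you articulate — forcing a right-moving and a left-moving walk into the same region and invoking the barrier property to contradict the box width — is indeed the motor of the paper's proof, but the mechanism you propose for coupling the two atypical events does not work, and filling that hole is where essentially all the work of Section \ref{S:ss:part2} lies. Your plan is to move the left-going event to $I_H(w)$ with $\pi_2(w)$ of order $H$ so that $B_H(o)$ and $B_H(w)$ are vertically separated and Fact \ref{S:p:mixing} applies. But vertical separation is exactly what prevents any interaction: the right-going walk from $y_1$ lives (with high probability) in $B_H(o)$, the left-going walk from $y_2$ lives in $B_H(w)$, and the barrier argument requires both walks to visit a common vertical strip and, moreover, requires a particular horizontal order $\pi_1(y_1)<\pi_1(y_2)$ at the moment their strips begin to overlap. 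Once you decouple, Proposition \ref{S:p:barrier_property} has nothing to say about the two paths, and no contradiction follows. There is a second gap even in the undecoupled version: the events $A_H(v+\eta)$ and $\tilde A_H(v-\eta)$ only assert \emph{some} $y_1,y_2\in I_H$, with no control of their horizontal order, so the ``forced separation'' $\pi_1(y_2)-\pi_1(y_1)\geqslant 2\eta H$ only holds for the favorable ordering, which may simply fail. Finally, Proposition \ref{S:p:independence} is a one-sided (upper) bound and cannot be used to lower-bound a joint probability in the way you sketch.

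The paper's actual route closes both gaps simultaneously by introducing \emph{traps} (Definition \ref{S:d:traps}): a left-going sample path deliberately anchored at $z_w=w+(\delta H+4\beta H',2H')$, i.e.\ a fixed known offset \emph{to the right of} $w$, so that any walk arriving near $w$ with a bounded history is automatically to the left of the trap when the barrier argument (Lemma \ref{S:l:delay_traps}) is applied. Then, because a single trap only occurs with a small uniform probability (Lemma \ref{S:l:bound_trap}, which requires the liminf to be a limit along the right subsequence — itself nontrivial), the paper upgrades to \emph{threats} along the $v_+$-direction line (Proposition \ref{S:p:proba_threats}) and to a \emph{density} statement (Proposition \ref{S:p:density}), both via renormalization, guaranteeing that the right-going walk encounters a barrier in at least half of the sub-strips it crosses. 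Each encounter costs it a fixed fraction $\delta/(2r)$ of speed, and the accumulated deficit pushes its direction strictly below $v_+$, producing the contradiction. Your proposal does correctly anticipate that $\alpha>12$ is tuned against the $h^{-\alpha}$ mixing cost, the entropy of starting points, and the cut-line failure, but it does not identify the conceptual device — a barrier offset by a deterministic margin, placed densely along the walk's typical trajectory — that actually makes the barrier property bite.
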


The proof of Lemma \ref{S:l:v_-=v_+} is the goal of Section \ref{S:ss:part2} and can be found more precisely in Section \ref{S:ss:final_proof}. For now, let us prove Lemma \ref{S:l:LLN_spatial_subsequence} as a consequence of Lemmas \ref{S:p:estimates_on_p} and \ref{S:l:v_-=v_+}.

\begin{proof}[Proof of Lemma \ref{S:l:LLN_spatial_subsequence}]
Let $\varepsilon>0$. Combining Lemma \ref{S:p:estimates_on_p} with Lemma \ref{S:l:v_-=v_+}, we have, for every $H\in\NN^*,$
$$\PP\left(\left\vert\frac{Z_{\tau_H}}{H}-\nu\right\vert\geqslant\varepsilon\right)\leqslant  2\,\ucS{S:c:deviation}(\varepsilon)\,H^{-\alpha/4},$$
which is summable, since $\alpha>4$. Therefore, Borel-Cantelli's lemma ensures that $$\PP\text{-almost surely,}\;\;\;\;\frac{Z_{\tau_H}}{H}\xrightarrow[H\to\infty]{} \nu,$$
concluding the proof of Lemma \ref{S:l:LLN_spatial_subsequence}.
\end{proof}

\subsection{Deviation bounds: proof of Lemma \ref{S:p:estimates_on_p}}\label{S:ss:proof_est}

\subsubsection{Ideas of the proof}
\ncSk{S:k:first_scale}
Let us first give some heuristic insight on how the proof is going to unfold.
\begin{itemize}[leftmargin=*]
    \item We will only show \eqref{S:deviation bounds_estimates}; \eqref{S:deviation bounds_estimates2} is shown in the same way, the proof being symmetric.
    \item We fix $v>v_+$. The road map to show the estimate on $p_H(v)$ is given by the renormalization method explained in Section \ref{S:sss:renormalization_scheme}, with a sequence of scales given by $(h_0L_k)_{k\geqslant \ucSk{S:k:first_scale}}$, where $h_0$ and $\ucSk{S:k:first_scale}$ will have to be chosen properly, depending on $v$. In the induction that will give an estimate on this sequence of scales, the choice of $\ucSk{S:k:first_scale}$ and the definition of $(L_k)_{k\in\NN}$ will be instrumental in the induction step, while $h_0$ is chosen for the base case to work.
    \item We are going to work with the sequence of events $(A_{h_0L_k}(v_k))_{k\geqslant \ucSk{S:k:first_scale}}$ with an appropriate choice of $(v_k)_{k\geqslant \ucSk{S:k:first_scale}}$. The goal is to show that with good probability, on $A_{h_0L_{k+1}}(v_{k+1})$, we can find events $A_{h_0L_k,w_1}(v_k)$ and $A_{h_0L_k,w_2}(v_k)$ with some base points $w_1,w_2$ located on a grid whose cardinality does not depend on $h_0$. The challenge is that we asked those two events to have everywhere-zero histories. One way to find them is to look for the two starting points $y_1$ and $y_2$ (from the definitions of $A_{h_0L_k,w_1}(v_k)$ and $A_{h_0L_k,w_2}(v_k)$) on cut lines that we ask to be at vertical distance at most $(h_0L_k)^{1/2}$ of two points $w_1$ and $w_2$ on our grid. This is the whole reason why in our paper, $I_H(w)$ is a rectangle, instead of being a horizontal interval as in \cite{BHT}.
\end{itemize}

\subsubsection{Estimate along a subsequence}
We are first going to show an estimate similar to \eqref{S:deviation bounds_estimates} along a subsequence. More precisely, we are going to show the following result.

\begin{lemma}\label{S:l:estimate_to_show}
Let $v>v_+$. There exist $\ucSk{S:k:first_scale}=\ucSk{S:k:first_scale}(v)\in\NN$ and $h_0=h_0(v)\in [1,\infty)$ such that for every $k\geqslant \ucSk{S:k:first_scale}$, $h_0L_k\in\NN$ and
\begin{align}\label{S:e:estimate_we_want}
p_{h_0L_k}(v)\leqslant L_k^{-\alpha/2}.\end{align}
\end{lemma}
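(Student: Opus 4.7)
The plan is to apply the multi-scale renormalization scheme of Section \ref{S:sss:renormalization_scheme} to the family of events $(A_{h_0L_k}(v_k))_{k\geq \ucSk{S:k:first_scale}}$, where $(v_k)$ is a carefully chosen increasing sequence with $v_k \leq v$ for every $k$. Because $v\mapsto p_H(v)$ is non-increasing, the bound $p_{h_0L_k}(v_k)\leq L_k^{-\alpha/2}$ immediately implies the target inequality \eqref{S:e:estimate_we_want}. I would fix $v_\infty=(v_++v)/2\in(v_+,v)$ and define an increasing sequence $v_k\uparrow v_\infty$ by $v_{k+1}-v_k=C_0/l_k$, where $C_0=C_0(\beta)>0$ is the constant produced by the averaging step below. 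Because $l_k\sim L_k^{1/4}$ grows doubly-exponentially, $\sum_k 1/l_k$ is finite and $\ucSk{S:k:first_scale}$ can be taken so large that $v_{\ucSk{S:k:first_scale}}\in(v_+,v_\infty)$. Then, since $v_{\ucSk{S:k:first_scale}}>v_+$ forces $\liminf_{H\to\infty}p_H(v_{\ucSk{S:k:first_scale}})=0$, I can pick $h_0\geq 1$ with $h_0L_{\ucSk{S:k:first_scale}}\in\NN$ and $p_{h_0L_{\ucSk{S:k:first_scale}}}(v_{\ucSk{S:k:first_scale}})\leq L_{\ucSk{S:k:first_scale}}^{-\alpha/2}$; integrality propagates along the induction because $L_{\ucSk{S:k:first_scale}}$ divides $L_k$ for every $k\geq \ucSk{S:k:first_scale}$.

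Given the induction hypothesis $p_{h_0L_k}(v_k)\leq L_k^{-\alpha/2}$, the induction step unfolds as follows. On $A_{h_0L_{k+1}}(v_{k+1})$ there is some $y\in I_{h_0L_{k+1}}$ whose walk reaches height $h_0L_{k+1}$ with horizontal displacement at least $v_{k+1}h_0L_{k+1}$. Splitting this vertical range into $l_k$ strata of height $h_0L_k$ and letting $d_j$ denote the horizontal displacement in the $j$-th stratum, I intersect with the high-probability event $\{\,|d_j|\leq \beta h_0L_k\text{ for every }j\,\}$ provided by Proposition \ref{S:p:horizontal_bounds} applied via the Markov-type estimate of Proposition \ref{S:p:independence}. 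Under this cap, $\sum_j d_j\geq l_k v_{k+1}h_0L_k$ together with $v_{k+1}-v_k>C_0/l_k$ forces at least three indices $j_1<j_2<j_3$ with $d_{j_i}\geq v_k h_0L_k$; retaining $j_1$ and $j_3$ gives two strata with an index gap of at least $2$, hence two vertically separated regions.

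The delicate step is to re-express what happens in each retained stratum as an occurrence of an event $A_{h_0L_k,w}(v_k)$ with \emph{zero} initial history, so that Fact \ref{S:p:mixing} can be applied to decouple them. For this I invoke Assumption \ref{S:a:cut_line} to locate, with probability at least $1-c^{-1}e^{-c(h_0L_k)^{1/2}}$, a cut line for the sub-walk within vertical distance $\lceil(h_0L_k)^{1/2}\rceil$ above the bottom of each stratum $j_i$. Once the walk has crossed this cut line at a point $w_i^\star$, it never returns below it, so its future transitions read only uniform variables attached to sites at or above the cut line, which have not been consulted before; this is exactly why $I_H(w)$ was defined as a \emph{rectangle} of vertical thickness $H'$, namely so that it absorbs the fluctuation of the cut-line height. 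Rounding each $w_i^\star$ to a point $w_i$ on a deterministic grid of unit spacing inside $B_{h_0L_{k+1}}$, and using Proposition \ref{S:p:independence} to replace the sub-walks by suprema over starting points and histories, the occurrence of the overall event implies the joint occurrence of $A_{h_0L_k,w_1}(v_k)$ and $A_{h_0L_k,w_3}(v_k)$, each measurable with respect to the respective boxes $B_{h_0L_k}(w_1)$ and $B_{h_0L_k}(w_3)$. A union bound over the polynomially many admissible pairs $(w_1,w_3)$ followed by Fact \ref{S:p:mixing} with $h=h_0L_k/2$ yields
\begin{equation*}
p_{h_0L_{k+1}}(v_{k+1})\;\leq\; C\,(h_0L_{k+1})^{O(1)}\Bigl(p_{h_0L_k}(v_k)^2+\ucS{S:c:mixing}(h_0L_k/2)^{-\alpha}\Bigr)+c^{-1}e^{-c(h_0L_k)^{1/2}}.
\end{equation*}
Using $L_{k+1}\leq L_k^{5/4}$, $\alpha>12$, and $\ucSk{S:k:first_scale}$ taken sufficiently large, a direct algebraic check closes the induction to $p_{h_0L_{k+1}}(v_{k+1})\leq L_{k+1}^{-\alpha/2}$.

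The main obstacle, as flagged in the bullet list preceding the statement, is the cut-line surgery in the previous paragraph: converting a sub-walk that \emph{a priori} carries the full history of the walker up to the corresponding height into an actual instance of $A_{h_0L_k,w}(v_k)$, defined from walks with empty history, while preserving the box-measurability required by the decoupling Fact \ref{S:p:mixing}. Both the rectangular shape of $I_H(w)$ and the Markov-type inequality \eqref{S:markov_type1} are precisely what allow this surgery to go through; without either of them the history accumulated at the stratum boundary would destroy the independence underlying the decoupling step.
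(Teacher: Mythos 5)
Your overall strategy coincides with the paper's: an increasing sequence $v_k\uparrow v_\infty\in(v_+,v)$ with increments of order $1/l_k$, a base case extracted from the $\liminf$ in the definition of $v_+$, a pigeonhole argument producing three good strata, cut lines to erase histories, and decoupling of two vertically separated boxes. The one place where your argument genuinely breaks is the entropy count. You round the cut-line points to a grid of \emph{unit} spacing inside $B_{h_0L_{k+1}}$, so your union bound runs over a number of pairs that is polynomial in $h_0L_{k+1}$, and your recursion reads $p_{h_0L_{k+1}}(v_{k+1})\leqslant C(h_0L_{k+1})^{O(1)}(p_{h_0L_k}(v_k)^2+\cdots)$. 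That constant depends on $h_0$, and this is fatal to the order of quantifiers: $\ucSk{S:k:first_scale}$ must be fixed \emph{before} $h_0$, because $h_0=H/L_{\ucSk{S:k:first_scale}}$ where $H$ is merely some scale, with no a priori upper bound, at which $p_H(v_{\ucSk{S:k:first_scale}})$ is small, as supplied by the $\liminf$. You therefore cannot ``take $\ucSk{S:k:first_scale}$ sufficiently large'' to absorb a factor $h_0^{O(1)}$: enlarging $\ucSk{S:k:first_scale}$ forces a new, again uncontrolled, $h_0$. Moreover, with $O(1)=4$ (which is what pairs of points of a unit grid in a two-dimensional box of side $\asymp h_0L_{k+1}$ give), the required inequality $C L_{k+1}^{4}L_k^{-\alpha}\leqslant L_{k+1}^{-\alpha/2}$ already fails for $\alpha$ close to $12$ even when $h_0=1$, since $L_{k+1}^{-4-\alpha/2}L_k^{\alpha}\approx L_k^{3\alpha/8-5}\to 0$.

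The fix, and what the paper does, is to take the reference grid at scale $H_k=h_0L_k$ itself: the set $\mathcal{C}_k$ of \eqref{S:CCC} has cardinality $O(l_k^2)$ \emph{uniformly in $h_0$}, because the rectangles $\mathcal{I}_{H_k}(w)$, $w\in\mathcal{C}_k$, tile the relevant slab and each cut-line point $\xi_j^y$ automatically lands in one rectangle $I_{H_k}(w)$ --- this is the second reason $I_H(w)$ is a rectangle of width $H$ and height $H'$, beyond absorbing the cut-line height, which you correctly identify. The resulting recursion $p_{H_{k+1}}(v_{k+1})\leqslant c\,l_k^4\left(p_{H_k}(v_k)^2+cL_k^{-\alpha}\right)$ has constants independent of $h_0$ and closes for $\ucSk{S:k:first_scale}$ large depending only on $v$. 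The rest of your argument (three indices by pigeonhole, keeping two with index gap at least $2$ to get vertical separation $2H_k$, the use of Propositions \ref{S:p:independence} and \ref{S:p:horizontal_bounds}, the cut-line surgery) matches the paper; the horizontal offset of order $\beta H_k'$ between the stratum entry point and the cut-line point, which the paper tracks through the intermediate directions $\bar v_k$, is indeed absorbable into your $C_0/l_k$ increments since $H_k'\geqslant L_k^{1/2}\gg l_k$.
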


\begin{proof}
We fix $v>v_+$ for the whole proof. Recall Definition \ref{S:d:scales}. We let $\ucSk{S:k:first_scale}=\ucSk{S:k:first_scale}(v)\in\NN$ be such that \begin{align}\label{S:e:condition_de_depart}
\displaystyle\sum_{k\geqslant \ucSk{S:k:first_scale}} \left(\displaystyle\frac{2\beta}{H_k'}+\displaystyle\frac{6\beta}{l_k}\right)<\displaystyle\frac{v-v_+}{2}.
\end{align}
We also set $v_{\ucSk{S:k:first_scale}}=\frac{v+v_+}{2}$. Using Remark \ref{S:r:monotonicity_v_+}, note that since $v_{\ucSk{S:k:first_scale}}>v_+$, $$\liminf_{H\to\infty} p_H(v_{k_0})=0.$$ Therefore there exists $H\geqslant L_{\ucSk{S:k:first_scale}}$ such that $p_H(v_{\ucSk{S:k:first_scale}})\leqslant L_{\ucSk{S:k:first_scale}}^{-\alpha/2}$. Let $h_0=H/L_{\ucSk{S:k:first_scale}}\in [1,\infty).$ Using that $L_k$ is a multiple of $L_{\ucSk{S:k:first_scale}}$ for any $k\geqslant \ucSk{S:k:first_scale}$, we have $h_0L_k\in\NN$ for all $k\geqslant \ucSk{S:k:first_scale}$, and
\begin{align*}
p_{h_0 L_{\ucSk{S:k:first_scale}}}(v_{k_0})\leqslant L_{\ucSk{S:k:first_scale}}^{-\alpha/2}.
\end{align*}
Now that $h_0$ is fixed, we let
\begin{align}\label{S:n:H_k}
    H_k=h_0L_k \text{ for all $k\geqslant k_0$}.
\end{align} Recall notation $H_k'$ from \eqref{S:n:H'}. We now define a sequence $(v_k)_{k\geqslant \ucSk{S:k:first_scale}}$ by setting, for $k\geqslant \ucSk{S:k:first_scale}$,
    \begin{align*}
    \left\{\begin{array}{l} \bar{v}_k=v_k+\displaystyle\frac{2\beta}{H_k'};\vspace{3pt}\\ v_{k+1}=\bar{v}_k+\displaystyle\frac{6\beta}{l_k}.\end{array}\right.\end{align*}

This definition combined with \eqref{S:e:condition_de_depart} and the fact that $h_0\geqslant 1$ ensures that $v_k\xrightarrow[k\to\infty]{\nearrow} v_\infty<v.$ Therefore, using Remark \ref{S:r:monotonicity_v_+}, in order to show \eqref{S:e:estimate_we_want}, we need only show that
\begin{align}\label{S:e:estimate_to_show} \forall k\geqslant \ucSk{S:k:first_scale},\;\;p_{H_k}(v_k)\leqslant L_k^{-\alpha/2}.\end{align}
The rest of the proof is dedicated to proving \eqref{S:e:estimate_to_show}.
\ncS{S:c:cardinality}
\paragraph{Localization at scale $L_k$.}
We first need to define an event $\mathcal{F}_k$ that guarantees that the random walks starting in $I_{H_{k+1}}$ will stay in $B_{H_{k+1}}$ and that their horizontal behaviours at scale $H_k$ are properly bounded.

Recall Definitions \eqref{S:d:event_D} and \eqref{S:d:event_F}. Let \begin{align}\label{S:e:eventF}
\mathcal{F}_k=F_{H_{k+1}}\cap \bigcap_{y\in I_{H_{k+1}}}\bigcap_{j=0}^{l_k-1}\; \Theta_{jH_k} D_{H_k}^y.\end{align}
For each $y$ and $j$, in order to bound $\PP\left((\Theta_{jH_k} D_{H_k}^y)^c\right)$, we use Proposition \ref{S:p:independence}. Using Proposition \ref{S:p:horizontal_bounds}, for every $\omega\in\mathcal{A}$, $z\in\ZZ^2$ and $\Gamma\in\mathcal{H}$, we have $\PP^\omega\left((D_{H_k}^{z,\Gamma})^c\right)\leqslant \ucS{S:c:ball}^{-1} e^{-\ucS{S:c:ball} H_k},$ which is uniform in $\omega$, $z$ and $\Gamma.$ So, by Proposition \ref{S:p:independence}, for every $y$ and $j$, we have $$\PP\left((\Theta_{jH_k} D_{H_k}^y)^c\right)\leqslant \ucS{S:c:ball}^{-1} e^{-\ucS{S:c:ball} H_k}.$$ In the end, using union bounds and Proposition \ref{S:p:exit_box},
\begin{align}\label{S:e:bound_F}
    \PP(\mathcal{F}_k^c)\leqslant \ucS{S:c:box}^{-1} e^{-\ucS{S:c:box} H_{k+1}^{1/2}}+H_{k+1} H_{k+1}' l_k \ucS{S:c:ball}^{-1} e^{-\ucS{S:c:ball} H_k}\leqslant c^{-1} e^{-cH_k^{1/2}},
\end{align}
where $c>0$ does not depend on $h_0$ and $\ucSk{S:k:first_scale}$.

\paragraph{Link between scales $H_k$ and $H_{k+1}$.}
Let us fix $k\geqslant \ucSk{S:k:first_scale}$. We are now moving on to the crucial idea of the proof: on event $A_{H_{k+1}}(v_{k+1})$, which is observable at scale $H_{k+1}$, several similar events occur at scale $H_k$. We first define a set of reference points in order to control the entropy when going from scale $H_{k+1}$ to scale $H_k$.

We define the set
\begin{align}\label{S:CCC}
    \CCC_k=\{(iH_k,jH_k)\in H_k \ZZ^2,\,-\lceil\beta l_k\rceil\leqslant j<\lceil(\beta+1)l_k\rceil,\,-\lfloor H_{k+1}'/H_k\rfloor \leqslant j<l_k\}.
\end{align}
Recalling \eqref{S:d:interval_box} and that $B_{H_{k+1}}=B_{H_{k+1}}(o)$, this set satisfies \begin{align}\label{S:d:CCC}
\bigcup_{w\in \CCC_k} \mathcal{I}_{H_k}(w)\supseteq B_{H_{k+1}}\cap \left(\ZZ\times H_k\ZZ\right),
\end{align}
where the union above is disjoint (note that boxes $B_{H_k}(w)$ with $w\in\CCC_k$ are not disjoint though). The cardinality of $\CCC_k$ can be bounded from above by $\ucS{S:c:cardinality} l_k^2$, where $\ucS{S:c:cardinality}>0.$

Roughly speaking, our goal is to show that up to an event of small probability, $A_{H_{k+1}}(v_{k+1})$ implies the existence of two points $w_1,w_2\in\CCC_k$ satisfying $|\pi_2(w_1)-\pi_2(w_2)|\geqslant 2H_k$ and such that $A_{H_k,w_1}(v_k)$ and $A_{H_k,w_2}(v_k)$ occur.

Let us fix $y\in I_{H_{k+1}}.$ For now, we claim to have the following inclusion of events:
\begin{align}\label{S:e:cascade}
\left\{V^y_{H_{k+1}}\geqslant v_{k+1}\right\}\,\cap\, \mathcal{F}_k
\subseteq \left\{\begin{array}{c}\text{there exist three $j\in\llbracket 1,l_k-1\rrbracket$}\\ \text{such that } X^y_{\tau_{(j+1)H_k}}\geqslant X^y_{\tau_{jH_k}}+\bar{v}_k H_k\end{array}\right\}.
\end{align}
Indeed, let us assume that $\mathcal{F}_k$ occurs and that $V^y_{H_{k+1}}\geqslant v_{k+1}$. We argue by contradiction and assume that $X^y_{\tau_{(j+1)H_k}}\geqslant X^y_{\tau_{jH_k}}+\bar{v}_kH_k$ for at most two $j\in\llbracket 1,l_k-1\rrbracket$. Now, the horizontal displacement of $Z^y$ between times $0$ and $\tau^y_{H_{k+1}}$ is the sum of $l_k$ horizontal displacements, $l_k-3$ of which we can now bound by $\bar{v}_kH_k$, and the three remaining ones can be bounded using $\Theta_{jH_k} D_{H_k}^y$ for the right $j$'s. More precisely, using $\mathcal{F}_k$,
    \begin{align*}
        X^y_{\tau_{H_{k+1}}}
        &=X^y_{\tau_{H_k}}+\sum_{j=1}^{l_k-1} \left(\pi_1(X^y_{\tau_{(j+1)H_k}})-\pi_1(X^y_{\tau_{jH_k}})\right)\\
        &<\pi_1(y)+(l_k-3)\bar{v}_kH_k+3\beta H_k\\
        &=\pi_1(y)+\left(\bar{v}_k+\frac{3(\beta-\bar{v}_k)}{l_k}\right)H_{k+1}\\
        &<\pi_1(y)+\left(\bar{v}_k+\frac{6\beta}{l_k}\right)H_{k+1}\\
        &=\pi_1(y)+v_{k+1}H_{k+1},
    \end{align*}
where in the last inequality, we used bounds \eqref{S:e:bounds_on_v_+} to get that $\bar{v}_k>v_k>v_+\geqslant -\beta$. This amounts to saying that $V^y_{H_{k+1}}<v_{k+1}$, which contradicts our assumption, thus concluding the proof of \eqref{S:e:cascade}.

Now, note that since $F_{H_{k+1}}\subseteq \mathcal{F}_k$, on the latter event we know that for every $j\in\llbracket 1,l_k-1\rrbracket$, $Z^y_{\tau_{jH_k}}\in B_{H_{k+1}}$. Furthermore $Y^y_{\tau_{jH_k}}\in H_k\ZZ$ by construction (this is not true for $j=0$, which is why we left it out). Therefore, $Z^y_{\tau_{jH_k}}$ belongs to $\mathcal{I}_{H_k}(w)$ for some $w\in\CCC_k$, using \eqref{S:d:CCC}. The issue however is that in \eqref{S:e:cascade}, events $\left\{X^y_{\tau_{(j+1)H_k}}\geqslant X^y_{\tau_{jH_k}}+\bar{v}_kH_k\right\}$ implicitly feature a non-zero history $N^y_{\tau_{jH_k}}$, while our goal is to get zero-history events of the form $A_{H_k,w}(v_k)$.


\paragraph{Removal of histories.} In order to make the histories disappear, we use cut lines as defined before Assumption \ref{S:a:cut_line}, which requires defining a new event of large probability that will fulfill the technical requirements for the rest of the argument, namely that we can find cut lines close enough, that before then the random walks do not go too far horizontally, and that they all stay in boxes allowing us to use Fact \ref{S:p:mixing}. Recall notation $H_k'$ defined in \eqref{S:n:H'}. We let
\begin{align}\label{S:e:eventG}
\mathcal{G}_k=\displaystyle\bigcap_{y\in I_{H_{k+1}}}\,\displaystyle\bigcap_{j=1}^{l_k-1}\,\left(\Theta_{jH_k} D_{H_k'}^y\cap \left\{\Xi(\Theta_{jH_k} Z^y)<H_k'\right\}\right)\cap\bigcap_{w\in\CCC_k} F_{H_k}(w).
\end{align}
In order to control the probability of $\mathcal{G}_k$, we use Proposition \ref{S:p:independence} again, as well as Proposition \ref{S:p:exit_box} and Assumption \ref{S:a:cut_line}. Using a union bound, we have
\begin{align}
    \PP\left(\mathcal{G}_k^c\right)
    &\leqslant H_{k+1}H_{k+1}' l_k \,\sup_{y,j}\left[\PP\left((\Theta_{jH_k} D_{H_k'}^y)^c\right)+\PP\left(\Xi(\Theta_{jH_k} Z^y)\geqslant H_k'\right)\right]+\ucS{S:c:cardinality}l_k^2 \ucS{S:c:box}^{-1}e^{-\ucS{S:c:box} H_k^{1/2}}\nonumber\\
    &\leqslant H_{k+1}H_{k+1}' l_k \left(\ucS{S:c:ball}^{-1} e^{-\ucS{S:c:ball} H_k^{1/2}} + \ucS{S:c:cut_line}^{-1} e^{-\ucS{S:c:cut_line} H_k^{1/2}} \right)+ c^{-1}e^{-c H_k^{1/2}}\nonumber\\
    &\leqslant c^{-1} e^{-c H_k^{1/2}},\label{S:e:bound_G}
\end{align}
where $c>0$ does not depend on $h_0$ and $\ucSk{S:k:first_scale}$.

Let $j\in\llbracket 1,l_k-1\rrbracket$, and let $\xi_j^y$ be the location of $Z^y$ on the first cut line reached after height $jH_k$, that is
\begin{align}\label{S:e:zeta}
\xi_j^y=\Theta_{jH_k} Z^y_{T(\Theta_{jH_k} Z^y)}
\end{align}
(recall notations from Definition \ref{S:d:cut_line}). Note that, by construction, we have the following equality (mind that the left-hand side features no history now!): $$Z^{\xi_j^y}_{\tau_{H_k,Z^y_{\tau_{jH_k}}}}=Z^y_{\tau_{(j+1)H_k}}.$$
Now, on $\mathcal{G}_k\,\cap\, \left\{X^y_{\tau_{(j+1)H_k}}\geqslant X^y_{\tau_{jH_k}}+\bar{v}_kH_k\right\}$, we have
\begin{align*}
    X^y_{\tau_{(j+1)H_k}}
    &\geqslant X^y_{\tau_{jH_k}}+\bar{v}_kH_k\\
    &\geqslant \pi_1(\xi_j^y)+\bar{v}_kH_k-\beta H_k'&\text{using $\Theta_{jH_k} D^y_{H_k'}$}\\
    &\geqslant \pi_1(\xi_j^y)+v_kH_k&\mbox{by definition of $\bar{v}_k$,}
\end{align*}
so, in other words, $V^{\xi_j^y}_{H_k,Z^y_{\tau_{jH_k}}}\geqslant v_k$. Using \eqref{S:e:cascade}, this means that we have found three points (given by $\xi_j^y$ for three values of $j\in\llbracket 1,l_k-1\rrbracket$) with the right lower bound on their directions and with everywhere-zero initial histories. Furthermore, on $\mathcal{F}_k\cap\mathcal{G}_k$, for $j\in\llbracket 1,l_k-1\rrbracket$, we have $\pi_2(\xi_j^y)<Y^y_{\tau_{jH_k}}+H_k'=jH_k+H_k'$. Therefore the $\xi_j^y$ are located in three rectangles $I_{H_k}(w_i)$ for $w_i\in\CCC_k$ satisfying $|\pi_2(w_i)-\pi_2(w_j)|\geqslant H_k$ for $i\neq j$. As a result,
\begin{align}\label{S:pourplustard}
\mathcal{F}_k\cap\mathcal{G}_k\cap A_{H_{k+1}}(v_{k+1})
&\subseteq \bigcup_{\underset{|\pi_2(w_1)-\pi_2(w_2)|\geqslant 2H_k}{w_1,w_2\in \mathcal{C}_k}}\begin{array}{c} (A_{H_k,w_1}(v_k)\,\cap\,F_{H_k}(w_1))\\\cap\,(A_{H_k,w_2}(v_k)\,\cap\,F_{H_k}(w_2)).\end{array}
\end{align}
Now, events $A_{H_k,w_1}(v_k)\,\cap\,F_{H_k}(w_1)$ and $A_{H_k,w_2}(v_k)\,\cap\,F_{H_k}(w_2)$ above are respectively measurable with respect to boxes $B_{H_k}(w_1)$ and $B_{H_k}(w_2)$, and these two boxes satisfy the assumptions of Fact \ref{S:p:mixing} with $h=H_k/2$, by Definition \ref{S:d:box} and the fact that $|\pi_2(w_1)-\pi_2(w_2)|\geqslant 2H_k$ (recall Remark \ref{S:r:mixing_boxes}). Therefore,
\ncS{S:c:end_proof}
\begin{align*}
    \PP\left(\mathcal{F}_k\cap\mathcal{G}_k\cap A_{H_{k+1}}(v_{k+1})\right)
    &\leqslant |\CCC_k|^2 \left(p_{H_k}(v_k)^2+\ucS{S:c:mixing}(H_k/2)^{-\alpha}\right)\\
    &\leqslant \ucS{S:c:cardinality}^2\,l_k^{4}\, \left(p_{H_k}(v_k)^2+\ucS{S:c:mixing}(H_k/2)^{-\alpha}\right).
\end{align*}
In the end, using bounds \eqref{S:e:bound_F} and \eqref{S:e:bound_G}, we get
\begin{align}\label{S:e:constant_end_proof}
    \PP(A_{H_{k+1}}(v_{k+1}))
    &\leqslant \ucS{S:c:cardinality}^2\,l_k^{4}\, \left(p_{H_k}(v_k)^2+\ucS{S:c:mixing}(H_k/2)^{-\alpha}\right)+\PP\left(\mathcal{F}_k^c\right)+\PP\left(\mathcal{G}_k^c\right)\nonumber\\
    &\leqslant \ucS{S:c:end_proof}\,l_k^{4}\left(p_{H_k}(v_k)^2+\ucS{S:c:end_proof} L_k^{-\alpha}\right),
\end{align}
for some constant $\ucS{S:c:end_proof}>0$ that does not depend on $h_0$ and $\ucSk{S:k:first_scale}$ (to which we gave a name because we will need it again for the proof of Proposition \ref{S:p:density} at the end of our paper). By induction, we can conclude that if $p_{H_k}(v_k)\leqslant  L_k^{-\alpha/2},$ then
\begin{align*}
    \frac{p_{H_{k+1}}(v_{k+1})}{L_{k+1}^{-\alpha/2}}\leqslant c\,L_{k+1}^{\alpha/2}\,l_k^{4}\,L_k^{-\alpha}\leqslant c\,L_k^{\frac{-3\alpha+8}{8}},
    \end{align*}
for a well-chosen constant $c>0$ that does not depend on $h_0$ and $k_0$. Since $\alpha>3$, up to taking an even larger $\ucSk{S:k:first_scale}$ (independently on $h_0$), we can assume that this is at most $1$, which concludes the induction and the proof of estimate \eqref{S:e:estimate_to_show}.
\end{proof}

\subsubsection{Interpolation}\label{S:sss:interpolation}
\ncSk{S:k:first_scale_large}\ncSk{S:k:scale_interpolation}
We now want to interpolate \eqref{S:e:estimate_we_want} to show Lemma \ref{S:p:estimates_on_p}. Let $v>v_+$. Recall that constants $h_0$ and $\ucSk{S:k:first_scale}$ from Lemma \ref{S:l:estimate_to_show} actually depend on the direction in the estimate. We let $v'=\frac{v_++v}{2}$, $v''=\frac{v'+v}{2}$, $h_0=h_0(v')$ and define $\ucSk{S:k:first_scale_large}\geqslant \ucSk{S:k:first_scale}(v')$ such that
\begin{align}
& L_{\ucSk{S:k:first_scale_large}}^{1/10}>\frac{2\beta}{v-v'};\label{S:e:cond1}\\
& \frac{2\beta}{L_{\ucSk{S:k:first_scale_large}}^{1/2}}+\frac{2|v'|}{L_{\ucSk{S:k:first_scale_large}}^{1/10}}\leqslant v''-v'.\label{S:e:cond3}
\end{align}
Let $H\geqslant (h_0L_{\ucSk{S:k:first_scale_large}})^{11/10}$, and let $\ucSk{S:k:scale_interpolation}\geqslant \ucSk{S:k:first_scale_large}$ be such that 
\begin{align}\label{S:turlututu}
    (h_0L_{\ucSk{S:k:scale_interpolation}})^{11/10}\leqslant H<(h_0L_{\ucSk{S:k:scale_interpolation}+1})^{11/10}.
\end{align}
Since $\ucSk{S:k:scale_interpolation}\geqslant \ucSk{S:k:first_scale}(v')$ and $v'>v_+$, using Lemma \ref{S:l:estimate_to_show},
\begin{align}\label{S:chapeau_pointu}
    p_{h_0 L_{\ucSk{S:k:scale_interpolation}}}(v')\leqslant L_{\ucSk{S:k:scale_interpolation}}^{-\alpha/2}.
\end{align}

Recall \eqref{S:n:H_k}. Note that \eqref{S:turlututu} implies that $H'<H_{\ucSk{S:k:scale_interpolation}}$, therefore every $y\in I_H$ satisfies $\tau^y_{H_{\ucSk{S:k:scale_interpolation}}}>0.$ Now let $\bar{H}=\left\lfloor H/H_{\ucSk{S:k:scale_interpolation}}\right\rfloor\,H_{\ucSk{S:k:scale_interpolation}}$ be the last multiple of $H_{\ucSk{S:k:scale_interpolation}}$ before $H$.

In order to link what happens at scales $H$ and $H_{\ucSk{S:k:scale_interpolation}}$, similarly to \eqref{S:CCC}, we define a set
$$\hat{\CCC}=\left\{(iH_{\ucSk{S:k:scale_interpolation}},jH_{\ucSk{S:k:scale_interpolation}})\in H_{\ucSk{S:k:scale_interpolation}} \ZZ^2,\,-\lceil \beta H/H_{\ucSk{S:k:scale_interpolation}}\rceil\leqslant i<\lceil (\beta+1)H/H_{\ucSk{S:k:scale_interpolation}}\rceil,\, -\lfloor H'/H_{\ucSk{S:k:scale_interpolation}}\rfloor\leqslant j<\lfloor H/H_{\ucSk{S:k:scale_interpolation}}\rfloor\right\},$$
which satisfies 
\begin{align}\label{S:d:hatC}
\bigcup_{w\in \hat{\CCC}} \mathcal{I}_{H_{\ucSk{S:k:scale_interpolation}}}(w)\supseteq B_H\cap \left(\ZZ\times H_{\ucSk{S:k:scale_interpolation}}\ZZ\right).
\end{align}
and whose cardinality can be bounded from above by $c\,(H/H_{\ucSk{S:k:scale_interpolation}})^2$.

We will work with the following events:
\begin{align*}
&\mathcal{A}_1=\displaystyle \bigcap_{w\in \hat{\mathcal{C}}} A_{H_{\ucSk{S:k:scale_interpolation}},w}(v')^c;\\
&\mathcal{A}_2=\displaystyle\bigcap_{y\in I_H}\left\{X^y_{\tau_{H}}-X^y_{\tau_{\bar{H}}}<(v-v'')H-\beta H_{\ucSk{S:k:scale_interpolation}}\right\};\\
&\mathcal{F}= F_H\cap \displaystyle\bigcap_{y\in I_H} D^y_{ H_{\ucSk{S:k:scale_interpolation}}}\\
&\mathcal{G}=\displaystyle\bigcap_{y\in I_H}\displaystyle\bigcap_{j=1}^{\left\lfloor H/H_{\ucSk{S:k:scale_interpolation}}\right\rfloor-1} \left( \Theta_{jH_{\ucSk{S:k:scale_interpolation}}} D_{H_{\ucSk{S:k:scale_interpolation}}'}^y\,\cap\,\left\{\Xi\left(\Theta_{jH_{\ucSk{S:k:scale_interpolation}}}Z^y\right)<H_{\ucSk{S:k:scale_interpolation}}'\right\} \right).
\end{align*}
For any $y\in I_H$, using \eqref{S:d:hatC}, events $\mathcal{A}_1$ and $\mathcal{G}$ (as well as $F_H$) allow us to bound the displacement of $Z^y$ between times $\tau_{jH_{\ucSk{S:k:scale_interpolation}}}^y$ and $\tau_{(j+1)H_{\ucSk{S:k:scale_interpolation}}}^y$ for $j\in \llbracket 1,\lfloor H/H_{\ucSk{S:k:scale_interpolation}}\rfloor-1\rrbracket$. Indeed, the conditions on cut lines given by $\mathcal{G}$ allow us to find a point inside $I_{H_{\ucSk{S:k:scale_interpolation}}}(w)$, for some $w\in\hat{\CCC}$, for which we can use $A_{H_{\ucSk{S:k:scale_interpolation}},w}(v')^c$ given by $\mathcal{A}_1.$ More precisely, on $\mathcal{A}_1\cap \mathcal{F}\cap\mathcal{G}$ and for every $y\in I_H$, we have
\begin{align*}
    X^y_{\tau_{\bar{H}}}
    &=X^y_{\tau_{H_{\ucSk{S:k:scale_interpolation}}}}+\sum_{j=1}^{\lfloor H/H_{\ucSk{S:k:scale_interpolation}}\rfloor-1} \left( X^y_{\tau_{(j+1)H_{\ucSk{S:k:scale_interpolation}}}}-X^y_{\tau_{jH_{\ucSk{S:k:scale_interpolation}}}} \right)\\
    &\leqslant X^y_{\tau_{H_{\ucSk{S:k:scale_interpolation}}}}+\left(\lfloor H/H_{\ucSk{S:k:scale_interpolation}}\rfloor-1\right)\,\left(\beta H_{\ucSk{S:k:scale_interpolation}}'+v'H_{\ucSk{S:k:scale_interpolation}}\right)\\
    &\leqslant X^y_{\tau_{H_{\ucSk{S:k:scale_interpolation}}}}+\frac{H}{H_{\ucSk{S:k:scale_interpolation}}}(\beta H_{\ucSk{S:k:scale_interpolation}}'+v'H_{\ucSk{S:k:scale_interpolation}})+2|v'| H_{\ucSk{S:k:scale_interpolation}}\\
    &\leqslant X^y_{\tau_{H_{\ucSk{S:k:scale_interpolation}}}}+v''H,
\end{align*}
where in the last line we used \eqref{S:e:cond3} and \eqref{S:turlututu}. Therefore, on $\mathcal{A}_1\cap\mathcal{A}_2\cap\mathcal{F}\cap\mathcal{G}$, we have, for every $y\in I_H$,
\begin{align*}
    X^y_{\tau_{H}}-\pi_1(y)
    &=\left(X^y_{\tau_{H_{\ucSk{S:k:scale_interpolation}}}}-\pi_1(y) \right)+\left(X^y_{\tau_{\bar{H}}}-X^y_{\tau_{H_{\ucSk{S:k:scale_interpolation}}}} \right)+\left(X^y_{\tau_{H}}-X^y_{\tau_{\bar{H}}}\right)\\
    &< \beta H_{\ucSk{S:k:scale_interpolation}}+v''H + (v-v'')H-\beta H_{\ucSk{S:k:scale_interpolation}}=vH.
\end{align*}

As a result, $V_H^y<v$. This being true for any $y\in I_H$, this means that $A_H(v)$ cannot occur, so we have shown that
\begin{align}\label{S:e:inclusion_big}
A_H(v)\subseteq \mathcal{A}_1^c\cup \mathcal{A}_2^c \cup \mathcal{F}^c\cup\mathcal{G}^c.
\end{align}
Now, note that
\begin{align}\label{S:maj1}\PP\left(\mathcal{A}_1^c\right)\leqslant c \left(H/H_{\ucSk{S:k:scale_interpolation}}\right)^2 L_{\ucSk{S:k:scale_interpolation}}^{-\alpha/2},
\end{align}
using Lemma \ref{S:l:estimate_to_show}. Also, by \eqref{S:e:cond1} and the fact that $\ucSk{S:k:scale_interpolation}\geqslant \ucSk{S:k:first_scale_large}$, as well as \eqref{S:turlututu}, we have $(v-v')H>2\beta H_{\ucSk{S:k:scale_interpolation}}$, therefore \begin{align}
    \PP\left(\mathcal{A}_2^c\right)
    &\leqslant HH' \,\PP\left(\Theta_{\bar{H}}X^y_{\tau_{H-\bar{H},Z^y_{\tau_{\bar{H}}}}}-X^y_{\tau_{\bar{H}}}>\beta H_{\ucSk{S:k:scale_interpolation}}\right)\nonumber\\
    &\leqslant HH' \sup_{\omega\in\mathcal{A}}\,\sup_{z,\Gamma}\, \PP^\omega\left(\bigl(D_{H_{\ucSk{S:k:scale_interpolation}}}^{z,\Gamma}\bigr)^c\right)\nonumber\\
    &\leqslant HH' \ucS{S:c:ball}^{-1}e^{-\ucS{S:c:ball} H_{\ucSk{S:k:scale_interpolation}}},\label{S:maj2}
\end{align}
using Propositions \ref{S:p:horizontal_bounds} and \ref{S:p:independence}.
We also have
\begin{align}
&\PP(\mathcal{F}^c)\leqslant \ucS{S:c:box}^{-1} e^{-\ucS{S:c:box} H^{1/2}}+HH' \ucS{S:c:ball}^{-1} e^{-\ucS{S:c:ball} H_{\ucSk{S:k:scale_interpolation}}};\label{S:maj3}\\
&\PP(\mathcal{G}^c)\leqslant HH' \frac{H}{H_{\ucSk{S:k:scale_interpolation}}} \left( \ucS{S:c:ball}^{-1} e^{-\ucS{S:c:ball} H_{\ucSk{S:k:scale_interpolation}}^{1/2}}+\ucS{S:c:cut_line}^{-1} e^{-\ucS{S:c:cut_line} H_{\ucSk{S:k:scale_interpolation}}^{1/2}}\right).\label{S:maj4}
\end{align}
Using \eqref{S:turlututu}, we can see that the upper bounds given by \eqref{S:maj2}, \eqref{S:maj3} and \eqref{S:maj4} are all negligible with respect to that given by \eqref{S:maj1}, so, using \eqref{S:e:inclusion_big}, we get
\begin{align*}
\PP(A_H(v))
&\leqslant c \left(H/H_{\ucSk{S:k:scale_interpolation}}\right)^2 L_{\ucSk{S:k:scale_interpolation}}^{-\alpha/2}\\
&\leqslant c H^2 L_{\ucSk{S:k:scale_interpolation}}^{-\alpha/2}\\
&\leqslant c H^2 H^{-5\alpha/7} &\mbox{using \eqref{S:turlututu}}\\
&\leqslant c H^{-\alpha/4} &\mbox{using that $\alpha\geqslant 5$.}
\end{align*}
By adjusting $c$ to accommodate small values of $H$, this concludes the proof of Lemma \ref{S:p:estimates_on_p}.

\section{Equality of the limiting directions: proof of Lemma \ref{S:l:v_-=v_+}}\label{S:ss:part2}

The goal of this section is to show Lemma \ref{S:l:v_-=v_+}. The heuristic idea behind the proof is the following. The definition of $v_-$ ensures that the random walk often has a direction close to $v_-$. On the other hand, the probability that the random walk has a direction larger than $v_++\varepsilon$ (where $\varepsilon>0$ is fixed) decreases quickly, as was shown in Lemma \ref{S:p:estimates_on_p}. Therefore, assuming by contradiction that $v_+>v_-$, the moments when its direction stays close to $v_-$ may prevent it from reaching a direction close to $v_+$ in the future, which would be a contradiction. However, the random walk might be able to compensate by going faster than $v_++ \varepsilon(H)$ for some well-chosen $\varepsilon(H)>0$. This is why we need precise estimates, and these will be given by a notion of trap that we will introduce further on.

We start by presenting a major property of our model, which comes from the coupling that we built. The choice of the coupling is actually made in order to get this property, which is inspired by the arguments from \cite{BHT}. Roughly, it says that random walks block each other in some weak sense: a random walk can always get around another random walk, but this happens with low probability.

\subsection{Barrier property}\label{S:ss:barrier_property}

\begin{proposition}\label{S:p:barrier_property}
    Let $x_0,x_0'\in\ZZ^2$ with $\pi_1(x_0)<\pi_1(x_0')$. Let $H\in\NN^*$ such that $H>\pi_2(x_0')-\pi_2(x_0)$. Let $\Gamma\in\mathcal{H}$ such that $\mathrm{Supp}\,\Gamma \cap \{Z_n^{x_0'},n\leqslant \tau^{x_0'}_{H,x_0}\}=\emptyset.$ Assume that $\tau^{x_0,\Gamma}_{H,x_0}<\infty$ and $\tau^{x_0'}_{H,x_0}<\infty$ (which happens almost surely). Then at least one of the following scenarios occurs:
    \begin{enumerate}
        \item $Z^{x_0,\Gamma}$ visits $x_0'+\{0\}\times (-\infty,0)$ before time $\tau^{x_0,\Gamma}_{H,x_0}$;
        \item $Z^{x_0'}$ visits $x_0+\{0\}\times (-\infty,0)$ before time $\tau^{x_0'}_{H,x_0}$;
        \item We have $X^{x_0,\Gamma}_{\tau_{H,x_0}}\leqslant X^{x_0'}_{\tau_{H,x_0}}.$
    \end{enumerate}
    The statement is also true when we replace $x_0$, $x_0'$ and $\Gamma$ by random variables satisfying the same assumptions.
\end{proposition}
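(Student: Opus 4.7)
My plan is to argue by contrapositive: assume that both scenarios 1 and 2 fail, and deduce that scenario 3 must hold. So suppose $Z^{x_0,\Gamma}$ never visits $\{x_0'+(0,-j): j\in\NN^*\}$ before $\tau^{x_0,\Gamma}_{H,x_0}$, and $Z^{x_0'}$ never visits $\{x_0+(0,-j): j\in\NN^*\}$ before $\tau^{x_0'}_{H,x_0}$. Write $\gamma$ and $\gamma'$ for the traces of $Z^{x_0,\Gamma}$ and $Z^{x_0'}$ up to their respective hitting times of height $\pi_2(x_0)+H$, viewed as polygonal curves in $\RR^2$, and write $e,e'$ for their endpoints, both on the line $\{\pi_2=\pi_2(x_0)+H\}$. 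The aim is to show $\pi_1(e)\leqslant \pi_1(e')$.

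The first step is to build a topological barrier from $\gamma$. Extend $\gamma$ to a bi-infinite curve $\tilde L\subset \RR^2$ by prepending the downward ray $R=\{x_0+(0,-t):t\geqslant 0\}$ and appending the upward ray $R_e=\{e+(0,t):t\geqslant 0\}$. Although $\gamma$ may self-intersect, after a loop erasure one obtains a simple bi-infinite curve whose complement has exactly two unbounded connected components, the "left" and "right" sides of $\tilde L$. Using $\pi_1(x_0)<\pi_1(x_0')$ together with the failure of scenario 1 (so the open vertical ray below $x_0'$ does not meet $\gamma$, and it avoids $R$ and $R_e$ by horizontal/vertical considerations together with $\pi_2(x_0')<\pi_2(x_0)+H$), one checks that $x_0'$ lies in the right region: go straight down from $x_0'$ to a very negative height, then move rightward, staying to the right of $R$ since $\pi_1(x_0')>\pi_1(x_0)$. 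Now suppose for contradiction that scenario 3 also fails, so $\pi_1(e')<\pi_1(e)$; the same kind of argument using the upward ray from $e'$ places $e'$ in the left region.

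Then $\gamma'$ is a continuous path from the right to the left side of $\tilde L$, so it must cross $\tilde L$ somewhere. It cannot cross $R$ (failure of scenario 2) nor $R_e$ (since $\gamma'$ is stopped at height $\pi_2(e)$), hence it crosses along the $\gamma$ portion of $\tilde L$, at some vertex $x$ visited by both walks. This is where the coupling is crucial. Since $\mathrm{Supp}\,\Gamma$ is disjoint from the trace of $\gamma'$, one has $\Gamma(x)=0$, and so for each $i\geqslant 1$ both walks use the same uniform $U(x,i)$ on their $i$-th visit to $x$ and jump to the same neighbor. Consequently the sequence of outgoing edges from $x$ coincides for $\gamma$ and $\gamma'$ up to the minimum of their visit counts. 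A local case analysis on nearest-neighbor edge configurations at such shared vertices shows that matched outgoing directions force $\gamma'$ to depart $x$ either along an edge of $\gamma$ or on the same side of $\gamma$ as it arrived, so $\gamma'$ never genuinely flips sides of $\tilde L$ at $x$. This contradicts the existence of a crossing and proves that scenario 3 holds.

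The main obstacle is this last piece: the combined topological bookkeeping of making "the side of $\tilde L$" well defined when $\gamma$ self-intersects (via loop erasure or a winding-number argument), together with the case-by-case check that the coupling really does prevent $\gamma'$ from topologically crossing $\tilde L$ at a shared vertex. The nearest-neighbor assumption is essential here: longer jumps could hop over the barrier without ever visiting a shared vertex, whereas each step of $\gamma'$ crossing $\gamma$ must pass through a shared vertex, where the coupling constrains the jump and the planar order of the four incident edges allows an exhaustive local analysis.
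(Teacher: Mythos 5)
Your topological skeleton --- extend one walk's trace to a bi-infinite barrier $\tilde L$, place $x_0'$ on one side and (assuming scenario 3 fails) the other endpoint on the other side, conclude that the two traces must share a vertex, and then invoke the coupling there --- is the same overall strategy as the paper's, which likewise combines a Jordan-curve argument with the coupling at a common site. The gap is in the step you defer to a ``local case analysis,'' which is precisely the heart of the proof and does not work as stated. The coupling only synchronizes the $i$-th departure from a site $x$ when \emph{both} walks actually make an $i$-th visit to $x$; if $\gamma'$ reaches $x$ on a visit whose index exceeds the total number of visits of $\gamma$ to $x$ (or vice versa), its jump is read from a uniform variable $U(x,i)$ that the other walk never consults, so it is locally unconstrained and can exit on the far side of the barrier. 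Moreover, after loop-erasing $\gamma$ so that ``side of $\tilde L$'' is well defined, an edge of $\gamma$ out of $x$ need not be an edge of $\tilde L$: departing ``along an edge of $\gamma$'' may itself constitute a crossing of the erased curve. No analysis confined to a single shared vertex can exclude these situations, because whether a net crossing occurs depends on mismatched visit counts accumulated along entire excursions of the two walks.

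The paper resolves exactly this difficulty by a global mechanism. It proves a strengthened claim in which an arbitrary common history $\Gamma_0$ is added to both walks, shows by the Jordan argument that the traces meet, and then, when the meeting point lies on a loop of $Z'$, proves by induction (equations \eqref{S:e:first_equality}--\eqref{S:e:third_equality}) that both walks traverse that entire loop, each visiting its sites exactly once; the loop is then excised from both paths by incrementing $\Gamma_0$ on its sites, and the argument iterates until no loop of $Z'$ meets $Z$. Only in that zero-loop situation does a ``last common point'' argument --- using maximality of the intersection time rather than a case analysis of incident edges --- yield the contradiction. To close the gap you yourself flag as the main obstacle, you would need to supply this loop-removal induction (or an equivalent global bookkeeping of visit counts); the purely local edge analysis cannot replace it.
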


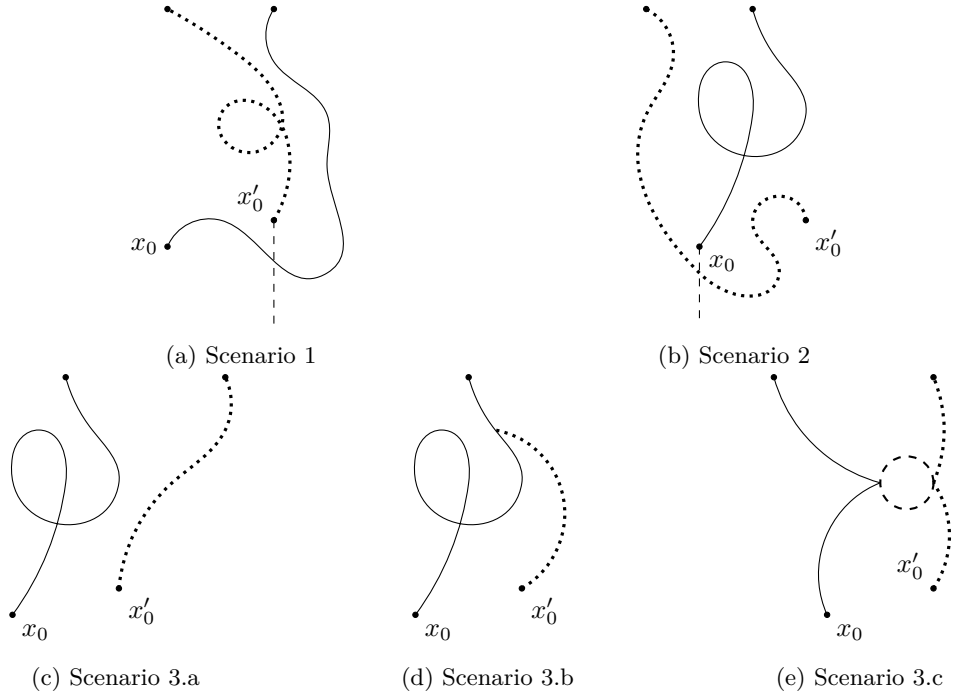
\begin{figure}[!h]
  \begin{center}
       \begin{subfigure}[b]{0.4\textwidth}
         \centering
        \begin{tikzpicture}[use Hobby shortcut, scale=0.7]
            \draw[dotted,very thick] (2,-1) .. (2.3,0) .. (2,1) .. (1,1) .. (2,0.5) .. (1.5, 2) .. (0,3) ;
            \draw[fill] (0,3) circle (.05);
            \draw[fill, above left] (2,-1) circle (.05) node {$x_0'$};
            \draw[dashed] (2,-1) -- (2,-3);
            \draw (0,-1.5) .. (1,-1) .. (3,-2) .. (3,0) .. (3,1) .. (2,2) .. (2,3);
            \draw[fill] (2,3) circle (.05);
            \draw[fill,left] (0,-1.5) circle (.05) node {$x_0$};
        \end{tikzpicture}
        \caption{Scenario 1}
     \end{subfigure}
     \begin{subfigure}[b]{0.4\textwidth}
         \centering
        \begin{tikzpicture}[use Hobby shortcut, scale=0.7]
            \draw[dotted,very thick] (2,-1) .. (1,-1) .. (1.5,-2) .. (0,-2) .. (-1,1) .. (-0.5,2) .. (-1,3) ;
            \draw[fill] (-1,3) circle (.05);
            \draw[fill, below right] (2,-1) circle (.05) node {$x_0'$};
            \draw[dashed] (0,-1.5) -- (0,-3) ;
            \draw (0,-1.5) .. (1,1) .. (0.5,2) .. (0,1.5) .. (2,1) .. (1.5,2) .. (1,3) ;
            \draw[fill] (1,3) circle (.05);
            \draw[fill,below right] (0,-1.5) circle (.05) node {$x_0$};
        \end{tikzpicture}
         \caption{Scenario 2}
     \end{subfigure}
     \begin{subfigure}[b]{0.3\textwidth}
         \centering
        \begin{tikzpicture}[use Hobby shortcut, scale=0.7]
            \draw[dotted,very thick] (2,-1) .. (3,1) .. (4,2) .. (4,3) ;
            \draw[fill] (4,3) circle (.05);
            \draw[fill, below right] (2,-1) circle (.05) node {$x_0'$};
            \draw (0,-1.5) .. (1,1) .. (0.5,2) .. (0,1.5) .. (2,1) .. (1.5,2) .. (1,3) ;
            \draw[fill] (1,3) circle (.05);
            \draw[fill,below right] (0,-1.5) circle (.05) node {$x_0$};
        \end{tikzpicture}
         \caption{Scenario 3.a}
     \end{subfigure}     
     \begin{subfigure}[b]{0.3\textwidth}
         \centering
        \begin{tikzpicture}[use Hobby shortcut, scale=0.7]
            \draw[dotted,very thick] (2,-1) .. (2.7,1) .. (1.5,2) ;
            \draw[fill, below right] (2,-1) circle (.05) node {$x_0'$};
            \draw (0,-1.5) .. (1,1) .. (0.5,2) .. (0,1.5) .. (2,1) .. (1.5,2) .. (1,3) ;
            \draw[fill] (1,3) circle (.05);
            \draw[fill,below right] (0,-1.5) circle (.05) node {$x_0$};
        \end{tikzpicture}
         \caption{Scenario 3.b}
     \end{subfigure}  
     \begin{subfigure}[b]{0.3\textwidth}
         \centering
        \begin{tikzpicture}[use Hobby shortcut, scale=0.7]
            \draw[dotted,very thick] (2,-1) .. (2.3,0) .. (2,1);
            \draw[dashed, thick] (2,1) .. (1.5,1.5) .. (1,1) .. (1.5,0.5) .. (2,1);
            \draw[dotted,very thick] (2,1) .. (2.2, 2) .. (2,3) ;
            \draw[fill] (2,3) circle (.05);
            \draw[fill, above left] (2,-1) circle (.05) node {$x_0'$};
            \draw (0,-1.5) .. (0.5,0.7) .. (1,1) ;
            \draw (1,1) .. (0,1.5) .. (-0.5,2) .. (-1,3);
            \draw[fill] (-1,3) circle (.05);
            \draw[fill,below right] (0,-1.5) circle (.05) node {$x_0$};
        \end{tikzpicture}
         \caption{Scenario 3.c}
     \end{subfigure}
        \caption{Illustrations of what can happen in each of the three scenarios of Proposition \ref{S:p:barrier_property}. In scenarios 1 and 2, one random walk goes around the other one. In scenarios 3, they end up with the same horizontal order because either they never meet nor go around each other (scenario 3.a), or they meet and stay together (scenario 3.b), or they meet on a loop that they both visit entirely before carrying on in the same situation as the initial one (scenario 3.c).}
        \label{S:f:barrier_property}
    \end{center}
\end{figure}

\begin{proof} Replacing $x_0$, $x_0'$ and $\Gamma$ by random variables does not change the proof, since the statement is deterministic. Even so, the proof is subtle. A lot of different things can happen, as is illustrated by Figure \ref{S:f:barrier_property}. We start by discussing the issues and ideas of the proof, in order to give some motivation.
\paragraph{Heuristics of the proof.} The main problem of the proof is that even if two random walks meet, they may not coalesce (\textit{i.e.} they may not stay together forever from then on), since the two random walks do not necessarily look at the same uniform variables every step of the way afterwards (for instance, if they are both located at a point that one has never visited before but the other one already has). However, we will show that when two random walks meet, either they coalesce (that is, at least restricted to the time intervals at hand), as in scenario 3.b), or they end up splitting up without having swapped their initial horizontal order, as in scenario 3.c). In the latter case, what actually happens is that the two random walks visit the same loop, and removing this loop is tantamount to adding the same history to both random walks. This prompts us to show a stronger version of the proposition by adding a common history $\Gamma_0$ to both random walks, which will allow us to apply our line of reasoning inductively by removing loops one by one.
\paragraph{Stronger claim.} We fix $x_0,x_0', H$ and $\Gamma$ as in the statement of the proposition, and we let $\Gamma_0\in\mathcal{H}$. 
From now on, we use simpler notations: $\tau$ for $\tau_{H,x_0}^{x_0,\Gamma+\Gamma_0}$, $\tau'$ for $\tau_{H,x_0}^{x_0',\Gamma_0}$, $Z$ for $Z^{x_0,\Gamma+\Gamma_0}\vert_{\llbracket 0,\tau\rrbracket}$ and $Z'$ for $Z^{x_0',\Gamma_0}\vert_{\llbracket 0,\tau'\rrbracket}$. Assume that both $\tau$ and $\tau'$ are finite. We argue by contradiction and assume that
\begin{align}\label{S:e:ass_for_contradiction}
\left\{\begin{array}{ll}
    \text{$Z$ does not visit $x_0'+\{0\}\times (-\infty,0)$}& (1) \\
    \text{$Z'$ does not visit $x_0+\{0\}\times (-\infty,0)$}& (2)\\
    \text{$X_\tau> X'_{\tau'}$}& (3)
\end{array}\right.\end{align}
We want to get to a contradiction from this, and then choosing $\Gamma_0=\mathbf{0}$ implies the desired result.

\paragraph{Loop removal algorithm.} We now define an algorithm allowing us to remove all loops from the paths of our random walks, so that we can focus on the zero-loop case later on. We consider a path defined by a parametrization $f:P\rightarrow \ZZ^2$, where $P$ is a bounded subset of $\NN$, satisfying for every $s,t\in P$, $|t-s|=1\Rightarrow \vert f(t)-f(s)\vert=1$, where $\vert\cdot\vert$ is the Euclidean norm on $\RR^2$. If $f$ is not injective, we define
    $$\begin{array}{l}
    T^{out}_1(f)=\min\left\{t\in P,\,f(t)\in \{f(s),s\in P,s<t\}\right\};\\
    T^{in}_1(f)=\min\left\{t\in P,\,f(t)=f(T_1^{out}(f))\right\};\\
    P_1(f)=\llbracket T_1^{in}(f),T_1^{out}(f)-1\rrbracket\cap P;\\
    L_1(f)=f(P_1(f)).
    \end{array}
    $$
    We call $L_1(f)$ the first loop of $f$. The times $T^{in}_1(f)$ and $T^{out}_1(f)$ are called the entry and exit times of $L_1(f)$. We define by induction the other loops of $f$, if they exist, by defining, for $i\geqslant 2$,
    $$\begin{array}{l}
    T^{out}_{i}(f)=T^{out}_1\left(f\vert_{P\setminus \cup_{j<i} P_j(f)}\right);\\
    T^{in}_i(f)=T^{in}_1\left(f\vert_{P\setminus \cup_{j<i} P_j(f)}\right);\\
    P_i(f)=\llbracket T_i^{in}(f),T_i^{out}(f)-1\rrbracket\cap\left(P\setminus\cup_{j<i} P_j(f)\right)\\
    L_i(f)=f(P_i(f)).
    \end{array}
    $$
    Mind that here we consider functions defined on subsets of $P$ that are not necessarily connected in $P$, which is why we did not assume $P$ to be connected in $\NN$ in the first place.
    
    If there are no more loops, we just set $P_i(f)=\emptyset$, $L_i(f)=\emptyset$ and $T_i^{in}(f)=T_i^{out}(f)=\infty$.
    
    We also define, for such a function $f$, its interpolated sample path as the curve in $\RR\times\ZZ\,\cup\,\ZZ\times\RR$ obtained by joining each pair of points $\{f(t),f(t+1)\}$ (for $t$ and $t+1\in P$) by a segment. We denote it by $\mathrm{int}(f)$.

\paragraph{The two sample paths meet.} Recall our assumption \eqref{S:e:ass_for_contradiction}, from which we want to get to a contradiction. Let $C=\mathrm{int}\left(Z\vert_{\llbracket 0,\tau\rrbracket}\right)$ and $C'=\mathrm{int}\left(Z'\vert_{\llbracket 0,\tau'\rrbracket}\right)$. For now, we want to show that $C$ and $C'$ meet at some point of $\RR^2$, which implies, by construction, that the two sets $\{Z_n,\,0\leqslant n\leqslant \tau\}$ and $\{Z'_n,\,0\leqslant n\leqslant \tau'\}$ meet at some point of $\ZZ^2$. To show that, we first form a closed simple curve $C_0$ of $\RR^2$ as shown in Figure \ref{S:f:barrier_property_2}. First we consider $$C'_*=\mathrm{int}\left(Z'\vert_{\llbracket 0,\tau'\rrbracket\setminus\cup_{i\in\NN^*} P_i(Z)}\right),$$ which is $C'$ from which we removed all the loops and which we interpolated. Then we join the two extreme points $x_0'$ and $x_1'$ of $C'$ (note that they both have to be on $C'_*$) using horizontal and vertical segments that go low enough and left enough so that they do not meet $C\cup C'$ except at $x_0'$ and $x_1'$ (if $C'_*$ intersects $x_0'+\{0\}\times(-\infty,0)$, we remove the initial part of $C'_*$ so that $x_0'$ is replaced by the lowest point on $C'_*\cap (x_0'+\{0\}\times(-\infty,0))$). This is possible because $C\cup C'$ is a compact set and because of $(1)$ and $(3)$ in \eqref{S:e:ass_for_contradiction}. By construction, $C_0$ is a closed simple curve, so we can apply Jordan's theorem to $C_0$. Point $x_1=Z_{\tau}$ is in the unbounded component, because the half-line $x_1+(0,\infty)\times\{0\}$ cannot meet $C_0$. On the contrary, point $x_0$ has to be in the bounded component, because the vertical segment joining $x_0$ to a lower point $x_2$ in the unbounded component meets $C_0$ only once, because of $(2)$ in \eqref{S:e:ass_for_contradiction} {\color{black}(here we use the sometimes called even-odd rule that can be found in \cite{Jordan})}. Therefore, curve $C$ has to meet $C_0$, and by construction of $C_0$, it has to meet it on $C'_*$. Therefore $C$ and $C'$ intersect.
    
\begin{figure}[!h]
  \begin{center}
        \begin{tikzpicture}[use Hobby shortcut, scale=0.8]
            \draw[dotted,very thick] (2,-1) .. (2.3,0) .. (1.8,1);
            \draw[dotted] (1.8,1).. (1,1) .. (1.5,0.5) .. (1.8,1);
            \draw[dotted,very thick] (1.8,1).. (1.5, 2) .. (0,3) ;
            \draw[fill] (0,3) circle (.05);
            \draw[fill, above left] (2,-1) circle (.05) node {$x_0'$};
            \draw[dashed] (2,-1) -- (2,-2) -- (-2,-2) -- (-2,3) -- (0,3);
            \draw (0,-1.5) .. (-0.5,-1) .. (1,2) .. (2,3);
            \draw[fill] (2,3) circle (.05);
            \draw[fill,right] (0,-1.5) circle (.05) node {$x_0$};
            \draw[left] (-2,1) node {$C_0$};
            \draw[above left] (0.2,-1) node {$C$};
            \draw[right] (2.3,0.2) node {$C'_*$};
            \draw[right] (2,3) node {$x_1$};
            \draw[above] (0,3) node {$x_1'$};
            \draw[fill, right] (0,-2.3) circle (.05) node {$x_2$};
        \end{tikzpicture}
        \caption{}
        \label{S:f:barrier_property_2}
    \end{center}
\end{figure}

\paragraph{Zero-loop case.} First consider the simpler case where $C'$ has no loops intersecting $C$. By the previous point, $C'$ meets $C$, so we can consider $$\hat{t}=\max\left\{t\in \llbracket 0,\tau'\rrbracket,\,Z'_t\in C\right\},$$ and $\hat{x}=Z'_{\hat{t}}$. Because of $(3)$ in \eqref{S:e:ass_for_contradiction}, $\hat{t}<\tau'$. The uniform variable that $Z'$ uses to jump at time $\hat{t}$ is $U(\hat{x},\Gamma_0(\hat{x})+1)$ (for there is no loop on $C'$ intersecting $C$, so $\hat{x}$ cannot be on a loop of $C'$). The same uniform variable is used by $Z$ when it gets to $\hat{x}$ for the first time, since by assumption $\mathrm{Supp}\,\Gamma\cap C'=\emptyset$. Therefore, $Z'_{\hat{t}+1}\in C$, which contradicts the definition of $\hat{t}$. At the end of the day, we have a contradiction, so our assumption \eqref{S:e:ass_for_contradiction} was false.

\paragraph{General case.} Consider now the case where $C'$ has a loop intersecting $C$. For $i\geqslant 1$, let $P_i=P_i(Z')$, $L_i=L_i(Z')$, $T_i^{in}=T_i^{in}(Z')$ and $T_i^{out}=T_i^{out}(Z')$. We let $$n_1=\min\left\{i\in \NN^*, L_i \cap C\neq\emptyset\right\}.$$
    Let $T$ be the first hitting time of $L_{n_1}$ by $Z$ and $T'\in \llbracket T^{in}_{n_1},T^{out}_{n_1}-1\rrbracket$ be the first time such that $Z'_{T'}=Z_T$. Let us show by induction on $t\leqslant T_{n_1}^{out}-T'$ that
    \begin{align}\label{S:e:first_equality}
    \left(Z_T,Z_{T+1},\ldots,Z_{T+t}\right)=\left(Z'_{T'},Z'_{T'+1},\ldots, Z'_{T'+t}\right).
    \end{align}
    The case $t=0$ follows from the fact that $Z'_{T'}=Z_T$. Suppose \eqref{S:e:first_equality} is true for $t<T_{n_1}^{out}-T'$. We need to show that $Z_{T+t+1}=Z'_{T'+t+1}.$
    
    \begin{itemize}[leftmargin=*]
    \item First, note that $Z'$ cannot have visited $Z'_{T'+t}$ before time $T'+t$. Indeed, suppose that it has; then there exists $i<n_1$ such that $Z'_{T'+t}\in L_i$, now $Z'_{T'+t}=Z_{T+t}$ so $L_i\cap C\neq \emptyset$, which contradicts the definition of $n_1.$ Therefore, the uniform variable $Z'$ uses to jump at time $T'+t$ is $U(Z'_{T'+t},\Gamma_0(Z'_{T'+t})+1)$.
    \item This also means that $Z'_{T'+t}=Z_{T+t}$ is not among $\{Z'_{T'},\ldots, Z'_{T'+t-1}\}$, which is the same set as $\{Z_T,\ldots, Z_{T+t-1}\}$ by the induction assumption. Therefore, using also the definition of $T$, $Z$ has not visited site $Z_{T+t}$ before time $T+t$, so the uniform variable it uses to jump at time $T+t$ is $U(Z_{T+t},\Gamma_0(Z_{T+t})+1)=U(Z'_{T'+t},\Gamma_0(Z'_{T'+t})+1)$ (we also use the fact that $\mathrm{Supp}\,\Gamma\cap C'=\emptyset$).
    \end{itemize}
    
    Both random walks use the same uniform variable, therefore $Z_{T+t+1}=Z'_{T'+t+1}$, which shows \eqref{S:e:first_equality} for $t+1$ and ends the induction. Applying equality \eqref{S:e:first_equality} with $t=T_{n_1}^{out}-T'$ yields
    \begin{align}\label{S:e:second_equality}
        \left(Z_T,Z_{T+1},\ldots,Z_{T+T_{n_1}^{out}-T'}\right)=\left(Z'_{T'},Z'_{T'+1},\ldots, Z'_{T_{n_1}^{out}}\right).
    \end{align}
    With the same arguments, we can show that we also have
    \begin{align}\label{S:e:third_equality}
        \left(Z_{T+T_{n_1}^{out}-T'},\ldots, Z_{T+T_{n_1}^{out}-T_{n_1}^{in}}\right)=\left(Z'_{T_{n_1}^{in}},Z'_{T_{n_1}^{in}+1},\ldots,Z'_{T'}\right).
    \end{align}
    Also, remark that in $(Z'_{T^{in}_{n_1}},\ldots, Z'_{T^{out}_{n_1}-1})$, we have $T^{out}_{n_1}-T^{in}_{n_1}+1$ distinct points of $L_{n_1}$ (by definition of $T_{n_1}^{out}$), so we have all the points in $L_{n_1}$ exactly once.
    Therefore, putting together \eqref{S:e:second_equality} and \eqref{S:e:third_equality}, and considering that $Z'_{T_{n_1}^{in}}=Z'_{T_{n_1}^{out}}$, we see that between times $T$ and $T+T_{n_1}^{out}-T_{n_1}^{in}$, $Z$ visits all the sites in $L_{n_1}$ exactly once too.
    
Set $\Gamma_1=\Gamma_0+\sum_{x\in L_{n_1}} \delta_{\{x\}}$ and $\tilde{P}_1=\llbracket T, T+T_{n_1}^{out}-T_{n_1}^{in}-1\rrbracket.$ Separating what happens before time $T$ resp. $T_{n_1}^{in}$ and what happens after time $T+T_{n_1}^{out}-T_{n_1}^{in}$ resp. $T_{n_1}^{out}$, we get
    $$\begin{array}{l}
    \left(Z^{x_0,\Gamma+\Gamma_1}_n,\,n\in\llbracket 0, \tau_{H,x_0}^{x_0,\Gamma+\Gamma_1}\rrbracket\right)=\left(Z^{x_0,\Gamma+\Gamma_0}_n,\, n\in \llbracket 0,\tau_{H,x_0}^{x_0,\Gamma+\Gamma_0}\rrbracket\setminus \tilde{P}_1\right);\\
    \left(Z^{x_0',\Gamma_1}_n,\, n\in\llbracket 0,\tau_{H,x_0}^{x_0',\Gamma_1}\rrbracket\right)=\left(Z^{x_0',\Gamma_0}_n,\,n\in\llbracket 0, \tau_{H,x_0}^{x_0',\Gamma_0}\rrbracket\setminus P_{n_1}\right).
    \end{array}$$
    Let $K$ be the number of loops of $C'$ that intersect $C$ ($K$ is finite). By applying the same line of reasoning inductively on the next loops of $C'$ that intersect $C$ (which we denote by $L_{n_2},\ldots,L_{n_K}$), we can construct a history $\Gamma_K$ such that $$\begin{array}{l}
    \left(Z^{x_0,\Gamma+\Gamma_K}_n,\,n\in\llbracket 0,\tau_{H,x_0}^{x_0,\Gamma+\Gamma_K}\rrbracket\right)=\left(Z^{x_0,\Gamma+\Gamma_0}_n,\,n\in\llbracket 0,\tau_{H,x_0}^{x_0,\Gamma+\Gamma_0}\rrbracket\setminus (\tilde{P}_1\cup\ldots \cup \tilde{P}_K)\right);\\
    \left(Z^{x_0',\Gamma_K}_n,\,n\in\llbracket 0,\tau_{H,x_0}^{x_0',\Gamma_K}\rrbracket\right)=\left(Z^{x_0',\Gamma_0}_n,\,n\in\llbracket 0,\tau_{H,x_0}^{x_0',\Gamma_0}\rrbracket\setminus (P_{n_1}\cup\ldots\cup P_{n_K})\right).
    \end{array}$$
    Therefore, by construction, the sample path of $Z^{x_0',\Gamma_K}$ on $\llbracket 0,\tau_{x_0,H}^{x_0',\Gamma_K}\rrbracket$ has no loop intersecting that of $Z^{x_0,\Gamma+\Gamma_K}$ on $\llbracket 0,\tau_{x_0,H}^{x_0,\Gamma+\Gamma_K}\rrbracket$, so we can apply the previous zero-loop case by replacing $\Gamma_0$ by $\Gamma_K$. Assumptions from \eqref{S:e:ass_for_contradiction} are still satisfied, and $\mathrm{Supp}\,\Gamma_K \cap \{Z_n^{x_0',\Gamma_K},\,n\in\NN\}=\emptyset$ by construction, so we do recover a contradiction.
\end{proof}

\subsection{Trapped points}\label{S:ss:traps}
Let us move on to the proof of Lemma \ref{S:l:v_-=v_+}. Recall that $v_-\leqslant v_+$, on account of Corollary \ref{S:c:v_-<v_+}, so we now argue by contradiction and assume that $v_-<v_+$. In the rest of this section, we set  
\begin{align}\label{S:d:delta}
\delta=\frac{v_+-v_-}{5(\beta+1)}.
\end{align}
Note that $\delta \in (0,1/2],$ using the bounds in \eqref{S:e:bounds_on_v_+}.

The crucial idea of our proof is given by Proposition \ref{S:p:barrier_property}, which implies that a random walk can be "trapped" by another random walk. We want to ensure that trapped random walks will experience a delay with respect to $v_+$, which motivates Definition \ref{S:d:traps} below.

Let $H\in\NN^*$ and $w\in\R\times\ZZ.$ Recall notation $H'$ from \eqref{S:n:H'}. We define
\begin{align}\label{S:d:z_w}
    z_w=w+(\delta H+4\beta H',2H')\in \RR\times\ZZ.
\end{align}

\begin{definition}[Trap]\label{S:d:traps}
We say that $w$ is $H$-trapped if there exists $y\in I_{\delta H}(z_w)$ such that:
\begin{enumerate}
    \item $V_{H-2H',z_w}^y\leqslant v_-+\delta$;
    \item For every $n\in\llbracket 0,\tau_{H-2H',z_w}^y\rrbracket$, $Y^y_n\geqslant\pi_2(w)+H'$.
\end{enumerate}
\end{definition}

The term "trap" will become clearer in Section \ref{S:sss:delay_trap}. See Figure \ref{S:f:traps} for an illustration of Definition \ref{S:d:traps}.

\begin{remark}\label{S:r:trap_mesurability}
Note that the definition of a trap implies that we can define an algorithm to decide if $w$ is $H$-trapped or not, only looking at the environment and the uniform variables between heights $\pi_2(w)+H'$ and $\pi_2(w)+H.$
\end{remark}

\begin{remark}\label{S:r:condition2}
    Condition 2 in Definition \ref{S:d:traps} is satisfied whenever $E_{H'}^y$ occurs. However it is more convenient to use condition 2, otherwise Remark \ref{S:r:trap_mesurability} would not hold, and it is sufficient, since we are not interested in the behaviour of $Z^y$ after time $\tau_{H-2H',z_w}^y.$
\end{remark}

\subsubsection{Probability of being trapped}
Of course, we will not be able to show that a point is trapped with a high probability, for point 1 in Definition \ref{S:d:traps} is very demanding. However, the definition of $v_-$ will allow us to show that we can reach any direction close to but at least $v_-$ with a positive probability, so we will be able to get a uniform lower bound on the probability of being trapped. This is what the following lemma expresses. Recall the definition of $\mathbf{H}_0$ from Remark \ref{S:r:mixing_boxes}.

\begin{lemma}\label{S:l:bound_trap}
\ncSH{H:traps}
There exists an integer constant $\ucSH{H:traps}\geqslant \ucSH{H:size_box}$, depending on $v_-$ and $v_+$, such that \begin{align*}
    \inf_{H\geqslant \ucSH{H:traps}} \inf_{w\in \R\times\ZZ} \PP(w\text{ is }H\text{-trapped})>0.    
    \end{align*}
\end{lemma}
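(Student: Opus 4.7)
The strategy is to leverage the liminf-positivity implicit in the definition of $v_-$ and to transfer it, via translation invariance and a tiling/pigeonhole argument, to a uniform lower bound on the probability that the single specific sub-box $I_{\delta H}(z_w)$ contains a ``slow'' random walk. Since $v_- + \delta/3 > v_-$, Remark \ref{S:r:monotonicity_v_+} gives $\liminf_{H \to \infty} \tilde{p}_H(v_- + \delta/3) > 0$, hence there exist $\eta > 0$ and $H_1 \in \NN$ such that $\tilde p_H(v_-+\delta/3) \geqslant \eta$ for all $H \geqslant H_1$. Setting $K = H - 2H'$ and using Corollary \ref{S:c:invariance}, I obtain, uniformly in $w$ and for $H$ large,
\[
\PP\bigl(\exists\, y \in I_K(z_w),\; V^y_{K, z_w} \leqslant v_- + \delta/3\bigr) \geqslant \eta.
\]

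I then tile $I_K(z_w)$ into sub-boxes $B_{i,j} = I_{\delta H}(z_w + (i\,\delta H,\, j\,(\delta H)'))$ for $(i,j) \in \llbracket 0, N_h - 1\rrbracket \times \llbracket 0, N_v - 1\rrbracket$, where $N_h = O(1/\delta)$ and $N_v = O(1/\sqrt{\delta})$, so the total number $N = N_h N_v = O(\delta^{-3/2})$ is a constant independent of $H$. Crucially, the law of $V^y_{K, z_w}$ is invariant under integer horizontal shifts of $y$, since $\tau^y_{K, z_w}$ depends only on the vertical coordinate of $z_w$; hence, writing $p_j$ for the probability that $B_{0,j}$ contains a starting point with $V^y_{K, z_w} \leqslant v_- + \delta/3$, a union bound gives $\sum_j p_j \geqslant \eta/N_h$, and pigeonhole yields a level $j^* \in \llbracket 0, N_v - 1\rrbracket$ with $p_{j^*} \geqslant \eta/N \geqslant c_0\,\delta^{3/2}\,\eta$.

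The crux is to transfer this bound from $B_{0, j^*}$ to the specific sub-box $I_{\delta H}(z_w) = B_{0,0}$. By vertical integer translation invariance, $p_{j^*}$ equals $\PP(\exists y \in I_{\delta H}(z_w),\, V^y_{K,\, z_w - (0, j^*(\delta H)')} \leqslant v_- + \delta/3)$; that is, the walk now hits height $\pi_2(w) + H - j^*(\delta H)'$ rather than the trap's target $\pi_2(w) + H$. I bridge the gap by continuing the walk to the target height using Proposition \ref{S:p:independence} combined with Proposition \ref{S:p:horizontal_bounds}: with probability $1 - O((\delta H)^{3/2}\,e^{-c(\delta H)^{1/2}})$, every $y \in I_{\delta H}(z_w)$ has extra horizontal displacement at most $\beta j^*(\delta H)' \leqslant \beta K'$ during this additional climb of at most $K'$ in height. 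Since $\beta K'/K = O(H^{-1/2}) \to 0$, for $H$ large this extra contribution loosens the direction bound only by $o(1)$, giving $V^y_{K, z_w} \leqslant v_- + \delta/3 + o(1) \leqslant v_- + \delta$ and thus condition~1 of Definition \ref{S:d:traps}. Condition~2 follows from Remark \ref{S:r:condition2}: since $\pi_2(y) \geqslant \pi_2(w) + 2H'$ for every $y \in I_{\delta H}(z_w)$, $E^y_{H'}$ implies condition~2, and a union bound applied to Proposition \ref{S:p:probability_of_going_down} shows that $E^y_{H'}$ holds simultaneously for all such $y$ with probability $1 - O((\delta H)^{3/2}\,e^{-cH^{1/2}})$. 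Combining these three estimates yields $\PP(w\text{ is }H\text{-trapped}) \geqslant c_0\,\delta^{3/2}\,\eta - o(1)$, which is bounded below by a positive constant once $H \geqslant \ucSH{H:traps}$ is chosen large enough.

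The main obstacle is exactly the sub-box relocalization: tiling plus pigeonhole alone only provides a lower bound for \emph{some} vertical level $j^*$, not the level $j=0$ required by the trap definition. The cure is to use vertical integer translation invariance to move the bound onto $I_{\delta H}(z_w)$ at the cost of shifting the walk's target height downward by at most $O(K') = O(H^{1/2})$, and then to absorb this shift via the horizontal displacement estimate of Proposition \ref{S:p:horizontal_bounds}. The argument works because $\delta$ is a fixed constant depending only on $v_-, v_+$ (so $N = O(\delta^{-3/2})$ does not grow with $H$ and the pigeonhole loss is a constant factor) and because $K'/K \to 0$ (so the additional displacement is negligible in direction).
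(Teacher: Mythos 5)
Your proposal is correct and follows essentially the same route as the paper: both arguments start from the uniform positivity of $\tilde p_H(v_-+c\delta)$ for all large $H$ (a consequence of the definition of $v_-$ as a supremum), transfer it to the single sub-box $I_{\delta H}(z_w)$ by a union bound/pigeonhole over a constant (in $H$) number of translates together with translation invariance, absorb the resulting $O(H^{1/2})$ vertical offsets in the reference height via Propositions \ref{S:p:independence} and \ref{S:p:horizontal_bounds}, and obtain condition~2 of Definition \ref{S:d:traps} from $E^y_{H'}$ and Proposition \ref{S:p:probability_of_going_down}. The only point to tighten is that your horizontal tiles are offset by the possibly non-integer amount $i\delta H$, so $B_{i,j}$ need not be an exact integer translate of $B_{0,j}$ (this is precisely the cardinality issue the paper flags before Lemma \ref{S:p:estimates_on_p} and circumvents with the $I_{\delta H/2}\subseteq I_{\delta H}$ inclusion); covering each $B_{i,j}$ by two integer translates of $B_{0,j}$ fixes this at the cost of a factor $2$.
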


\begin{proof}\ncS{S:c:numb}\ncS{S:c:prob2}\ncS{S:c:prob1} \ncS{S:c:prob} \ncS{S:c:inf}

Let us first study condition 1 in Definition \ref{S:d:traps}. Recall notation $\tilde{p}$ from Definition \ref{S:d:limiting_speeds}. We claim that there exist two positive constants $\ucS{S:c:numb}$ and $\ucS{S:c:prob1}$ such that for $H$ large enough,
\begin{align}\label{S:tetete}
    \inf_{w\in\R\times\ZZ} \PP\left(\exists\,y\in I_{\delta H}(z_w),\;V_{H-2H',z_w}^y\leqslant v_-+\delta\right)\geqslant \ucS{S:c:numb}^{-1}\,\tilde{p}_{H}(v_-+\delta/4) -\ucS{S:c:prob2}^{-1}e^{-\ucS{S:c:prob2} H^{1/2}}.
\end{align}

Let us prove this claim. Let us fix $H>4/\delta$. We have
\begin{align*}
    &\inf_{w\in\R\times\ZZ} \PP\left(\exists\,y\in I_{\delta H}(z_w),\;V_{H-2H',z_w}^y\leqslant v_-+\delta\right)\\
    &=\inf_{w\in\R\times\ZZ} \PP\left(\exists\,y\in I_{\delta H}(w),\;V_{H-2H',w}^y\leqslant v_-+\delta\right)\\
    &=\inf_{w\in[-1,0)\times\{0\}} \PP\left(\exists\,y\in I_{\delta H}(w),\;V_{H-2H',w}^y\leqslant v_-+\delta\right)\\
    &\geqslant \sup_{w\in[0,1)\times\{0\}} \PP\left(\exists\,y\in I_{\delta H/2}(w),\;V_{H-2H',w}^y\leqslant v_-+\delta\right)\\
    &= \sup_{w\in \RR\times\ZZ} \PP\left(\exists\,y\in I_{\delta H/2}(w),\;V_{H-2H',w}^y\leqslant v_-+\delta\right).
\end{align*}
In the first equality, we used that $w\mapsto z_w$ is a bijection of $\RR\times\ZZ$. In the second and last equalities, we used Corollary \ref{S:c:invariance}. In the inequality, we used that since $\delta H>4$, for any $w\in[-1,0)\times\{0\}$ and $w'\in [0,1)\times\{0\},$ $I_{\delta H/2}(w')$ is included in $I_{\delta H}(w)$.

Now, we want to replace $H-2H'$ by $H$ in the parameter of the direction. Indeed, the information we have on $v_-$ is a liminf when $H$ goes to infinity, and it could be that we are unlucky and this liminf is the limit on a subsequence that is not eventually in the image of $H\mapsto H-2H'$. In order to do this, we work on
$$\bigcap_{y\in I_{\delta H/2}(w)} \Theta_{H-2H'}^w D^y_{2H'},$$
which, using Propositions \ref{S:p:independence} and \ref{S:p:horizontal_bounds}, has probability at least $1-\ucS{S:c:prob1}^{-1}e^{-\ucS{S:c:prob1}H^{1/2}}$, where $\ucS{S:c:prob1}$ is a positive constant that does not depend on $H$. On this event, provided that $2\left(\beta+\left(v_-+\delta/2\right)\right) H'\leqslant \delta (H-2H')/2$, we have that if $V^y_{H,w}\leqslant v_-+\delta/2$, then
\begin{align*}
    X^y_{\tau_{H-2H',w}}-\pi_1(y)
    &\leqslant \left(X^y_{\tau_{H,w}}-\pi_1(y)\right)-\left(X^y_{\tau_{H,w}}-X^y_{\tau_{H-2H',w}}\right)\\
    &\leqslant \left(v_-+\delta/2\right)H+2\beta H'\\
    &\leqslant \left(v_-+\delta/2\right)(H-2H')+2\left(\beta+\left(v_-+\delta/2\right)\right) H'\\
    &\leqslant (v_-+\delta) (H-2H'),
\end{align*}
or, in other words, $V^y_{H-2H',w}\leqslant v_-+\delta$. Therefore, for $H$ large enough,
\begin{align*}
    &\sup_{w\in \RR\times\ZZ} \PP\left(\exists\,y\in I_{\delta H/2}(w),\;V_{H-2H',w}^y\leqslant v_-+\delta\right)\\
    &\geqslant \sup_{w\in \RR\times\ZZ} \PP\left(\exists\,y\in I_{\delta H/2}(w),\;V_{H,w}^y\leqslant v_-+\delta/2\right)-\ucS{S:c:prob1}^{-1}e^{-\ucS{S:c:prob1}H^{1/2}}.
\end{align*}
Now, in order to recover parameter $H$ in the size of the rectangle too, we consider $I_{H}$ and split it into rectangles $I_{\delta H/2}(w)$ for some number $\ucS{S:c:numb}$ (which does not depend on $H$) of values of $w\in\RR\times\ZZ$ satisfying $0\leqslant \pi_2(w)<H'$. Let us fix such a $w$ and $y\in I_{\delta H/2}(w)$. In order to link $V^y_{H}$ with $V^y_{H,w}$, we work on $$\bigcap_{y\in I_{H}} \Theta_H D_{H'}^y,$$
which, using Propositions \ref{S:p:independence} and \ref{S:p:horizontal_bounds}, has probability at least $1-\ucS{S:c:prob}^{-1}e^{-\ucS{S:c:prob}H^{1/2}}$ for some constant $\ucS{S:c:prob}>0$ that does not depend on $H$. On this event, the displacement of $Z^y$ between times $\tau_{H}^y$ and $\tau_{H,w}^y$ is at most $\beta H'$, which is at most $\delta H/4$ for $H$ large enough. In the end, using a union bound, we have
\begin{align*}
    \sup_{w\in \RR\times\ZZ} &\PP\left(\exists\,y\in I_{\delta H/2}(w),\;V_{H,w}^y\leqslant v_-+\delta/2\right) \nonumber\\
    &\geqslant \ucS{S:c:numb}^{-1}\, \left(\PP\left(\exists\,y\in I_{H},\;V_{H}^y\leqslant v_-+\delta/4\right)-\ucS{S:c:prob}^{-1}e^{-\ucS{S:c:prob} H^{1/2}}\right)\nonumber\\
    &= \ucS{S:c:numb}^{-1}\, \left(\tilde{p}_{H}(v_-+\delta/4) -\ucS{S:c:prob}^{-1}e^{-\ucS{S:c:prob} H^{1/2}}\right).
\end{align*}

Putting all inequalities together, we derive our claim \eqref{S:tetete} with $\ucS{S:c:prob2}$ depending on $\ucS{S:c:numb}$, $\ucS{S:c:prob1}$ and  $\ucS{S:c:prob}$. Now, $\liminf_{H\to\infty} \tilde{p}_{H}(v_-+\delta/4)>0,$
since $v_-+\delta/4>v_-$ (recall Definition \ref{S:d:limiting_speeds}). Therefore, using \eqref{S:tetete},
$$\liminf_{H\to\infty} \inf_{w\in\R\times\ZZ} \PP\left(\exists\,y\in I_{\delta H}(z_w),\;V_{H-2H',z_w}^y\leqslant v_-+\delta\right)>0.$$
This implies that for any large enough $\ucSH{H:traps}$, we have
$$\ucS{S:c:inf}:=\inf_{H\geqslant \ucSH{H:traps}} \inf_{w\in\R\times\ZZ} \PP\left(\exists\,y\in I_{\delta H}(z_w),\;V_{H-2H',z_w}^y\leqslant v_-+\delta\right)>0.$$

In the end, for every $H\geqslant \ucSH{H:traps}$ and $w\in\R\times\ZZ,$ we have, using Remark \ref{S:r:condition2} and Proposition \ref{S:p:probability_of_going_down},
$$\PP(\text{$w$ is $H$-trapped})\geqslant \ucS{S:c:inf}-cH^{3/2} e^{-\ucS{S:c:proba_of_going_down}H^{1/2}}\geqslant \frac{\ucS{S:c:inf}}{2}>0,$$
provided that $\ucSH{H:traps}$ is large enough.
\end{proof}

We stated the above result as a lemma because it will later appear as a mere first step towards a stronger result, Proposition \ref{S:p:proba_threats}. The same holds for the next lemma, which is the first step towards Proposition \ref{S:p:delay_threats}.

\subsubsection{Delay near a trapped point}\label{S:sss:delay_trap}

The following lemma explains why the name "trap" was chosen: heuristically speaking, when we start a random walk near an $H$-trapped point $w$, with high probability it is delayed by the time it reaches height $\pi_2(w)+H$. Indeed, the trap gives a sample path that acts as a barrier, which pushes our random walk to the left unless it gets around the barrier from below. See Figure \ref{S:f:traps} for an illustration of this idea.

\ncSH{H:delay_traps}
Recall the definition of $\ucSH{H:traps}$ in Lemma \ref{S:l:bound_trap}. For the next lemma, we need another technical requirement on $H$. Note that, since $\beta \delta>0$, there exists $\ucSH{H:delay_traps}\geqslant\ucSH{H:traps}$, which depends on $v_-$ and $v_+$, such that
\begin{align}\label{S:d:H2}
    \forall H\geqslant \ucSH{H:delay_traps},\;-5\beta\delta H+(4\beta-2(v_-+\delta))H'\leqslant 0.
\end{align}

\begin{lemma}\label{S:l:delay_traps}
Let $H\geqslant \ucSH{H:delay_traps}$, $w\in\RR\times\ZZ$, $x_0\in w+(-\infty,\delta H)\times [0,H')$ and $\Gamma\in\mathcal{H}$ whose support satisfies $\pi_2(\mathrm{Supp}\,\Gamma) \subseteq (-\infty, \pi_2(w)+H')$. Suppose that
\begin{enumerate}
    \item $w$ is $H$-trapped;
    \item $D^{x_0,\Gamma}_{4H'}$ occurs;
    \item $\Theta_{4H'}^{x_0} E^{x_0,\Gamma}_{H'}$ occurs.
\end{enumerate}
Then, we have $$X^{x_0,\Gamma}_{\tau_{H,w}}\leqslant  \pi_1(w)+(v_+-2\delta)H.$$
Again, the statement is also true when we replace $x_0$, $w$ and $\Gamma$ by values of random variables satisfying the same assumptions.
\end{lemma}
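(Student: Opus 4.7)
\medskip

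\noindent\textbf{Proof plan.} The plan is to apply the barrier property (Proposition \ref{S:p:barrier_property}) with $Z^{x_0,\Gamma}$ as the left walk and the trap walk $Z^{y}$ as the right walk. Let $y\in I_{\delta H}(z_w)$ be as in Definition \ref{S:d:traps}, so that $\pi_1(y)\in[\pi_1(w)+\delta H+4\beta H',\,\pi_1(w)+2\delta H+4\beta H')$ and $\pi_2(y)\in[\pi_2(w)+2H',\,\pi_2(w)+3H')$. Set $h=\pi_2(w)+H-\pi_2(x_0)$, which is the height that both walks need to climb (relative to $\pi_2(x_0)$) in order to reach the horizontal line used in the statement. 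Since $\pi_2(x_0)\in[\pi_2(w),\pi_2(w)+H')$, one has $h\in(H-H',H]$, and in particular $h>3H'>\pi_2(y)-\pi_2(x_0)$ for $H\geqslant\ucSH{H:delay_traps}$ large enough. Also $\pi_1(x_0)<\pi_1(w)+\delta H<\pi_1(y)$, so the horizontal ordering required by the barrier property holds. The history condition $\mathrm{Supp}\,\Gamma\cap\{Z^y_n,\,n\leqslant \tau^{y}_{h,x_0}\}=\emptyset$ is immediate: by trap condition 2 the trajectory $\{Z^y_n,\,n\leqslant \tau^y_{H-2H',z_w}\}$ lies in $\{Y\geqslant \pi_2(w)+H'\}$, while $\mathrm{Supp}\,\Gamma$ lies strictly below that line by assumption; and $\tau^y_{h,x_0}=\tau^y_{H-2H',z_w}$ because both are the first hitting time of height $\pi_2(w)+H$.

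\medskip

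\noindent The next step is to rule out the two undesirable scenarios in Proposition \ref{S:p:barrier_property}. Scenario 2 (the walk $Z^y$ descends below $\pi_2(x_0)$ on the vertical line through $x_0$) cannot occur since $Z^y$ stays above $\pi_2(w)+H'>\pi_2(x_0)$. For scenario 1 (the walk $Z^{x_0,\Gamma}$ visits a point with horizontal coordinate $\pi_1(y)$ below height $\pi_2(y)$), I would split the trajectory at time $\tau^{x_0,\Gamma}_{4H',x_0}$: assumption 2, $D^{x_0,\Gamma}_{4H'}$, bounds the horizontal displacement before that time by $4\beta H'$, so the walk remains at $x$-coordinate less than $\pi_1(x_0)+4\beta H'<\pi_1(w)+\delta H+4\beta H'\leqslant \pi_1(y)$; and assumption 3, $\Theta_{4H'}^{x_0}E^{x_0,\Gamma}_{H'}$, forces $Y^{x_0,\Gamma}_n\geqslant \pi_2(x_0)+3H'>\pi_2(y)$ after that time. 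Thus $Z^{x_0,\Gamma}$ cannot meet $y+\{0\}\times(-\infty,0)$ before time $\tau^{x_0,\Gamma}_{h,x_0}=\tau^{x_0,\Gamma}_{H,w}$.

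\medskip

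\noindent Only scenario 3 remains, yielding
\[
X^{x_0,\Gamma}_{\tau_{H,w}}\leqslant X^{y}_{\tau_{H-2H',z_w}}=\pi_1(y)+(H-2H')V^{y}_{H-2H',z_w}\leqslant \pi_1(y)+(H-2H')(v_-+\delta).
\]
Using $\pi_1(y)<\pi_1(w)+2\delta H+4\beta H'$ and rearranging, the right-hand side equals
\[
\pi_1(w)+(v_-+3\delta)H+\bigl(4\beta-2(v_-+\delta)\bigr)H'.
\]
The desired inequality $X^{x_0,\Gamma}_{\tau_{H,w}}\leqslant \pi_1(w)+(v_+-2\delta)H$ is then equivalent to $\bigl(4\beta-2(v_-+\delta)\bigr)H'\leqslant (v_+-v_--5\delta)H$, which, thanks to the choice $\delta=(v_+-v_-)/(5(\beta+1))$ in \eqref{S:d:delta}, simplifies to $-5\beta\delta H+\bigl(4\beta-2(v_-+\delta)\bigr)H'\leqslant 0$. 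This is exactly the defining condition \eqref{S:d:H2} for $\ucSH{H:delay_traps}$, concluding the proof.

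\medskip

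\noindent The main obstacle is the bookkeeping of heights and histories needed to legitimately feed the setup into Proposition \ref{S:p:barrier_property}: one must simultaneously ensure that the shifted hitting times match ($\tau^y_{h,x_0}=\tau^y_{H-2H',z_w}$), that the history $\Gamma$ does not touch the trap trajectory (this motivates the buffer $\pi_2(w)+H'$ in the trap definition and the support condition on $\Gamma$), and that scenario 1 is excluded (this is the role of the vertical band of width $4H'$ in assumptions 2 and 3 of the lemma and in the definition of $z_w$). Once all these geometric constraints are aligned, the conclusion follows deterministically from the barrier property.
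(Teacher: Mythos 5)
Your proposal is correct and follows essentially the same route as the paper: apply Proposition \ref{S:p:barrier_property} to the pair $(Z^{x_0,\Gamma},Z^y)$ at the shifted height $h=H-(\pi_2(x_0)-\pi_2(w))$, rule out scenario 2 via trap condition 2 on $Z^y$, rule out scenario 1 via hypotheses 2 and 3 on $Z^{x_0,\Gamma}$ (splitting at $\tau_{4H',x_0}$), and then conclude from scenario 3 with the same algebraic chain using $\pi_1(y)<\pi_1(w)+2\delta H+4\beta H'$, the trap direction bound $v_-+\delta$, and the definitions \eqref{S:d:delta} and \eqref{S:d:H2}. You spell out the height bookkeeping (the identification $\tau^y_{h,x_0}=\tau^y_{H-2H',z_w}$ and the ranges of $\pi_1(y),\pi_2(y)$) a little more explicitly than the paper does, but the argument is the same.
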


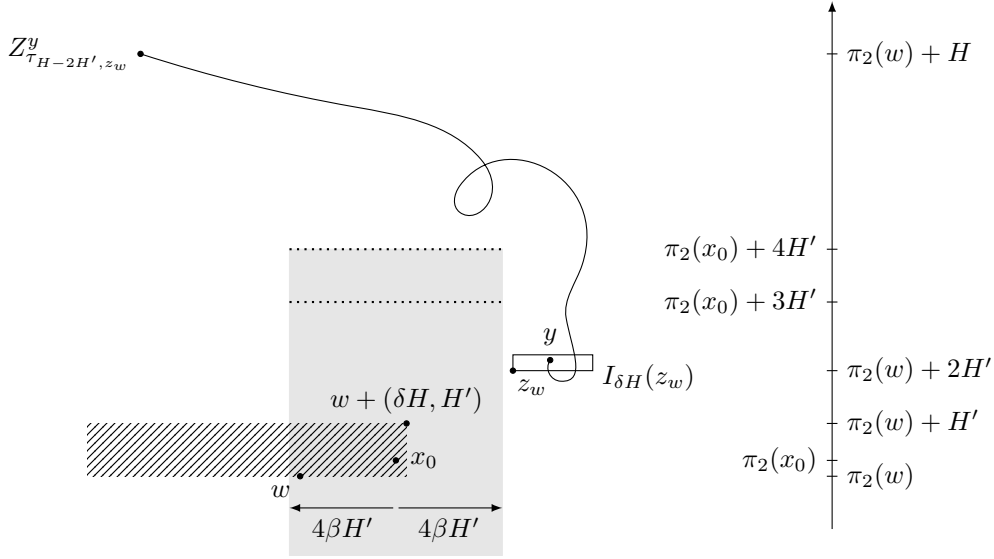
\begin{figure}[!h]
  \begin{center}
    \begin{tikzpicture}[use Hobby shortcut,scale=0.7]
        \draw[fill,black!10!white] (-1.2,-2.5) rectangle (2.8,3.3);
        \path[pattern=north east lines] (-5,0) rectangle (1,-1);
        \draw[thick,dotted] (-1.2,2.3) -- (2.8,2.3);
        \draw[thick, dotted] (-1.2,3.3) -- (2.8,3.3);
        \draw (3,1) rectangle (4.5,1.3);
        \draw[fill, below right] (3,1) circle (.05);
        \draw[fill, below right] (2.9,1) circle node {$z_w$};
        \draw[below right] (4.5,1.3) node {$I_{\delta H}(z_w)$};
        \draw[above] (3.7,1.3)  node {$y$};
        \draw[fill, right] (3.7,1.2) circle (.05);
        \draw[fill, below left] (-1,-1) circle (.05) node {$w$};
        \draw[fill, above] (1,0) circle (.05) node {$w+(\delta H,H')$};
        \draw[fill, below right] (0.8,-0.7) circle (.05);
        \draw[fill, right] (0.9,-0.7) node {$x_0$};
        \draw[->,>=latex] (0.85,-1.6) -- (2.8,-1.6);
        \draw[below] (1.8,-1.6) node {$4\beta H'$};
        \draw[->,>=latex] (0.75,-1.6) -- (-1.2,-1.6);
        \draw[below] (-0.2,-1.6) node {$4\beta H'$};
        \draw (3.7,1.2) .. (3.9,0.8) .. (4,2) .. (4.3,3) .. (2,4.5) .. (2,4) .. (2.4,5) .. (0,6) .. (-4,7);
        \draw[fill, left] (-4,7) circle (.05) node {$Z^y_{\tau_{H-2H',z_w}}$};
        \draw[->,>=latex] (9,-2) -- (9,8);
        \draw[right] (8.9,7) -- (9.1,7) node {$\pi_2(w)+H$};
        \draw[left] (9.1,3.3) -- (8.9,3.3) node {$\pi_2(x_0)+4H'$};
        \draw[left] (9.1,2.3) -- (8.9,2.3) node {$\pi_2(x_0)+3H'$};
        \draw[right] (8.9,1) -- (9.1,1) node {$\pi_2(w)+2H'$};
        \draw[right] (8.9,0) -- (9.1,0) node{$\pi_2(w)+H'$};
        \draw[right] (8.9,-1) -- (9.1,-1) node{$\pi_2(w)$};
        \draw[left] (9.1,-0.7) -- (8.9,-0.7) node{$\pi_2(x_0)$};
    \end{tikzpicture}
    \caption{Illustration of $w$ being $H$-trapped. The line connecting $(\pi_1(y),\pi_2(z_w))$ and $Z^y_{\tau_{H-2H',z_w}}$ has direction at most $v_-+\delta$. A random walk starting at $x_0$ in the hatched area, is delayed by the sample path given by $Z^y$, as soon as it stays in the grey area and does not go back under the lower dotted line once it has reached the upper one.}
    \label{S:f:traps}
    \end{center}
\end{figure}

\begin{proof}
Let $H$, $w$, $x_0$ and $\Gamma$ be as in the statement of the lemma. Suppose $w$ is $H$-trapped. By definition, there exists $y\in I_{\delta H}(z_w)$ such that $V_{H-2H',z_w}^{y}\leqslant v_-+\delta$ and
\begin{align}\label{S:a:Y^y}
    \forall n\in\llbracket 0,\tau_{H-2H',z_w}^y\rrbracket,\;Y^y_n\geqslant\pi_2(w)+H'.
\end{align}
Let us apply Proposition \ref{S:p:barrier_property} with $x_0$, $\Gamma$ and $x_0'=y$, after replacing $H$ by $H-(\pi_2(x_0)-\pi_2(w))$.
\begin{itemize}[leftmargin=*]
    \item Using \eqref{S:a:Y^y} and the assumption on the support of $\Gamma$, we have $\mathrm{Supp}\,\Gamma\,\cap\,\{Z_n^{y},n\leqslant \tau^y_{H,w}\}=\emptyset.$ By \eqref{S:a:Y^y}, we also note that $Z^{y}$ cannot visit $x_0+\{0\}\times (-\infty,0)$ before time $\tau^y_{H,w}=\tau^y_{H-2H',z_w}.$
    \item Assumptions 2 and 3 in the proposition ensure that $Z^{x_0,\Gamma}$ does not visit $y+\{0\}\times (-\infty,0)$ before time $\tau_{H,x_0}^{x_0,\Gamma}$ either (see Figure \ref{S:f:traps} for an illustration of this fact), since $\pi_1(y)-\pi_1(x_0)>4\beta H'$ by construction.
\end{itemize}
Therefore we must have
\begin{align*}
X_{\tau_{H,w}}^{x_0,\Gamma}\leqslant X^y_{\tau_{H-2H',z_w}}
&=X_{\tau_{H,w}}^{y}\\
&\leqslant \pi_1(y)+(v_-+\delta)(H-2H')&\mbox{since $V_{H-2H',z_w}^{y}\leqslant v_-+\delta$}\\
&\leqslant \pi_1(z_w)+\delta H+(v_-+\delta)(H-2H')&\mbox{since $y\in I_{\delta H}(z_w)$}\\
&\leqslant \pi_1(w)+2\delta H+4\beta H'+(v_-+\delta)(H-2H')&\mbox{using \eqref{S:d:z_w}}\\
&=\pi_1(w)+(v_-+3\delta)H+(4\beta-2(v_-+\delta)H'\\
&=\pi_1(w)+(v_+-(5\beta+2)\delta) H +(4\beta-2(v_-+\delta))H'&\mbox{using \eqref{S:d:delta}}\\
&\leqslant \pi_1(w)+(v_+-2\delta)H&\mbox{using \eqref{S:d:H2}.}
\end{align*}
\end{proof}

The interest of traps becomes clear with Lemma \ref{S:l:delay_traps}. The issue however is that the probability of being trapped cannot be made arbitrarily close to $1$ when $H$ goes to infinity; we only know, thanks to Lemma \ref{S:l:bound_trap}, that it is uniformly positive. Therefore, we need to introduce another notion in which we will allow some entropy on where to find a trap.

\subsection{Threatened points}\label{S:ss:threats}
The problem with traps is that the probability of being trapped may be very small; however we will see that it is sufficient to have a trapped point along a line segment of slope $v_+$ in order to experience the delay, which motivates the new definition below.

\begin{definition}[Threat]\label{S:d:threats}
Let $H\in\NN^*$, $r\in\NN^*$ and $w\in\R\times\ZZ.$ $w$ is said to be $(H,r)$-threatened if one of the points $w_j=w+jH(v_+,1)$, where $j\in\llbracket 0,r-1\rrbracket$, is $H$-trapped.
\end{definition}

\subsubsection{Probability of being threatened}

When $r$ increases (keep in mind that $rH$ is the vertical length of the line segment along which we look for trapped points), it is clear that the probability that $w$ is threatened increases. We now show that it goes to $1$ when $r\to\infty$, and quantify the convergence using $\alpha.$ This is the major asset of the notion of threats. Recall constant $\ucSH{H:traps}$ from Lemma \ref{S:l:bound_trap}.

\begin{proposition}\label{S:p:proba_threats}
\ncS{S:c:threats}
There exists $\ucS{S:c:threats}=\ucS{S:c:threats}(\delta)>0$ such that for every $H\geqslant \ucSH{H:traps}$ and $r\in\NN^*$, \begin{align}\label{S:e:estimate_final_threat}
\sup_{w\in\RR\times\ZZ}\PP(w\text{ is not $(H,r)$-threatened})\leqslant \ucS{S:c:threats}\,r^{-\alpha}.\end{align}
\end{proposition}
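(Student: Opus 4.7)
The plan is to apply the multi-scale renormalization scheme of Section~\ref{S:sss:renormalization_scheme} using Lemma~\ref{S:l:bound_trap} as its seed. Writing $c_\star := \inf_{H \geqslant \ucSH{H:traps}} \inf_w \PP(w \text{ is } H\text{-trapped}) > 0$, the game is to iterate a mixing-based decoupling so that, along a geometric sequence of scales, the probability of ``no trap anywhere'' decays polynomially in the number of trap attempts.

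The first prerequisite is to make the trap event measurable in a bounded box, which is not automatic: although by Remark~\ref{S:r:trap_mesurability} ``$w$ is $H$-trapped'' is measurable within the vertical strip at heights $[\pi_2(w)+H', \pi_2(w)+H]$, the random walks it involves can in principle roam horizontally. I therefore intersect the trap event with the localization event $F^{\mathrm{loc}}(w) := \bigcap_{y \in I_{\delta H}(z_w)} D^y_H$. Proposition~\ref{S:p:horizontal_bounds} and a union bound give $\PP(F^{\mathrm{loc}}(w)^c) \leqslant cH^{3/2}e^{-cH}$, which I can bring below $c_\star/2$ by enlarging $\ucSH{H:traps}$ if necessary. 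The resulting \emph{localized trap} at $w$ has probability $\geqslant c_\star/2$ and is measurable in a box $\hat B(w)$ of horizontal width $(2\beta+\delta)H$ and vertical extent $[\pi_2(w)+H', \pi_2(w)+H]$. Since a localized trap is a fortiori a trap, it suffices to bound $q_r := \sup_w \PP(w \text{ is not } (H,r)\text{-locally-threatened})$, where ``locally-threatened'' is defined exactly as in Definition~\ref{S:d:threats} but with localized traps in place of traps.

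The core recursive inequality is
\begin{equation}\label{S:e:rec_plan}
q_{3r} \leqslant q_r^2 + c(rH)^{-\alpha}, \qquad r \geqslant 1.
\end{equation}
To prove it, I partition the $3r$ candidate points $w_0, \ldots, w_{3r-1}$ along the line of slope $v_+$ through $w$ into three consecutive blocks $G_1, G_2, G_3$ of $r$ points each. If $w$ is not $(H,3r)$-locally-threatened, then in particular neither $G_1$ nor $G_3$ contains a localized trap; by Corollary~\ref{S:c:invariance} each sub-event has probability at most $q_r$. Each is measurable in a box of vertical extent $rH$ and horizontal diameter at most $(2\beta+1)rH$ (combining the block's intrinsic horizontal span $(r-1)H|v_+|$, the local trap width $(2\beta+\delta)H$, and $|v_+| \leqslant \beta$). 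Their vertical separation is at least $rH + H'$, so Fact~\ref{S:p:mixing} with $h = rH/2$ yields \eqref{S:e:rec_plan}. Setting $r_k := 3^k r_0$, I then prove $q_{r_k} \leqslant Cr_k^{-\alpha}$ by induction on $k$: the inductive step $q_{r_{k+1}} \leqslant C^2 r_k^{-2\alpha} + c(r_kH)^{-\alpha} \leqslant C \cdot 3^{-\alpha} r_k^{-\alpha}$ closes under the two conditions $C \leqslant r_k^\alpha /(2 \cdot 3^\alpha)$ (automatic for $r_0$ large and all $k \geqslant 0$) and $cH^{-\alpha} \leqslant C \cdot 3^{-\alpha}/2$ (which fixes an additional threshold on $H$ depending only on $\delta$ and $\alpha$).

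The base case $q_{r_0} \leqslant Cr_0^{-\alpha}$ is obtained by iterating \eqref{S:e:rec_plan} from the trivial bound $q_1 \leqslant 1 - c_\star/2$: the map $x \mapsto x^2 + \eta$ with $\eta = cH^{-\alpha}$ small contracts super-exponentially to its stable fixed point near $\eta$, which for $H$ large enough sits below $Cr_0^{-\alpha}$. For arbitrary $r$ with $r_k \leqslant r < r_{k+1}$, monotonicity of $r \mapsto q_r$ gives $q_r \leqslant q_{r_k} \leqslant 3^\alpha C r^{-\alpha}$, yielding the proposition with $\ucS{S:c:threats} = 3^\alpha C$. The main difficulty to keep in mind is that $\ucS{S:c:threats}$ must depend only on $\delta$: this is possible because the mixing error in \eqref{S:e:rec_plan} carries an extra $H^{-\alpha}$ factor, so raising $H$ above a fixed threshold (depending only on $\delta$ through $c_\star$) suffices to absorb it without requiring $H$ to grow with the scale. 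This is the reason for using the constant-ratio scales $r_k = 3^k r_0$ instead of the super-exponentially growing $L_k$; with the $L_k$ one would lose a blowing-up factor $l_k^\alpha$ during interpolation, incompatible with the target rate $r^{-\alpha}$.
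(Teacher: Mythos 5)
Your proposal is correct and follows essentially the same route as the paper: a triadic renormalization seeded by Lemma~\ref{S:l:bound_trap}, splitting the $3r$ candidate points into three blocks, localizing the trap events so that the first and third blocks are measurable in vertically separated boxes, and decoupling them via Fact~\ref{S:p:mixing} with $h\asymp rH/2$ to get $q_{3r}\leqslant q_r^2+c(rH)^{-\alpha}$. The only differences are cosmetic: the paper localizes at the block level (events $\mathcal{O}_k$) rather than redefining traps, and it runs the induction twice (first a geometric bound, then a bootstrap to the $3^{-\alpha k}$ rate), whereas you reach the small base case in one pass by iterating the quadratic map — both are valid.
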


\ncS{S:c:proof_threats} \ncSk{S:k:proof_threats} \ncSk{S:k:proof_threats_2}
In order to prove this result, we follow again the structure of proof given in Section \ref{S:sss:renormalization_scheme} (only here the scale parameter is $r$ and not $H$). Mind that here we will need to apply the renormalization method twice to get the desired estimate.

We start by considering only $r=3^k$ for $k\in\NN$. We set $$q_k=q_k(H)=\sup_{w\in \RR\times\ZZ}\PP(w\text{ is not }(H,3^k)\text{-threatened}).$$ Let us start by showing that $q_k$ converges to $0$ when $k\to\infty$, uniformly in $H$ large enough. More precisely, we show the following result.

\begin{lemma}\label{S:l:first_est}
    There exists $\ucS{S:c:proof_threats}\in [1/3,1)$ and $\ucSk{S:k:proof_threats}\in\NN$ such that
\begin{align}\label{S:estimate}
\forall k\geqslant 2,\;\forall H\geqslant \ucSH{H:traps},\;q_{\ucSk{S:k:proof_threats}+k}\leqslant \ucS{S:c:proof_threats}^k.
\end{align}
\end{lemma}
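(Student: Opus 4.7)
The plan is a single-level multi-scale renormalization in $k$, following the scheme of Section~\ref{S:sss:renormalization_scheme}. I will first establish a recursion
\[
q_{k+1}(H)\leqslant q_k(H)^{2}+C_{1}\,(3^{k}H)^{-\alpha}
\]
valid for $k\geqslant 0$ and $H\geqslant \ucSH{H:traps}$, with $C_{1}$ independent of $H$ and $k$. The key combinatorial observation is that if $w$ is not $(H,3^{k+1})$-threatened, then the outer thirds of the line of $3^{k+1}$ candidates witness two events of the form ``$w_{\bullet}$ is not $(H,3^{k})$-threatened'', namely at $w_{\bullet}=w$ and at $w_{\bullet}=w+2\cdot 3^{k}H(v_{+},1)$; each such event has probability at most $q_{k}(H)$. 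By Remark~\ref{S:r:trap_mesurability}, after intersecting with the high-probability event that all auxiliary random walks launched from the sets $I_{\delta H}(z_{w_{j}})$ stay in horizontally bounded windows (via Propositions~\ref{S:p:horizontal_bounds} and~\ref{S:p:exit_box}), each event fits in a box of height $\lesssim 3^{k}H$ and horizontal width $\lesssim (v_{+}+2\beta)\,3^{k}H$, and the two boxes are vertically separated by at least $3^{k}H$. Thus the hypotheses of Fact~\ref{S:p:mixing} hold with $h=3^{k}H$, yielding the recursion (after absorbing the $e^{-cH^{1/2}}$ localization errors into the mixing term, possibly by enlarging $\ucSH{H:traps}$).

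Secondly, I will bootstrap a uniform-in-$H$ decay to below $1/2$. The base estimate $q_{0}(H)\leqslant 1-p_{0}<1$ supplied by Lemma~\ref{S:l:bound_trap}, combined with the monotonicity $q_{k+1}\leqslant q_{k}$ (a direct consequence of $\{(H,3^{k})\text{-threatened}\}\subseteq\{(H,3^{k+1})\text{-threatened}\}$), gives $q_{k}\leqslant 1-p_{0}$ for all $k$. Passing to the limit in the recursion then forces $L:=\lim_{k}q_{k}$ to satisfy $L\leqslant L^{2}$, whence $L=0$; the convergence is uniform in $H\geqslant \ucSH{H:traps}$ because every constant in the recursion is. I then pick $\ucSk{S:k:proof_threats}$ large enough that $q_{\ucSk{S:k:proof_threats}}(H)\leqslant 1/2$ for every such $H$, and, enlarging if needed, also that $4\,C_{1}\leqslant 3^{\alpha\,\ucSk{S:k:proof_threats}}$.

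Thirdly, setting $\ucS{S:c:proof_threats}=1/2$, I prove $q_{\ucSk{S:k:proof_threats}+k}(H)\leqslant 2^{-k}$ for $k\geqslant 2$ by induction. The base case $k=2$ follows from two iterations of the recursion starting from $q_{\ucSk{S:k:proof_threats}}\leqslant 1/2$. For the inductive step, the recursion gives
\[
q_{\ucSk{S:k:proof_threats}+k+1}(H)\leqslant 2^{-2k}+C_{1}\,3^{-\alpha(\ucSk{S:k:proof_threats}+k)}H^{-\alpha},
\]
and each summand is at most $2^{-(k+2)}$: the first because $2k\geqslant k+2$ whenever $k\geqslant 2$, the second because $\alpha>12$ makes $3^{\alpha}\gg 2$, so the ratio $C_{1}\,2^{k+2}/3^{\alpha(\ucSk{S:k:proof_threats}+k)}$ is decreasing in $k$ and is $\leqslant 1$ at $k=0$ by the choice of $\ucSk{S:k:proof_threats}$.

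The main obstacle is Step~1: making the non-threatenedness events box-measurable. The trap event involves an auxiliary family of random walks launched from $I_{\delta H}(z_{w_{j}})$ that can \emph{a priori} stray horizontally, so we must intersect with containment events to fit inside boxes compatible with the $(2(2\beta+1),3)$ size constraints of Fact~\ref{S:p:mixing}. The bookkeeping is analogous to that of the events $\mathcal{F}_{k}$, $\mathcal{G}_{k}$ in the proof of Lemma~\ref{S:l:estimate_to_show}, but must handle $O(3^{k}H\cdot H')$ starting points in a way that keeps the resulting error polynomially small. Once this is set up, the remaining two-phase iteration is routine.
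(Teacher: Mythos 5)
Your proposal is correct and follows essentially the same route as the paper: the same outer-thirds decomposition of the $3^{k+1}$ candidate points into two non-threatened events, the same localization into vertically separated boxes via containment events so that Fact \ref{S:p:mixing} applies with $h\asymp 3^kH$, and the same resulting recursion $q_{k+1}\leqslant q_k^2+c\,(3^kH)^{-\alpha}$ driving an induction. The only (harmless) difference is in seeding the induction: you bootstrap $q_k\to 0$ uniformly in $H$ via monotonicity and a limit argument so as to take $\ucS{S:c:proof_threats}=1/2$, whereas the paper starts directly from the base case $q_{\ucSk{S:k:proof_threats}+2}<1$ given by Lemma \ref{S:l:bound_trap} and absorbs it into a possibly larger constant $\ucS{S:c:proof_threats}<1$.
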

Note that this bound is not the final bound that we want (which involves $\alpha$). That is why we will need to show a second estimate after this one.
\begin{proof}
To prove \eqref{S:estimate}, we use induction on $k\geqslant 2$. Let us fix $\ucSk{S:k:proof_threats}\in\NN$ (we will choose it later in the proof).
\paragraph{Base case.} If a point is not $(H,r)$-threatened, in particular it is not $H$-trapped, so, by Lemma \ref{S:l:bound_trap}, \begin{align*}\sup_{H\geqslant \ucSH{H:traps}} q_{\ucSk{S:k:proof_threats}+2}\leqslant \sup_{H\geqslant \ucSH{H:traps}}\sup_{w\in \R\times\ZZ} \PP(\text{$w$ is not $H$-trapped})<1.\end{align*}
Therefore there exists $\ucS{S:c:proof_threats}\in[1/3,1)$ such that the case $k=2$ in (\ref{S:estimate}) is satisfied, namely $q_{\ucSk{S:k:proof_threats}+2}\leqslant \ucS{S:c:proof_threats}^2$ for all $H\geqslant \ucSH{H:traps},$ and the choice of $\ucS{S:c:proof_threats}$ can be made independently of $\ucSk{S:k:proof_threats}$.
\paragraph{Induction step.} Fix $k\geqslant 2$ and suppose that \begin{align}\label{S:e:induction_ass}
\displaystyle\sup_{H\geqslant \ucSH{H:traps}} q_{\ucSk{S:k:proof_threats}+k}\leqslant \ucS{S:c:proof_threats}^k.
\end{align}
Fix an integer $H\geqslant \ucSH{H:traps}$ and $w\in \R\times\ZZ.$ Note that event $\{\text{$w$ is not $(H,3^{\ucSk{S:k:proof_threats}+k+1})$-threatened}\}$ is included in the events given by $$\mathcal{A}_k=\bigcap_{j=0}^{3^{\ucSk{S:k:proof_threats}+k}-1} \{\text{$w_j$ is not $H$-trapped}\}\;\;\;\text{and}\;\;\;\mathcal{A}_k'=\bigcap_{j=2\cdot 3^{\ucSk{S:k:proof_threats}+k}}^{3^{\ucSk{S:k:proof_threats}+k+1}-1} \{\text{$w_j$ is not $H$-trapped}\}.$$
Using Remark \ref{S:r:trap_mesurability}, those events are measurable with respect to horizontal strips separated in time by $3^{\ucSk{S:k:proof_threats}+k}H$. In order to replace those strips by boxes, anticipating the use of Fact \ref{S:p:mixing}, we introduce the following events that will give us additional horizontal bounds:
\begin{align*}
    \mathcal{O}_k=\bigcap_{j=0}^{3^{\ucSk{S:k:proof_threats}+k}-1} G_j(w)\;\;\text{and}\;\;\mathcal{O}_k'=\bigcap_{j=2\cdot 3^{\ucSk{S:k:proof_threats}+k}}^{3^{\ucSk{S:k:proof_threats}+k+1}-1} G_j(w_{2\cdot 3^{\ucSk{S:k:proof_threats}+k}})
\end{align*}
where, for $\bar{w}\in\RR\times\ZZ,$ $$G_j(\bar{w})=\bigcap_{y\in I_{\delta H}(z_{w_j})} \left\{Z^y_{\bigl[0,\tau_{H-2H',z_{w_j}}\bigr]}\subseteq B_{3^{\ucSk{S:k:proof_threats}+k}H}(\bar{w}-(3^{\ucSk{S:k:proof_threats}+k} H/2,0))\right\}.$$

\begin{figure}[!h]
  \begin{center}
    \begin{tikzpicture}[use Hobby shortcut,scale=0.6]
        \draw[fill, above left] (0,0) circle (.05) node {$w$};
        \draw[dotted] (0,0) -- (4.5,9);
        \draw[fill] (0.5,1) circle (.05);
        \draw[fill] (1,2) circle (.05);
        \draw[fill] (1.5,3) circle (.05);
        \draw[fill] (2,4) circle (.05);
        \draw[fill] (2.5,5) circle (.05);
        \draw[fill, above left] (3,6) circle (.05) node {$w_{2\cdot 3^{\ucSk{S:k:proof_threats}+k}H}$};
        \draw[fill] (3.5,7) circle (.05);
        \draw[fill] (4,8) circle (.05);
        \draw[fill, above left] (4.5,9) circle (.05) node {$w_{3^{\ucSk{S:k:proof_threats}+k+1}H}$};
        \draw (-3.5,-0.4) rectangle (3.5,3);
        \draw[right] (3.7,1) node {$B_{3^{\ucSk{S:k:proof_threats}+k}H}(w-(3^{\ucSk{S:k:proof_threats}+k} H/2,0))$};
        \draw (-0.5,5.6) rectangle (6.5,9);
        \draw[below] (8,5.6) node {$B_{3^{\ucSk{S:k:proof_threats}+k}H}(w_{2\cdot 3^{\ucSk{S:k:proof_threats}+k}H}-(3^{\ucSk{S:k:proof_threats}+k} H/2,0))$};
    \end{tikzpicture}
    \caption{}
    \label{S:f:threats2}
    \end{center}
\end{figure}

Now, $\mathcal{A}_k\cap \mathcal{O}_k$ is measurable with respect to box $B_{3^{\ucSk{S:k:proof_threats}+k}H}(w-(3^{\ucSk{S:k:proof_threats}+k} H/2,0))$, and $\mathcal{A}_k'\cap \mathcal{O}_k'$ is measurable with respect to box $B_{3^{\ucSk{S:k:proof_threats}+k}H}(w_{2\cdot 3^{\ucSk{S:k:proof_threats}+k}}-(3^{\ucSk{S:k:proof_threats}+k} H/2,0))$. Those two boxes satisfy the assumptions of Fact \ref{S:p:mixing} with $h=3^{\ucSk{S:k:proof_threats}+k}H/2$.

Let us now bound the probability of $\bigl(G_j(w)\bigr)^c$. For $j\in\llbracket 0, 3^{\ucSk{S:k:proof_threats}+k}-1\rrbracket$, we have \begin{align}\label{S:e:inclusion_weird}
        \bigcap_{y\in I_{\delta H}(z_{w_j})} D^y_{3^{\ucSk{S:k:proof_threats}+k-1} H/\beta}\subseteq G_j(w).
    \end{align}
Indeed, let us assume that $\ucSk{S:k:proof_threats}$ is large enough so that for every $H$ we have $3^{\ucSk{S:k:proof_threats}+k-1}H/\beta\geqslant H-2H'$. Let $j\in\llbracket 0, 3^{\ucSk{S:k:proof_threats}+k}-1\rrbracket$, $y\in I_{\delta H}(z_{w_j})$ and $n\in \llbracket 0,\tau^y_{H-2H',z_{w_j}}\rrbracket$. On the event on the left-hand side of \eqref{S:e:inclusion_weird}, we have
    \begin{align*} 
        \left\vert X^y_n-\pi_1(w)\right\vert
        &\leqslant \left\vert X^y_n-\pi_1(y)\right\vert+\left\vert \pi_1(y)-\pi_1(w)\right\vert\\
        &\leqslant 3^{\ucSk{S:k:proof_threats}+k-1} H +jH\vert v_+\vert +\delta H + 4\beta H'+\delta H\\
        &\leqslant 3^{\ucSk{S:k:proof_threats}+k-1} H + 3^{\ucSk{S:k:proof_threats}+k}\beta H+ 2\delta H + 4\beta H'&\mbox{using \eqref{S:e:bounds_on_v_+}}\\
        &<3^{\ucSk{S:k:proof_threats}+k}\beta H+ \frac{3^{\ucSk{S:k:proof_threats}+k} H}{2},    \end{align*}
    provided that $\ucSk{S:k:proof_threats}$ is large enough (independently of $H$), which gives a first condition to choose $\ucSk{S:k:proof_threats}$. From these horizontal bounds, noting that the vertical bounds are always satisfied by construction, we have that 
    $$\forall n\leqslant \tau^y_{H+2H',z_{w_j}}, \;Z^y_n\in B_{3^{\ucSk{S:k:proof_threats}+k}H}(w-(3^{\ucSk{S:k:proof_threats}+k} H/2,0)),$$
    which ends the proof of \eqref{S:e:inclusion_weird}. Similarly, with the exact same arguments, for $j\in\llbracket 2\cdot 3^{\ucSk{S:k:proof_threats}+k},3^{\ucSk{S:k:proof_threats}+k+1}-1\rrbracket$, we have
    \begin{align}\label{S:e:inclusion_weird2}
        \bigcap_{y\in I_{\delta H}(z_{w_j})} D^y_{3^{\ucSk{S:k:proof_threats}+k-1} H/\beta}\subseteq G_j(w_{2\cdot 3^{\ucSk{S:k:proof_threats}+k}}).
    \end{align}
Using \eqref{S:e:inclusion_weird} and \eqref{S:e:inclusion_weird2} along with union bounds and Proposition \ref{S:p:horizontal_bounds}, we have $$\PP\left(\mathcal{O}_k^c\right)\leqslant \ucS{S:c:ball}^{-1}\, 3^{\ucSk{S:k:proof_threats}+k} (\delta H)^2  e^{-\ucS{S:c:ball} 3^{\ucSk{S:k:proof_threats}+k-1}H/\beta}\leqslant c^{-1}e^{-c \,3^{\ucSk{S:k:proof_threats}+k}H},$$
where $c>0$ does not depend on $H$, and the same holds for $\mathcal{O}_k'$. So, using Fact \ref{S:p:mixing},
\begin{align*}
        q_{\ucSk{S:k:proof_threats}+k+1}&\leqslant \PP\left((\mathcal{A}_k\cap \mathcal{O}_k) \cap  (\mathcal{A}_k'\cap \mathcal{O}_k')\right)+\PP(\mathcal{O}_k^c)+\PP((\mathcal{O}_k')^c)\\
        &\leqslant q_{\ucSk{S:k:proof_threats}+k}^2+c_1\,\left(\frac{3^{\ucSk{S:k:proof_threats}+k}H}{2}\right)^{-\alpha}+2 c^{-1}e^{-c \,3^{\ucSk{S:k:proof_threats}+k}H}\\
        &\leqslant q_{\ucSk{S:k:proof_threats}+k}^2+c\,3^{-(\ucSk{S:k:proof_threats}+k)\alpha}&\mbox{using that $H\geqslant 1$.}
    \end{align*}
    In the end, using induction assumption \eqref{S:e:induction_ass} as well as the fact that $1/3\leqslant\ucS{S:c:proof_threats}<1$, $\alpha\geqslant 1$ and $k\geqslant 2$, \begin{align*}
        \frac{q_{\ucSk{S:k:proof_threats}+k+1}}{\ucS{S:c:proof_threats}^{k+1}}
        &\leqslant \ucS{S:c:proof_threats}^{k-1}+c\,\ucS{S:c:proof_threats}^{\ucSk{S:k:proof_threats}-1}\leqslant \ucS{S:c:proof_threats}+c\,\ucS{S:c:proof_threats}^{\ucSk{S:k:proof_threats}-1} \leqslant 1,
    \end{align*}
    provided that $\ucSk{S:k:proof_threats}$ is chosen large enough (recall that the choice of $\ucS{S:c:proof_threats}$ was independent of $\ucSk{S:k:proof_threats}$). This concludes the induction and therefore the proof of Lemma \ref{S:l:first_est}.
\end{proof}

We now prove the desired estimate along the subsequence; more precisely, we prove the following lemma.

\begin{lemma}\label{S:l:second_est}
    There exists $\ucSk{S:k:proof_threats_2}\in\NN^*$ such that \begin{align} \forall k\in \NN^*,\;\forall H\geqslant \ucSH{H:traps},\;q_{\ucSk{S:k:proof_threats_2}+k}\leqslant \frac{1}{2}\,3^{-\alpha k}.\label{S:pataprout}\end{align}
\end{lemma}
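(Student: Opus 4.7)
The plan is to bootstrap Lemma \ref{S:l:first_est} by re-using the very recursion already derived in its proof. Indeed, the renormalization argument of Lemma \ref{S:l:first_est} produces, for every integer $N\geqslant \ucSk{S:k:proof_threats}+2$ and every $H\geqslant \ucSH{H:traps}$, the recursive inequality
\begin{align}\label{S:plan:recurrence}
q_{N+1}\leqslant q_N^2 + c\cdot 3^{-\alpha N},
\end{align}
where $c>0$ does not depend on $N$ or $H$ (the $H$-dependence of the mixing term $(3^N H/2)^{-\alpha}$ and the exponential error $2c^{-1}e^{-c\,3^N H}$ is absorbed into $c$ thanks to $H\geqslant 1$). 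The weakness of Lemma \ref{S:l:first_est} is that its rate $\ucS{S:c:proof_threats}\in[1/3,1)$ may exceed $3^{-\alpha}$; however, as soon as $q_N$ becomes small enough that $q_N^2\ll 3^{-\alpha N}$, the recurrence \eqref{S:plan:recurrence} automatically decays at the polynomial rate $3^{-\alpha}$ dictated by the mixing exponent.

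Concretely, I would pick $\ucSk{S:k:proof_threats_2}\geqslant \ucSk{S:k:proof_threats}+1$ so large that simultaneously
\begin{align*}
\ucS{S:c:proof_threats}^{\,\ucSk{S:k:proof_threats_2}+1-\ucSk{S:k:proof_threats}}\leqslant \tfrac{1}{2}\cdot 3^{-\alpha}
\quad\text{and}\quad
c\cdot 3^{-\alpha\,\ucSk{S:k:proof_threats_2}}\leqslant \tfrac{1}{4}\cdot 3^{-\alpha}.
\end{align*}
Both conditions can be met since $\ucS{S:c:proof_threats}<1$. The first, combined with Lemma \ref{S:l:first_est}, furnishes the base case $q_{\ucSk{S:k:proof_threats_2}+1}\leqslant \frac{1}{2}\cdot 3^{-\alpha}$ uniformly in $H\geqslant \ucSH{H:traps}$. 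For the inductive step, assume $q_{\ucSk{S:k:proof_threats_2}+k}\leqslant \frac{1}{2}\cdot 3^{-\alpha k}$ for some $k\geqslant 1$. Applying \eqref{S:plan:recurrence} with $N=\ucSk{S:k:proof_threats_2}+k$ and factoring out $3^{-\alpha k}$ yields
\begin{align*}
q_{\ucSk{S:k:proof_threats_2}+k+1}
&\leqslant \tfrac{1}{4}\cdot 3^{-2\alpha k}+c\cdot 3^{-\alpha(\ucSk{S:k:proof_threats_2}+k)}\\
&\leqslant\left(\tfrac{1}{4}\cdot 3^{-\alpha k}+c\cdot 3^{-\alpha\,\ucSk{S:k:proof_threats_2}}\right)\cdot 3^{-\alpha k}\\
&\leqslant\tfrac{1}{2}\cdot 3^{-\alpha(k+1)},
\end{align*}
where $k\geqslant 1$ gives $\frac{1}{4}\cdot 3^{-\alpha k}\leqslant \frac{1}{4}\cdot 3^{-\alpha}$, and the second term is controlled by the choice of $\ucSk{S:k:proof_threats_2}$. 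This closes the induction and yields \eqref{S:pataprout}.

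I do not foresee any serious obstacle here: the entire argument is a clean bootstrap, and the precise exponent $\alpha$ is automatically inherited from the decoupling estimate used in deriving \eqref{S:plan:recurrence}. The only bookkeeping needed is to verify that the recursion was established in the proof of Lemma \ref{S:l:first_est} uniformly in $H\geqslant \ucSH{H:traps}$, which is immediate from inspecting that derivation. Note that the polynomial exponent in \eqref{S:pataprout} cannot be improved by this method, since the mixing term in \eqref{S:plan:recurrence} is the fundamental bottleneck once $q_N^2$ becomes negligible.
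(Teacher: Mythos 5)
Your proposal is correct and follows essentially the same route as the paper: establish the base case $q_{\ucSk{S:k:proof_threats_2}+1}\leqslant \frac{1}{2}3^{-\alpha}$ from Lemma \ref{S:l:first_est} by taking $\ucSk{S:k:proof_threats_2}$ large, then rerun the recursion $q_{N+1}\leqslant q_N^2+c\,3^{-\alpha N}$ from the first renormalization step to propagate the bound $\frac{1}{2}3^{-\alpha k}$ by induction. Your version merely makes explicit the two conditions on $\ucSk{S:k:proof_threats_2}$ that the paper leaves as "large enough".
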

\begin{proof}
We use exactly the same method as in the proof of the first estimate \eqref{S:estimate}. Since $q_k$ goes to $0$ uniformly in $H\geqslant \ucSH{H:traps}$ (on account of \eqref{S:estimate}), we have, for any $\ucSk{S:k:proof_threats_2}\in\NN$ large enough and $H\geqslant \ucSH{H:traps}$,
$$q_{\ucSk{S:k:proof_threats_2}+1}\leqslant \frac{1}{2}\;3^{-\alpha}.$$ We now show by induction on $k\geqslant 1$ that $q_{\ucSk{S:k:proof_threats_2}+k}\leqslant \frac{1}{2}\,3^{-\alpha k}.$ For the induction step, we obtain with the same arguments as before,
$$\frac{q_{\ucSk{S:k:proof_threats_2}+k+1}}{\frac{1}{2}3^{-\alpha(k+1)}}\leqslant 2\cdot 3^{\alpha(k+1)}\,\left(\frac{1}{4}3^{-2\alpha k}+c\,3^{-\alpha(\ucSk{S:k:proof_threats_2}+k)}\right),$$
which is at most $1$ provided that $\ucSk{S:k:proof_threats_2}$ is large enough. This gives a second condition to choose $\ucSk{S:k:proof_threats_2}$. This constant being properly chosen, we get \eqref{S:pataprout}.
\end{proof}

\begin{proof}[Proof of Proposition \ref{S:p:proba_threats}] It remains to interpolate \eqref{S:pataprout} to get the desired result. Let $H\geqslant\ucSH{H:traps}$, $r\geqslant 3^{\ucSk{S:k:proof_threats_2}+1}$ and $k\in\NN^*$ such that $3^{\ucSk{S:k:proof_threats_2}+k}\leqslant r<3^{\ucSk{S:k:proof_threats_2}+k+1}.$ Then 
\begin{align*}
\PP(w\text{ is not $(H,r)$-threatened})&\leqslant\PP(w\text{ is not $(H,3^{\ucSk{S:k:proof_threats_2}+k})$-threatened})&\mbox{by definition}\\
&\leqslant \frac{1}{2} 3^{-\alpha k}&\mbox{by \eqref{S:pataprout}}\\
&\leqslant \frac{3^{\alpha(\ucSk{S:k:proof_threats_2}+1)}}{2} \,r^{-\alpha}.
\end{align*}
It remains to tailor constant $\ucS{S:c:threats}$ in order for \eqref{S:e:estimate_final_threat} to hold for every $r\in\NN^*$.
\end{proof}

\subsubsection{Delay near a threatened point}

Now that we have shown that every point is threatened with a high probability, we need to quantify the delay caused by threats for the random walk, just as we did for traps with Lemma \ref{S:l:delay_traps}.

First a technical definition is required, because we do not want to look for threats among too many points for entropy reasons (see the proof of Lemma \ref{S:l:paths}).

\begin{definition}\label{S:d:rounded_points}
Let $y\in\ZZ^2$ and $H\in\NN^*$ such that $H \geqslant 4/\delta$. We denote by $\lfloor y\rfloor_H$ the point of $\left\lfloor \delta H/4\right\rfloor \ZZ\times H' \ZZ$ given by
$$\lfloor y \rfloor_H=\left(\left\lfloor \frac{\pi_1(y)}{\tilde{H}}\right\rfloor \tilde{H},\,\left\lfloor \frac{\pi_2(y)}{H'}\right\rfloor H'\right),\;\;\text{where } \tilde{H}=\lfloor \delta H/4\rfloor.$$
\end{definition}

\ncSH{H:delay_threats}
Let $\ucSH{H:delay_threats}\geqslant \ucSH{H:delay_traps}$ be an integer (depending on $v_-$ and $v_+$) satisfying 
\begin{align}\label{S:d:H:delay_threats}
\text{for every $H\geqslant \ucSH{H:delay_threats}$, }
\left\{\begin{array}{l}
H\geqslant 4/\delta;\\
-\delta H +\beta H'\leqslant 0.
\end{array}\right.
\end{align}

The first condition ensures that Definition \ref{S:d:rounded_points} can be used, and the second one is a technical requirement that will appear later on.

\begin{proposition}\label{S:p:delay_threats}
Let $H\geqslant \ucSH{H:delay_threats}$, $r\in\NN^*$ and $y\in\ZZ^2$. We set $w=\lfloor y\rfloor_H.$ Let $\Gamma\in\mathcal{H}$ be such that $\pi_2(\mathrm{Supp}\,\Gamma)\subseteq (-\infty,\pi_2(w)+H')$. We assume the following conditions are met:
\begin{enumerate}
    \item $w$ is $(H,r)$-threatened;
    \item For every $j\in\llbracket 0,r-1\rrbracket$,  $\Theta_{jH}^y V_{H,Z^{y,\Gamma}_{\tau_{jH,y}}}^{y,\Gamma}\leqslant v_++\frac{\delta}{2r}$;
    \item For every $j\in\llbracket 0,r-1\rrbracket$, $\Theta_{jH}^y D_{4H'}^{y,\Gamma}$ and $\Theta_{jH+4H'}^y E_{H'}^{y,\Gamma}$ occur;
    \item For every $j\in\llbracket 1,r\rrbracket$, $\Theta_{jH}^w D_{H'}^{y,\Gamma}$ occurs.
\end{enumerate}
Then we have $X^{y,\Gamma}_{\tau_{rH,y}}\leqslant \pi_1(y)+\left( v_+-\displaystyle\frac{\delta}{2r}\right) rH.$

Again, $y$ and $\Gamma$ can be replaced by random variables satisfying the same assumptions.
\end{proposition}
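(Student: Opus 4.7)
The plan is to split the total horizontal displacement into the $r$ vertical slabs of height $H$, use hypothesis~2 on each slab to bound the per-slab horizontal progress by $(v_+ + \delta/(2r))H$, and replace this bound at the unique slab where the trap sits by a strictly better one coming from Lemma \ref{S:l:delay_traps}. Let $j_0 \in \llbracket 0, r-1\rrbracket$ be such that $w_{j_0} := w + j_0 H (v_+, 1)$ is $H$-trapped, which exists by hypothesis~1. Unwinding the $\Theta$ notation through the continuation identity $Z^{y,\Gamma}_{\tau_{jH, y} + n} = Z^{\mathcal{Z}, \mathcal{N}}_n$ with $\mathcal{Z} = Z^{y,\Gamma}_{\tau_{jH,y}}$, $\mathcal{N} = N^{y,\Gamma}_{\tau_{jH,y}}$, hypothesis~2 translates into
\begin{align*}
X^{y,\Gamma}_{\tau_{(j+1)H, y}} - X^{y,\Gamma}_{\tau_{jH, y}} \leqslant \Bigl( v_+ + \frac{\delta}{2r} \Bigr) H \qquad \text{for every } j \in \llbracket 0, r-1 \rrbracket.
\end{align*}

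The heart of the argument is the application of Lemma \ref{S:l:delay_traps} at the slab $j = j_0$, with reference point $w_{j_0}$, starting point $x_0 = Z^{y,\Gamma}_{\tau_{j_0 H, y}}$, and history $\Gamma' = N^{y,\Gamma}_{\tau_{j_0 H, y}}$. The verification of the hypotheses is pure bookkeeping. Definition \ref{S:d:rounded_points} of $w = \lfloor y \rfloor_H$ gives $0 \leqslant \pi_2(y) - \pi_2(w) < H'$ and $0 \leqslant \pi_1(y) - \pi_1(w) \leqslant \delta H / 4$; combined with the partial sum of the inequality above up to $j_0 - 1$, this yields $\pi_2(x_0) - \pi_2(w_{j_0}) \in [0, H')$ and $\pi_1(x_0) - \pi_1(w_{j_0}) \leqslant \delta H / 4 + (\delta/(2r)) j_0 H \leqslant 3\delta H/4 < \delta H$. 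The support condition on $\Gamma'$ follows from the hypothesis on $\mathrm{Supp}\,\Gamma$ together with the fact that, $Z^{y,\Gamma}$ being a nearest-neighbour walk, any site visited before $\tau_{j_0 H, y}$ lies at height strictly less than $\pi_2(y) + j_0 H < \pi_2(w_{j_0}) + H'$. Conditions~2 and~3 of Lemma \ref{S:l:delay_traps} are, after the same unwinding of $\Theta$, exactly the $j = j_0$ instance of hypothesis~3 of the proposition. The lemma then delivers
\begin{align*}
X^{y,\Gamma}_{\tau_{(j_0+1)H, w}} \leqslant \pi_1(w_{j_0}) + (v_+ - 2\delta) H = \pi_1(w) + v_+ (j_0+1) H - 2\delta H.
\end{align*}

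Since the natural checkpoint at the end of slab $j_0$ is height $\pi_2(y) + (j_0+1)H$, not $\pi_2(w) + (j_0+1)H$, I bridge the two using hypothesis~4 at $j = j_0 + 1$: on $\Theta_{(j_0+1)H}^{w} D_{H'}^{y,\Gamma}$ the walk's horizontal displacement is bounded by $\beta H'$ between heights $\pi_2(w) + (j_0+1)H$ and $\pi_2(w) + (j_0+1)H + H'$, and this latter height already exceeds $\pi_2(y) + (j_0+1)H$ since $\pi_2(y) - \pi_2(w) < H'$. Absorbing the excursion with $\beta H' \leqslant \delta H$ from \eqref{S:d:H:delay_threats} gives $X^{y,\Gamma}_{\tau_{(j_0+1)H, y}} \leqslant \pi_1(w) + v_+(j_0+1)H - \delta H$. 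Telescoping the remaining $r - j_0 - 1 \leqslant r-1$ slabs with the per-slab bound and using $\pi_1(w) \leqslant \pi_1(y)$ finally gives
\begin{align*}
X^{y,\Gamma}_{\tau_{rH, y}} \leqslant \pi_1(y) + v_+ r H - \delta H + \frac{\delta(r - j_0 - 1)}{2r} H \leqslant \pi_1(y) + \Bigl( v_+ - \frac{\delta}{2r} \Bigr) r H,
\end{align*}
as required. The only genuine obstacle is the horizontal and vertical discrepancy between $y$ and $w = \lfloor y \rfloor_H$, which must be absorbed both when placing $x_0$ in the admissible region of Lemma \ref{S:l:delay_traps} and when converting from reference $w$ back to reference $y$; this is precisely the reason the technical condition~\eqref{S:d:H:delay_threats} was imposed.
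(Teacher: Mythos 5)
Your proof is correct and follows essentially the same route as the paper's: locate the trapped index $j_0$, verify that $\mathcal{Z}_{j_0}=Z^{y,\Gamma}_{\tau_{j_0H,y}}$ lands in the admissible region $w_{j_0}+(-\infty,\delta H)\times[0,H')$ with a suitably localized history, invoke Lemma \ref{S:l:delay_traps} there, bridge the two grids (reference $w$ versus reference $y$) via condition 4 and \eqref{S:d:H:delay_threats}, and telescope the remaining slabs with the per-slab bound from condition 2. All the bookkeeping (the $\delta H/4+\delta H/2<\delta H$ estimate, the support argument for the history, and the final $-\delta H+\tfrac{\delta(r-j_0-1)}{2r}H\leqslant-\tfrac{\delta}{2r}rH$ step) matches the paper's computation.
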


To simplify the reading of the proof, we set, for every $j\in\llbracket 0,r\rrbracket$, 
$$\begin{array}{lll}
\mathcal{Z}_j=\mathcal{Z}_j^{y,\Gamma}=Z^{y,\Gamma}_{\tau_{jH,y}}&\text{and}& \mathcal{N}_j=\mathcal{N}_j^{y,\Gamma}=N^{y,\Gamma}_{\tau_{jH,y}};\\
\tilde{\mathcal{Z}}_j=\tilde{\mathcal{Z}}_j^{y,\Gamma}=Z^{y,\Gamma}_{\tau_{jH,w}}&\text{and}& \tilde{\mathcal{N}}_j=\tilde{\mathcal{N}}_j^{y,\Gamma}=N^{y,\Gamma}_{\tau_{jH,w}}.
\end{array}$$

Let us explain the proposition heuristically. Suppose that a point $w$ close to $y$ is threatened (condition 1). Divide the strip $\pi_2^{-1}([\pi_2(y),\pi_2(y)+rH])$ into $r$ strips of height $H$, and assume that the random walk started at $y$ does not go too fast on each of these $r$ strips (condition 2). The fact that $w$ is threatened and that $Z^y$ does not go too fast will imply that $Z^y$ meets a potential barrier associated to a trapped point $w_{j_0}=w+j_0 H(v_+,1)$, and it cannot get around this barrier because of condition 3 (recall Lemma \ref{S:l:delay_traps}). Therefore, combining the upper bound we have on each of the $r$ strips, and the new upper bound that the barrier gives us on this particular strip, we get a global upper bound. Mind that in this proposition two grids coexist, one with lines at heights $\pi_2(y)+jH$ ($j\in\llbracket 0,r\rrbracket$), where the $\mathcal{Z}_j$ are, and one with lines at heights $\pi_2(w)+jH$ ($j\in\llbracket 1,r\rrbracket$), where the $\tilde{\mathcal{Z}}_j$ are. Condition 4 allows us to control the error of displacement between $\tilde{\mathcal{Z}}_j$ and $\mathcal{Z}_j$.

\begin{figure}[!h]
  \begin{center}
    \begin{tikzpicture}[use Hobby shortcut,scale=0.6]
        \draw (3,0.3) rectangle (5,0.7);
        \draw[fill, left] (3,0.3) circle (.05) node {$z_{w_{j_0}}$};
        \draw[fill] (3.5,0.5) circle (.05);
        \draw[above] (3.5,0.7) node {$x_0'$};
        \draw[fill, above left] (-2,-2) circle (.05) node {$w_{j_0}$};
        \draw[dotted] (-2,-2) -- (-3.5,-4);
        \draw[->, >=latex, left] (-3.6,-3.6) -- (-3.8,-4) node {$w$};
        \draw[fill,right] (1,-1) circle (.05) node {$w_{j_0}+(\delta H,H')$};
        \draw [dotted, thick] (3.5,0.5) .. (3.7,0.3) ..  (5,2) .. (4.7,3) .. (2,4.5) .. (2,4) .. (2.4,5) .. (0,6) .. (-4,7);
        \draw[fill] (-4,7) circle (.05);  
        \draw[fill, below left] (0,-2) circle (.05) node {$\tilde{\mathcal{Z}}_{j_0}$};
        \draw[fill, left] (-0.5,-1.5) circle (.05) node {$\mathcal{Z}_{j_0}$};
        \draw (-2.5,-4) .. (-1,-3) .. (1.5,-3) .. (0,-2) .. (-0.5,-1.5)  .. (0,-1) .. (-1,0) .. (0.2,2) .. (-1,4) .. (0.3,5) .. (0,6);
        \draw[->, >=latex, below left] (-2.7,-3.6) -- (-2.8,-4) node {$y$};
        \draw[fill, below left] (0,6) circle (.05) node {$x$};
    \end{tikzpicture}
    \caption{Illustration of $w=\lfloor y\rfloor_H$ being $(H,r)$-threatened and the random walk starting from $y$ experiencing a delay on account of the trap. In this example, at the point labeled $x$, the sample path started at $y$ coalesces with the dotted curve, which causes a delay.}
    \label{S:f:threats}
    \end{center}
\end{figure}
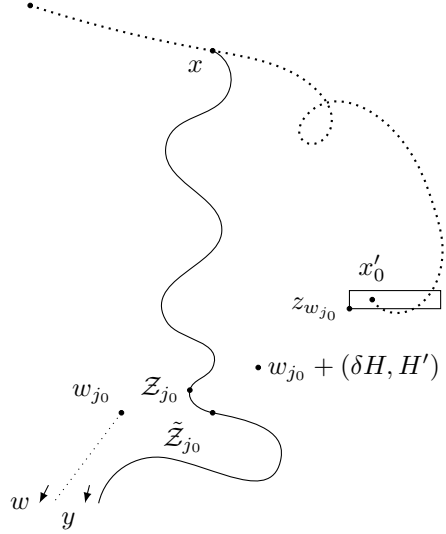

\begin{proof}
Let $H\geqslant \ucSH{H:delay_threats}$, $r\in\NN^*$, $y\in\ZZ^2$, $w=\lfloor y\rfloor_H$, $\Gamma\in\mathcal{H}$ such that $\pi_2(\mathrm{Supp}\,\Gamma)\subseteq (-\infty,\pi_2(w)+H')$. Assume all four assumptions from the statement are satisfied. Because of condition 1, there exists $j_0\in \llbracket 0,r-1\rrbracket$ such that $w_{j_0}$ is $H$-trapped. We want to apply Lemma \ref{S:l:delay_traps} replacing $w$ in the statement by $w_{j_0}$, $x_0$ by $\mathcal{Z}_{j_0}$ and $\Gamma$ by $\mathcal{N}_{j_0}$ (recall that the lemma was also true for a random choice of $x_0$ and $\Gamma$), which requires justifying that
\begin{align}\label{S:e:claim1}
\mathcal{Z}_{j_0}\in w_{j_0}+(-\infty,\delta H)\times [0,H').
\end{align}
Note that the localization of the history is obtained by construction, and assumptions 2 and 3 in Lemma \ref{S:l:delay_traps} are given by condition 3 in the statement of the proposition. In order to prove \eqref{S:e:claim1}, note that
\begin{align*}
    \pi_1(\mathcal{Z}_{j_0})
    &=\pi_1(y)+\sum_{j=0}^{j_0-1}\left(\pi_1(\mathcal{Z}_{j+1})-\pi_1(\mathcal{Z}_j)\right)\\
    &\leqslant \pi_1(y)+\left(v_++\frac{\delta}{2r}\right)j_0H&\mbox{using condition 2}\\
    &\leqslant \pi_1(w_{j_0})+\frac{\delta H}{4}+\frac{\delta H}{2}&\mbox{by definition of $w=\lfloor y\rfloor_H$ and $w_{j_0}=w+j_0H(v_+,1)$}\\
    &<\pi_1(w_{j_0})+\delta H.
\end{align*}
As for the second coordinate, by definition of $w$ we have
\begin{align*}
    \pi_2(\mathcal{Z}_{j_0})
    &=\pi_2(y)+j_0H\in \pi_2(w)+j_0H+[0,H')=\pi_2(w_{j_0})+[0,H')
\end{align*}
This proves \eqref{S:e:claim1}. Now, by applying Lemma \ref{S:l:delay_traps}, we get that
\begin{align}\label{S:e:localization_x_3}
    \pi_1(\tilde{\mathcal{Z}}_{j_0+1})=X^{\mathcal{Z}_{j_0},\mathcal{N}_{j_0}}_{\tau_{H,w_{j_0}}}\leqslant \pi_1(w_{j_0})+(v_+-2\delta)H.
\end{align}
Therefore, using that $D_{H'}^{\tilde{\mathcal{Z}}_{j_0+1},\tilde{\mathcal{N}}_{j_0+1}}$ occurs (condition 4), we get
\begin{align*}
    \pi_1(\mathcal{Z}_{j_0+1})
    &\leqslant \pi_1(\tilde{\mathcal{Z}}_{j_0+1})+\beta H'\\
    &\leqslant \pi_1(w_{j_0})+(v_+-2\delta)H+\beta H'\\
    &\leqslant \pi_1(w_{j_0})+(v_+-\delta)H,
\end{align*}
using \eqref{S:d:H:delay_threats}. Consequently, we have
\begin{align*}
    \pi_1(\mathcal{Z}_r)
    &=\pi_1(\mathcal{Z}_{j_0+1})+\sum_{j=j_0+1}^{r-1} \left(\pi_1(\mathcal{Z}_{j+1})-\pi_1(\mathcal{Z}_j)\right)\\
    &\leqslant \pi_1(w_{j_0})+(v_+-\delta)H+(r-j_0-1)\left(v_++\frac{\delta}{2r}\right)H&\mbox{using \eqref{S:e:localization_x_3} and condition 2}\\
    &\leqslant \pi_1(y)+j_0v_+H+(v_+-\delta)H+(r-j_0-1)\left(v_++\frac{\delta}{2r}\right)H&\mbox{}\\
    &\leqslant\pi_1(y)+rv_+H-\frac{\delta}{2}H\\
    &= \pi_1(y)+\left(v_+-\frac{\delta}{2r}\right)rH.
\end{align*}
\end{proof}

\subsection{Threatened paths}\label{S:ss:threatened_paths}
\ncSk{S:k:paths} \ncS{S:c:paths}

We now know that when a random walk passes near a threatened point, it will be delayed to the left with a high probability (Proposition \ref{S:p:delay_threats}), and that each point has a high probability of being threatened (Proposition \ref{S:p:proba_threats}). The goal of this section is to improve the latter result, by showing that with a high probability, every random walk meets a lot of threatened points along its way. Mind that this is not a direct consequence of Proposition \ref{S:p:proba_threats}, because the random walk might go precisely to areas where there are few threats. From now on, we will focus on specific values of the parameters introduced before: $H=hL_k$ with $k>\ucSk{S:k:paths}$ for a wise choice of $h\in\NN^*$ and $\ucSk{S:k:paths}\in\NN$, and $r=l_{\ucSk{S:k:paths}}.$

\begin{lemma}\label{S:l:paths}
There exists $\ucSk{S:k:paths}\in\NN$ and $\ucS{S:c:paths}=\ucS{S:c:paths}(\delta)>0$ such that the following conditions are met:
\begin{itemize}[leftmargin=*]
    \item $L_{\ucSk{S:k:paths}}\geqslant \ucSH{H:delay_threats};$
    \item For every $h\in\NN^*$,
    \begin{align}\label{S:e:inequality1}
        \PP\left(\exists\,y\in I_{hL_{\ucSk{S:k:paths}+1}},\;\lfloor y \rfloor_{hL_{\ucSk{S:k:paths}}}\text{ is not $(hL_{\ucSk{S:k:paths}},l_{\ucSk{S:k:paths}})$-threatened}\right)\leqslant \ucS{S:c:paths} L_{\ucSk{S:k:paths}+1}^{-(2\alpha-3)/10};
    \end{align}
        \item The following two technical requirements are satisfied
    \begin{align}\label{S:e:condition_supp}
    &49\beta l_{\ucSk{S:k:paths}}\leqslant \delta l_{\ucSk{S:k:paths}+1};\\ \label{S:e:condition_less_than_1}
    &\ucS{S:c:end_proof} (\ucS{S:c:end_proof}+\ucS{S:c:paths}^2)\,L_{\ucSk{S:k:paths}}^{-(6\alpha-49)/40}\leqslant \ucS{S:c:paths},
    \end{align}
    where $\ucS{S:c:end_proof}$ was defined in \eqref{S:e:constant_end_proof}.
\end{itemize}
\end{lemma}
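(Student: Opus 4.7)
The plan is to prove the main estimate \eqref{S:e:inequality1} by a union bound, then handle the three technical conditions by taking $\ucSk{S:k:paths}$ sufficiently large. The key observation is that although $I_{hL_{k+1}}$ contains order $hL_{k+1}(hL_{k+1})'$ points, the rounding operation $\lfloor\cdot\rfloor_{hL_k}$ collapses them onto a coarse grid whose effective cardinality in $I_{hL_{k+1}}$ is bounded \emph{independently of $h$}. This is what makes the bound scale-invariant in $h$ and lets us pull a clean factor of $L_{k+1}^{-(2\alpha-3)/10}$.

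First I would count: by Definition \ref{S:d:rounded_points}, as $y$ ranges over $I_{hL_{k+1}}$, $\lfloor y\rfloor_{hL_k}$ takes values in the grid $\lfloor\delta hL_k/4\rfloor\ZZ\times(hL_k)'\ZZ$, with at most $\bigl\lceil hL_{k+1}/\lfloor\delta hL_k/4\rfloor\bigr\rceil \cdot \bigl\lceil (hL_{k+1})'/(hL_k)'\bigr\rceil$ distinct outcomes. Using $L_{k+1}=l_kL_k$, this simplifies to at most $c(\delta)\,l_k^{3/2}$ outcomes, with $c(\delta)$ independent of $h$ and $k$. Now I would apply Proposition \ref{S:p:proba_threats} (which is valid since $hL_k\geqslant L_{\ucSk{S:k:paths}}\geqslant\ucSH{H:delay_threats}\geqslant\ucSH{H:traps}$) with $r=l_k$, followed by a union bound:
\begin{align*}
\PP\bigl(\exists\,y\in I_{hL_{k+1}},\;\lfloor y\rfloor_{hL_k}\text{ not $(hL_k,l_k)$-threatened}\bigr)\leqslant c(\delta)\,l_k^{3/2}\cdot \ucS{S:c:threats}\, l_k^{-\alpha}.
\end{align*}
Then I would convert $l_k^{3/2-\alpha}$ into the desired power of $L_{k+1}$: since $l_k\geqslant L_k^{1/4}-1\geqslant \tfrac{1}{2}L_k^{1/4}$ for $L_k$ large, and $L_{k+1}=l_kL_k\leqslant L_k^{5/4}$, we get $L_k\geqslant L_{k+1}^{4/5}$, hence $l_k^{3/2-\alpha}\leqslant 2^{\alpha-3/2}L_k^{-(2\alpha-3)/8}\leqslant 2^{\alpha-3/2}L_{k+1}^{-(2\alpha-3)/10}$. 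This identifies a candidate value of $\ucS{S:c:paths}$ depending only on $\delta$, $\beta$, $\alpha$, and $\ucS{S:c:threats}$.

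Finally I would choose $\ucSk{S:k:paths}$ large enough to satisfy all three remaining requirements simultaneously. Condition $L_{\ucSk{S:k:paths}}\geqslant\ucSH{H:delay_threats}$ is clearly achievable since $L_k\to\infty$. Condition \eqref{S:e:condition_supp}, $49\beta l_k\leqslant\delta l_{k+1}$, holds for $k$ large because $l_{k+1}/l_k=\lfloor L_{k+1}^{1/4}\rfloor/\lfloor L_k^{1/4}\rfloor\sim L_k^{1/16}\to\infty$. Condition \eqref{S:e:condition_less_than_1}, which has the form $\ucS{S:c:end_proof}(\ucS{S:c:end_proof}+\ucS{S:c:paths}^2)L_k^{-(6\alpha-49)/40}\leqslant\ucS{S:c:paths}$, is satisfied for $k$ large since $\alpha>12>49/6$ makes the exponent strictly negative and the left-hand side tends to $0$. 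Taking $\ucSk{S:k:paths}$ to be the maximum of the thresholds imposed by all three conditions completes the proof.

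The main subtlety, rather than an obstacle, is being careful that $\ucS{S:c:paths}$ is fixed \emph{before} choosing $\ucSk{S:k:paths}$: the union-bound computation above determines $\ucS{S:c:paths}$ purely in terms of $\delta$, $\alpha$, and the constants supplied by Proposition \ref{S:p:proba_threats}, with no circular dependence on $\ucSk{S:k:paths}$, so the remaining freedom in $\ucSk{S:k:paths}$ can then absorb \eqref{S:e:condition_supp} and \eqref{S:e:condition_less_than_1}.
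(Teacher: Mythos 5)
Your proof is correct and follows the paper's argument essentially step for step: you count the distinct values of $\lfloor y\rfloor_{hL_{\ucSk{S:k:paths}}}$ for $y$ ranging over $I_{hL_{\ucSk{S:k:paths}+1}}$ (getting a bound of order $l_{\ucSk{S:k:paths}}^{3/2}$ independent of $h$), apply Proposition~\ref{S:p:proba_threats} with $r=l_{\ucSk{S:k:paths}}$ and a union bound, convert exponents via $L_{\ucSk{S:k:paths}+1}\leqslant L_{\ucSk{S:k:paths}}^{5/4}$, and then observe that $\ucS{S:c:paths}$ is determined independently of $\ucSk{S:k:paths}$ so that the remaining freedom in $\ucSk{S:k:paths}$ absorbs the three side conditions. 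This is exactly the paper's proof.
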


\begin{proof}
Let $h\in\NN^*$ and $\ucSk{S:k:paths}\in\NN$ satisfying $L_{\ucSk{S:k:paths}}\geqslant \ucSH{H:delay_threats}$ and \eqref{S:e:condition_supp}. Then, using Proposition \ref{S:p:proba_threats} and the fact that $\ucSH{H:delay_threats}\geqslant\ucSH{H:traps}$, we have
\begin{align*}
    \PP&\left(\exists\,y\in I_{hL_{\ucSk{S:k:paths}+1}},\;\lfloor y \rfloor_{hL_{\ucSk{S:k:paths}}}\text{ is not $(hL_{\ucSk{S:k:paths}},l_{\ucSk{S:k:paths}})$-threatened}\right)\\
    &\leqslant \left\lceil \frac{hL_{\ucSk{S:k:paths}+1}}{\left\lfloor \frac{\delta hL_{\ucSk{S:k:paths}}}{4}\right\rfloor}\right\rceil\left\lceil \frac{(hL_{\ucSk{S:k:paths}+1})'}{(hL_{\ucSk{S:k:paths}})'}\right\rceil\,\ucS{S:c:threats}\,l_{\ucSk{S:k:paths}}^{-\alpha}\\
    &\leqslant c\,L_{\ucSk{S:k:paths}+1}^{-(2\alpha-3)/10}.
\end{align*}
Therefore, we do get inequality \eqref{S:e:inequality1} with some constant $\ucS{S:c:paths}=\ucS{S:c:paths}(\delta)>0.$ Now that $\ucS{S:c:paths}$ is fixed, it suffices to take a larger $\ucSk{S:k:paths}$ so that inequality \eqref{S:e:condition_less_than_1} holds as well, which is possible because $\alpha\geqslant 9.$
\end{proof}

Conditions \eqref{S:e:condition_supp} and \eqref{S:e:condition_less_than_1} will appear naturally later in the proof. Also, note that considering only rounded points $\lfloor y\rfloor_{hL_{\ucSk{S:k:paths}}}$ was crucial here to obtain a bound that is uniform in $h$. 

\begin{definition}\label{S:d:density}
Let $\ucSk{S:k:paths}$ be defined as in Lemma \ref{S:l:paths}. Let $k>\ucSk{S:k:paths}$, $w\in\RR\times\ZZ$, $h\in\NN^*$ and $y\in \ZZ^2$ such that $\pi_2(y)\in [\pi_2(w),\pi_2(w)+(hL_k)')$. We set the threatened density of random walk $Z^{y}$ to be
\begin{align}\label{S:e:def_density}
    D_{h,k}^{y}(w)=\frac{L_{\ucSk{S:k:paths}+1}}{L_k} \,\#\left\{0\leqslant j<\frac{L_k}{L_{\ucSk{S:k:paths}+1}},\;\left\lfloor Z^{y}_{\tau_{jhL_{\ucSk{S:k:paths}+1},w}}\right\rfloor_{hL_{\ucSk{S:k:paths}}} \text{ is $(hL_{\ucSk{S:k:paths}},l_{\ucSk{S:k:paths}})$-threatened}\right\}.\end{align}
\end{definition}

As usual, we also set $D_{h,k}^y=D_{h,k}^y(o).$

\begin{remark}\label{S:r:redundancies}
    Mind that contrary to what is depicted in Figure \ref{S:f:density}, we could have $\pi_1(y)-\pi_1(w)>hL_{\ucSk{S:k:paths}+1}.$ In that case, whenever $jhL_{\ucSk{S:k:paths}+1}\leqslant \pi_1(y)-\pi_1(w)$, $\tau^y_{jhL_{\ucSk{S:k:paths}+1},w}=0$, so $Z^{y}_{\tau_{jhL_{\ucSk{S:k:paths}+1},w}}=y$, which causes redundancies. The number of such indices will have to be upper-bounded.
\end{remark}

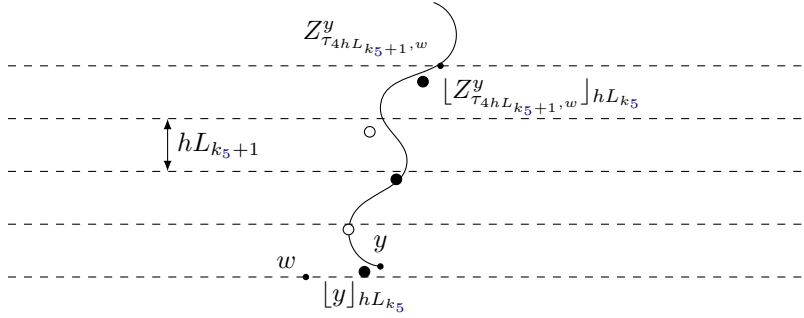
\begin{figure}[!h]
  \begin{center}
    \begin{tikzpicture}[use Hobby shortcut, scale=0.7]
        \draw[fill] (2,0) circle (.05);
        \draw[above] (2,0.1) node {$y$};
        \draw (2,0) .. (1.5,1) .. (2.5,2) .. (2,3) .. (3.3,4) .. (3,5);
        \draw[dashed] (-5,-0.2) -- (10,-0.2);
        \draw[dashed] (-5,0.8) -- (10,0.8);
        \draw[dashed] (-5,1.8) -- (10,1.8);
        \draw[dashed] (-5,2.8) -- (10,2.8);
        \draw[dashed] (-5,3.8) -- (10,3.8);
        \draw[fill] (3.13,3.8) circle (.05);
        \draw[above left] (3.13,3.8) node {$Z^y_{\tau_{4hL_{\ucSk{S:k:paths}+1},w}}$};
        \draw[<->,>=latex] (-2,1.8) -- (-2,2.8);
        \draw[right] (-2,2.3) node {$hL_{\ucSk{S:k:paths}+1}$};
        
        \draw[fill, below] (1.7,-0.1) circle (.1) node{$\lfloor y\rfloor_{hL_{\ucSk{S:k:paths}}}$};
        \draw[fill, above left] (0.6,-0.2) circle (.05) node {$w$};
        \draw[fill=white, above left] (1.4,0.7) circle (.1);
        \draw[fill, below left] (2.3,1.65) circle (.1);
        \draw[fill=white, below left] (1.8,2.55) circle (.1);
        \draw[fill, below right] (2.8,3.5) circle (.1);
        \draw[fill, below right] (2.95,3.75) node {$\lfloor Z^y_{\tau_{4hL_{\ucSk{S:k:paths}+1},w}}\rfloor_{hL_{\ucSk{S:k:paths}}}$};
    \end{tikzpicture}
    \caption{Illustration of a point $y\in I_{hL_k}(w)$ whose density is more than $1/2$: there are more threatened points (the large filled circles) than non-threatened points (the empty large circles) along the way.}
    \label{S:f:density}
    \end{center}
\end{figure}

Let us now state our final proposition before ending the proof of Lemma \ref{S:l:v_-=v_+}: with a high probability, our random walks encounter threats more than half of the time along the way.
\begin{proposition}\label{S:p:density}
For every $k>\ucSk{S:k:paths}$ and $h\in\NN^*,$
$$\PP(\exists\, y\in I_{hL_k},\; D_{h,k}^y<1/2)\leqslant \ucS{S:c:paths}\,L_k^{-(2\alpha-3)/10}.$$
\end{proposition}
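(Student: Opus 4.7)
The plan is to induct on $k\geqslant \ucSk{S:k:paths}+1$ following the multi-scale renormalization scheme of Section \ref{S:sss:renormalization_scheme}, with Lemma \ref{S:l:paths} providing the base case and Fact \ref{S:p:mixing} driving each induction step. The base case $k=\ucSk{S:k:paths}+1$ is direct: the sum in \eqref{S:e:def_density} has a single term, so $\{D_{h,\ucSk{S:k:paths}+1}^{y}<1/2\}$ coincides with $\{\lfloor y\rfloor_{hL_{\ucSk{S:k:paths}}}\text{ is not $(hL_{\ucSk{S:k:paths}},l_{\ucSk{S:k:paths}})$-threatened}\}$, to which Lemma \ref{S:l:paths} applies; translation invariance (Corollary \ref{S:c:invariance}) extends uniformity to all $w$.

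\textbf{Threshold adjustment.} The naive pigeonhole ``$D_{h,k+1}^{y}<1/2$ forces two sub-block densities below $1/2$'' is \emph{false}: a single sub-block at density $0$ can compensate $l_k-1$ others at density slightly above $1/2$. I would therefore strengthen the inductive predicate by introducing a threshold sequence $\theta_{\ucSk{S:k:paths}+1}=3/4$ and $\theta_{k+1}=(1-2/l_k)\theta_k$; because $\sum_{i\geqslant \ucSk{S:k:paths}+1}1/l_i<\infty$ (the $L_i$ grow super-geometrically), enlarging $\ucSk{S:k:paths}$ if necessary, $\theta_k\downarrow \theta_{\infty}\geqslant 1/2$. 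It will then suffice to prove
\[
Q_k:=\sup_{h\in\NN^*,\,w\in\RR\times\ZZ}\PP\left(\exists\,y\in I_{hL_k}(w),\,D_{h,k}^{y}(w)<\theta_k\right)\leqslant \ucS{S:c:paths}\,L_k^{-(2\alpha-3)/10},
\]
since $\{D_{h,k}^{y}<1/2\}\subseteq\{D_{h,k}^{y}<\theta_k\}$.

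\textbf{Induction step.} Partition the $L_{k+1}/L_{\ucSk{S:k:paths}+1}$ milestones along $Z^{y}$ into $l_k$ sub-blocks according to heights $[jhL_k,(j+1)hL_k)$, and write $D_{k,j}^{y}$ for the corresponding sub-densities, so that $\sum_j D_{k,j}^{y}=l_k\,D_{h,k+1}^{y}$. The choice $\theta_{k+1}=(1-2/l_k)\theta_k$ activates the pigeonhole: if $D_{h,k+1}^{y}<\theta_{k+1}$ then $\sum_j D_{k,j}^{y}<(l_k-2)\theta_k$, so at least three sub-blocks satisfy $D_{k,j}^{y}<\theta_k$, and from any three such indices one extracts a pair $j_1<j_2$ with $j_2-j_1\geqslant 2$, yielding vertical separation $\geqslant hL_k$ between their grid positions. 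Imitating the proof of Lemma \ref{S:l:estimate_to_show}, I would introduce localization and cut-line events $\mathcal{F}_k$, $\mathcal{G}_k$ modeled on \eqref{S:e:eventF}--\eqref{S:e:eventG}, whose complements have probability $\leqslant c^{-1}e^{-c(hL_k)^{1/2}}$. On $\mathcal{F}_k\cap\mathcal{G}_k$, the cut-line point $\xi_{j_i}=\Theta_{j_ihL_k}Z^{y}_{T(\Theta_{j_ihL_k}Z^{y})}$ carries zero history and lies in $I_{hL_k}(w_i)$ for some $w_i\in\CCC_k$; crucially, the milestones contributing to $D_{h,k}^{\xi_{j_i}}(w_i)$ coincide with those of the $j_i$-th sub-block of $Z^{y}$ (same heights on the same sample path past the cut line), so $D_{h,k}^{\xi_{j_i}}(w_i)<\theta_k$. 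Each of the resulting events $\{\exists\,\tilde{y}\in I_{hL_k}(w_i),\,D_{h,k}^{\tilde{y}}(w_i)<\theta_k\}$ is measurable with respect to $B_{hL_k}(w_i)$, and Fact \ref{S:p:mixing} applies with $h=hL_k/2$. Summing over $(w_1,w_2)\in\CCC_k^2$ (with $|\CCC_k|\leqslant \ucS{S:c:cardinality}l_k^2$) and absorbing the exponential tails into the mixing error as in \eqref{S:e:constant_end_proof} yields
\[
Q_{k+1}\leqslant \ucS{S:c:end_proof}\,l_k^{4}\left(Q_k^{2}+\ucS{S:c:end_proof}\,L_k^{-\alpha}\right).
\]
Plugging the induction hypothesis and using $l_k\leqslant L_k^{1/4}$, $L_{k+1}\leqslant L_k^{5/4}$, a direct computation yields
\[
\frac{Q_{k+1}}{\ucS{S:c:paths}\,L_{k+1}^{-(2\alpha-3)/10}}\leqslant \frac{\ucS{S:c:end_proof}(\ucS{S:c:end_proof}+\ucS{S:c:paths}^{2})}{\ucS{S:c:paths}}\,L_k^{-(6\alpha-49)/40}\leqslant 1
\]
by condition \eqref{S:e:condition_less_than_1} (noting $\alpha>12>49/6$), closing the induction.

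\textbf{Main obstacle.} The principal subtlety is the pigeonhole step: the identity $\sum_j D_{k,j}^{y}=l_k\,D_{h,k+1}^{y}$ together with $D_{k,j}^{y}\in[0,1]$ does not by itself extract two bad sub-blocks from one bad average, so a purely linear recursion in $Q_k$ would not close the desired exponent. The threshold trick forces at least three bad sub-blocks while keeping $\theta_\infty\geqslant 1/2$, which is what makes the proposition accessible to the renormalization architecture already developed in Section \ref{S:ss:proof_est}.
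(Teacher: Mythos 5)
Your overall architecture is the same as the paper's: a renormalization induction starting from Lemma \ref{S:l:paths}, a vanishing-threshold sequence to close the pigeonhole step, and a mixing bound that reproduces the recursion $Q_{k+1}\leqslant \ucS{S:c:end_proof}\,l_k^4\,(Q_k^2+\ucS{S:c:end_proof} L_k^{-\alpha})$ of \eqref{S:e:constant_end_proof}. The paper uses the additive sequence $\rho_{\ucSk{S:k:paths}}=1$, $\rho_{k+1}=\rho_k-5/l_k$ and an argument by contraposition, but this is cosmetically the same as your multiplicative $\theta_{k+1}=(1-2/l_k)\theta_k$.

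There are, however, two concrete gaps in your transfer step from the block densities $D_{k,j}^y$ to the zero-history densities $D_{h,k}^{\xi_{j}}(w_j)$.

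First, the redundancy issue of Remark \ref{S:r:redundancies}. You write that "the milestones contributing to $D_{h,k}^{\xi_{j_i}}(w_i)$ coincide with those of the $j_i$-th sub-block of $Z^y$ (same heights on the same sample path past the cut line), so $D_{h,k}^{\xi_{j_i}}(w_i)<\theta_k$." The parenthetical already concedes the point: for the indices $i$ with $ihL_{\ucSk{S:k:paths}+1}\leqslant \pi_2(\xi_{j_i})-\pi_2(w_i)$, the milestone of $Z^{\xi_{j_i}}$ is $\xi_{j_i}$ itself, whereas the corresponding milestone of $Z^y$ in block $j_i$ is a genuinely different point. On $\mathcal{G}_k$ there are at most $2 L_k^{1/2}/L_{\ucSk{S:k:paths}+1}$ such indices, so what you actually get from $D_{k,j_i}^y<\theta_k$ is only $D_{h,k}^{\xi_{j_i}}(w_i)<\theta_k+2L_k^{-1/2}$, not $<\theta_k$. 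As stated, the inductive event $S_{h,k}(w_i)=\{\exists\,\tilde y,\,D_{h,k}^{\tilde y}(w_i)<\theta_k\}$ need not occur, and the induction does not close. The paper works in the contrapositive direction (good sub-densities along $Z^{\xi_j}$ propagate to good block densities along $Z^y$ up to the same $2L_k^{-1/2}$ loss) and absorbs this loss into the threshold decrement: $\rho_{k+1}=\rho_k-5/l_k$ leaves room for both the $3/l_k$ block loss and the redundancy loss, since $2/L_k^{1/2}<2/l_k$. Your $(1-2/l_k)\theta_k$ decrement has no room for the extra $2L_k^{-1/2}$ term.

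Second, the $j=0$ block. Your pigeonhole sums over all $j\in\llbracket 0,l_k-1\rrbracket$, so the three extracted bad sub-blocks may include $j=0$, for which no cut-line point $\xi_0^y$ with zero history and controlled location is available (the starting height $\pi_2(y)\in[0,(hL_{k+1})')$ is not on the $hL_k$-grid, and the history is already $\Gamma=\mathbf 0$ only because $y$ is a fresh start, not a restart). The paper's inclusion \eqref{S:e:inclusion37} extracts three indices $j\in\llbracket 1,l_k-1\rrbracket$ only, at the cost of the factor $l_k-3$ in place of $l_k-2$. If you restrict your pigeonhole to $j\geqslant 1$ you would need $\theta_{k+1}\leqslant(1-3/l_k)\theta_k$, and then still need to subtract the redundancy term. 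In short, the correct recursion is of the form $\theta_{k+1}\leqslant(1-3/l_k)\theta_k-2L_k^{-1/2}$, which is exactly what the paper's additive choice $\rho_{k+1}=\rho_k-5/l_k$ is designed to dominate. With these two adjustments your argument would match the paper's.
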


\begin{proof}
The proof uses again the renormalization method presented in Section \ref{S:sss:renormalization_scheme} and is very similar to that of Lemma \ref{S:p:estimates_on_p}. Let us fix $k>\ucSk{S:k:paths}$ and $h\in\NN^*$. We define a sequence of densities $(\rho_k)_{k\geqslant \ucSk{S:k:paths}}$ by setting 
$$\left\{\begin{array}{l} \rho_{\ucSk{S:k:paths}}=1;\\
\forall k\geqslant \ucSk{S:k:paths},\;\rho_{k+1}=\rho_k-\frac{5}{l_k}.
\end{array}\right.$$
One can check, using a computational knowledge engine, that since $L_0\geqslant 10^{10},$ we have $\sum_{k\geqslant 1} \frac{5}{l_k}\leqslant \frac{1}{2},$ therefore we have $\rho_k\geqslant 1/2$ for every $k\geqslant \ucSk{S:k:paths}.$ We define, for $w\in\RR\times\ZZ$, $$S_{h,k}(w)=\{\exists\,y\in I_{hL_k}(w),\;D_{h,k}^y(w)\leqslant\rho_k\}.$$
Since $\rho_k\geqslant 1/2$, it suffices to show that $s_{h,k}=\PP(S_{h,k}(o))$ satisfies
\begin{align}\label{S:e:estimate_density_to_show}
    s_{h,k}\leqslant \ucS{S:c:paths}\,L_k^{-(2\alpha-3)/10}.
\end{align}
To do this, we use induction on $k>\ucSk{S:k:paths}$.

\textsc{Base case.} When $k=\ucSk{S:k:paths}+1$, the result follows directly from Definition \ref{S:d:density} and Lemma \ref{S:l:paths}.

\textsc{Induction step.} Assume that \eqref{S:e:estimate_density_to_show} has been shown for a fixed $k>\ucSk{S:k:paths}$, and fix $y\in I_{hL_{k+1}}.$ Recall the definitions of $\CCC_k$, $\mathcal{F}_k$ and $\mathcal{G}_k$, from \eqref{S:d:CCC}, \eqref{S:e:eventF} and \eqref{S:e:eventG}, where $H_k$ is replaced by $hL_k$. Recall also notation $\xi_j^y$ used in \eqref{S:e:zeta}. Note that in the definitions of $\mathcal{F}_k$ and $\mathcal{G}_k$, we will not use the part with events $D$ (because contrary to Lemma \ref{S:p:estimates_on_p}, here we are not looking at horizontal displacements).

We set $\mathcal{Z}_j=Z^y_{\tau_{jH_k}}$ to make notations lighter. We claim that
\begin{align}\label{S:e:inclusion37}
    \mathcal{G}_k\cap \left\{D_{h,k+1}^y\leqslant \rho_{k+1}\right\}
    \subseteq \left\{\text{there exist three $j\in\llbracket 1,l_k-1\rrbracket$ such that $D^{\xi_{j}^y}_{h,k}(\mathcal{Z}^y_j)\leqslant \rho_k$}\right\}
\end{align}
(note that here again, we put aside $j=0$). Indeed, suppose that $\mathcal{G}_k$ occurs but it is not the case that there exist three $j\in\llbracket 1,l_k-1\rrbracket$ such that $D^{\xi_{j}^y}_{h,k}(\mathcal{Z}^y_j)\leqslant \rho_k$. This means that for $l_k-3$ values of $j\in \llbracket 1,l_k-1\rrbracket$, we have $D^{\xi_j^y}_{h,k}(\mathcal{Z}^y_j)>\rho_k$, which means
    $$\#\left\{0\leqslant i<\frac{L_k}{L_{\ucSk{S:k:paths}+1}},\,\left\lfloor Z^{\xi^y_j}_{\tau_{ihL_{\ucSk{S:k:paths}+1},\mathcal{Z}_j}}\right\rfloor_{hL_{\ucSk{S:k:paths}}}\text{ is $(hL_{\ucSk{S:k:paths}},l_{\ucSk{S:k:paths}})$-threatened}\right\}>\rho_k\frac{L_k}{L_{\ucSk{S:k:paths}+1}}.$$
    Recall Remark \ref{S:r:redundancies}: we need to control redundancies. Now, on $\mathcal{G}_k$, there are at most $\frac{(hL_k)'}{hL_{\ucSk{S:k:paths}+1}}\leqslant 2\frac{L_k^{1/2}}{L_{\ucSk{S:k:paths}+1}}$ indices $i\in\llbracket 0,L_k/L_{\ucSk{S:k:paths}+1}\rrbracket$ such that $Z^{\xi_j^y}_{\tau_{ihL_{\ucSk{S:k:paths}+1},\mathcal{Z}_j}}=\xi_j^y$ (indeed, this occurs only when $ihL_{\ucSk{S:k:paths}+1}\leqslant \pi_2(\xi^y_j)-\pi_2(\mathcal{Z}^y_j)\leqslant (hL_k)')$. Therefore, 
    $$\#\left\{0\leqslant i<\frac{L_k}{L_{\ucSk{S:k:paths}+1}},\,\left\lfloor \Theta_{jhL_k} Z^y_{\tau_{ihL_{\ucSk{S:k:paths}+1},\mathcal{Z}_j}}\right\rfloor_{hL_{\ucSk{S:k:paths}}}\text{ is $(hL_{\ucSk{S:k:paths}},l_{\ucSk{S:k:paths}})$-threatened}\right\}>\rho_k\frac{L_k}{L_{\ucSk{S:k:paths}+1}}-2\frac{L_k^{1/2}}{L_{\ucSk{S:k:paths}+1}}.$$
    In the end,
    \begin{align*}
        D_{h,k+1}^y&=\frac{L_{\ucSk{S:k:paths}+1}}{L_{k+1}}\,\#\left\{0\leqslant j<\frac{L_{k+1}}{L_{\ucSk{S:k:paths}+1}},\;\left\lfloor Z^y_{\tau_{jhL_{\ucSk{S:k:paths}+1},\mathcal{Z}_j}}\right\rfloor_{hL_{\ucSk{S:k:paths}}}\text{ is $(hL_{\ucSk{S:k:paths}},l_{\ucSk{S:k:paths}})$-threatened}\right\}\\
        &\geqslant \frac{L_{\ucSk{S:k:paths}+1}}{L_{k+1}}\,(l_k-3)\left(\rho_k\frac{L_k}{L_{\ucSk{S:k:paths}+1}}-2\frac{L_k^{1/2}}{L_{\ucSk{S:k:paths}+1}}\right)\\
        &=\left(1-\frac{3}{l_k}\right)\, \left(\rho_k-\frac{2}{L_k^{1/2}}\right)\\
        &>\rho_k-\frac{5}{l_k}=\rho_{k+1},
    \end{align*}
    which proves \eqref{S:e:inclusion37}.
    
    Now, note that on $\mathcal{F}_k\cap \mathcal{G}_k$, for every $j\in\llbracket 0,l_k-1\rrbracket$, $\xi_j^y$ is in a $I_{hL_k}(w)$ with $w\in\CCC_k.$ Therefore, in a similar way as in \eqref{S:pourplustard}, we get
    $$\mathcal{F}_{k}\cap \mathcal{G}_k\cap S_{h,k+1}\subseteq \bigcup_{\underset{|\pi_2(w_1)-\pi_2(w_2)|\geqslant 2 hL_k}{w_1,w_2\in \mathcal{C}_k}}\begin{array}{c} (S_{h,k}(w_1)\,\cap\,F_{hL_k}(w_1))\\\cap\,(S_{h,k}(w_2)\,\cap\,F_{hL_k}(w_2)).\end{array}$$
    Recall constant $\ucS{S:c:end_proof}$ from \eqref{S:e:constant_end_proof}. Here again we get
    $$s_{h,k+1}\leqslant \ucS{S:c:end_proof}\, l_k^{4}\,(s_{h,k}^2+\ucS{S:c:end_proof} L_k^{-\alpha}),$$
    and so, using the induction assumption and \eqref{S:e:condition_less_than_1}, we get
    \begin{align*}
        \frac{s_{h,k+1}}{L_{k+1}^{-(2\alpha-3)/10}} &\leqslant \ucS{S:c:end_proof} (\ucS{S:c:end_proof}+\ucS{S:c:paths}^2)\,L_k^{(2\alpha-3)/8}\, l_k^{4}\,L_k^{-(2\alpha-3)/5}\\
        &\leqslant \ucS{S:c:end_proof} (\ucS{S:c:end_proof}+\ucS{S:c:paths}^2)\,L_k^{-(6\alpha-49)/40}\leqslant \ucS{S:c:paths}.
    \end{align*}
    This concludes the induction and thus the proof of \eqref{S:e:estimate_density_to_show}.
\end{proof}

\subsection{Final proof of Lemma \ref{S:l:v_-=v_+}.}\label{S:ss:final_proof}
Recall that we argued by contradiction and assumed that $v_-<v_+$, therefore $\delta=\frac{v_+-v_-}{5(\beta+1)}>0$. Let $\eta=\frac{\delta}{4 l_{\ucSk{S:k:paths}}}>0$ where $\ucSk{S:k:paths}$ is defined as in Lemma \ref{S:l:paths}. We are going to show that
\begin{align}\label{S:e:contradiction}
p_{L_k^2}\left(v_+-\frac{\eta}{6}\right)\xrightarrow[k\to\infty]{} 0,
\end{align}
which contradicts the definition of $v_+$. From now on, we fix $k>\ucSk{S:k:paths}+1$, and we work with $h=h_k=L_k,$ which is why it was important for our previous estimates to hold uniformly on $h\geqslant 1.$ We let $H_k=h_k L_{\ucSk{S:k:paths}}=L_k L_{\ucSk{S:k:paths}}$ and $r=l_{\ucSk{S:k:paths}}$.
In order to prove \eqref{S:e:contradiction}, we consider the large box $B_{L_k^2}$, which we pave using small sub-boxes $B_{H_k}(w)$ for $w\in\hat{\CCC}_k$, where 
$$\hat{\CCC}_k=\{(iH_k,jH_k)\in H_k \ZZ^2,\,-\lceil\beta L_k/L_{\ucSk{S:k:paths}}\rceil\leqslant j<\lceil(\beta+1)L_k/L_{\ucSk{S:k:paths}}\rceil,\,0 \leqslant j<L_k/L_{\ucSk{S:k:paths}}\}.$$
As usual, we have
$$\underset{w\in\hat{\CCC}_k}{\bigcup} \mathcal{I}_{H_k}(w)\supseteq B_{L_k^2}\cap (\ZZ\times H_k \ZZ).$$
Recall Proposition \ref{S:p:delay_threats}. In order to use it, we define the following events:
\begin{align*}
&\hat{\mathcal{A}}_k= \bigcap_{w\in\hat{\CCC}_k} A_{H_k,w}(v_++\eta)^c;\\
&\hat{\mathcal{F}}_k= F_{L_k^2}\cap \bigcap_{y\in I_{L_k^2}} \left(D_{rH_k}^y\cap\bigcap_{i=1}^{L_k/L_{\ucSk{S:k:paths}}-1}\,\bigcap_{j=0}^{r} \Theta_{(ir+j)H_k}^y D_{4H_k'}^y \cap \Theta_{(ir+j)H_k+4H_k'}^y E_{H_k'}^y\cap \Theta^w_{(ir+j)H_k} D_{H_k'}^y\right);\\
&\hat{\mathcal{G}}_k= \bigcap_{y\in I_{L_k^2}} \bigcap_{i=1}^{L_k/L_{\ucSk{S:k:paths}}-1}\,\bigcap_{j=0}^{r-1} \left(\Theta_{(ir+j)H_k}^y D_{H_k'}^y\cap\left\{\Xi\left(\Theta_{(ir+j)H_k}^y Z^y\right)<H_k'\right\}\right);\\
&\hat{\mathcal{H}}_k= \bigcap_{y\in I_{L_k^2}}\left\{D^y_{L_k,k}\geqslant 1/2\right\}.
\end{align*}

Using Lemma \ref{S:p:estimates_on_p} and the fact that $\alpha>8$, we have
\begin{align}\label{S:e:proba_A_0}
\PP(\hat{\mathcal{A}}_k^c)\leqslant c \left(\frac{L_k}{L_{\ucSk{S:k:paths}}}\right)^2 \ucS{S:c:deviation}(\eta) H_k^{-\alpha/4}\leqslant c L_k^{-(\alpha-8)/4}\xrightarrow[k\to\infty]{} 0.
\end{align}
Using Propositions \ref{S:p:independence}, \ref{S:p:horizontal_bounds} and \ref{S:p:probability_of_going_down}, we have
\begin{align}\label{S:e:proba_F_0}
\PP(\hat{\mathcal{F}}_k^c)
\leqslant \ucS{S:c:box}^{-1} e^{-\ucS{S:c:box} L_k}+ c L_k^4 \left(\ucS{S:c:ball}^{-1} e^{-\ucS{S:c:ball} H_k^{1/2}}+\ucS{S:c:ball}^{-1} e^{-4\ucS{S:c:ball} H_k^{1/2}} +   e^{-\ucS{S:c:proba_of_going_down} H_k^{1/2}}\right)\xrightarrow[k\to\infty]{}0.
\end{align}
Using Propositions \ref{S:p:independence}, \ref{S:p:horizontal_bounds} and Assumption \ref{S:a:cut_line}, we have
\begin{align}\label{S:e:proba_G_0}
    \PP(\hat{\mathcal{G}}_k^c)\leqslant cL_k^4 \left(\ucS{S:c:ball}^{-1}e^{-2\ucS{S:c:ball} H_k^{1/2}}+ \ucS{S:c:cut_line}^{-1} e^{-\ucS{S:c:cut_line} H_k^{1/2}} \right)\xrightarrow[k\to\infty]{} 0.
\end{align}

By Proposition \ref{S:p:density}, we have, using that $\alpha\geqslant 2,$
\begin{align}\label{S:e:proba_H_0}
\PP(\hat{\mathcal{H}}_k^c)\leqslant \ucS{S:c:paths}\,L_k^{-(2\alpha-3)/10}\xrightarrow[k\to\infty]{} 0.
\end{align}
To make things easier to reed, for $y\in I_{L_k^2}$ and $1\leqslant i\leqslant L_k/L_{\ucSk{S:k:paths}+1}$, we set $$\mathscr{Z}^y_i=Z^y_{\tau_{irH_k}}\text{ and }\mathscr{N}^y_i=N^y_{\tau_{irH_k}}.$$
The goal now is to show that on the four events defined above, we have, for every $y\in I_{L_k^2}$,
\begin{align}\label{S:e:inclusionFG}
    V^y_{L_k^2}=\frac{1}{L_k^2}\left(\pi_1\left(\mathscr{Z}^y_{L_k/L_{\ucSk{S:k:paths}+1}}\right)-\pi_1(y)\right)<v_+-\eta/3.
\end{align}
First note that since $\lceil (L_k^2)^{1/2}\rceil=L_k<H_k<rH_k$, we have $\tau_{irH_k}>0$ for every $i\geqslant 1$. Therefore, we will only isolate $i=0$ and simply use $D_{rH_k}^y$ to bound the displacement $\pi_1(\mathscr{Z}^y_{1})-\pi_1(y)$. Let us now focus on bounding  $\pi_1(\mathscr{Z}^y_{i+1})-\pi_1(\mathscr{Z}^y_i)$ where $1\leqslant i<L_k/L_{\ucSk{S:k:paths}+1}$. First note that $\hat{\mathcal{A}}_k$ along with $F_{L_k^2}$ and $\hat{\mathcal{G}}_k$ allow us to bound the displacements of the random walk, similarly to what we did in Section \ref{S:sss:interpolation}. We have, for $1\leqslant i<L_k/L_{\ucSk{S:k:paths}+1}$ and $0\leqslant j<r$,
\begin{align}
X^y_{\tau_{(ir+j+1)H_k,y}}-X^y_{\tau_{(ir+j)H_k,y}}&\leqslant \beta H_k'+ (v_++\eta) H_k\nonumber\\
&\leqslant \left(v_++\frac{3\eta}{2}\right) H_k&\mbox{using \eqref{S:e:condition_supp}}\label{S:new_bound3}\\
&\leqslant \left(v_++\frac{\delta}{2r}\right) H_k&\mbox{by definition of $\eta$.}\label{S:new_bound}
\end{align}
We can use \eqref{S:new_bound3} for indices $i$ such that $\lfloor \mathscr{Z}^y_i\rfloor_{H_k}$ is not $(H_k,r)$-threatened. As for the remaining indices $i$, we will use Proposition \ref{S:p:delay_threats}, replacing in the statement of the proposition $H$ by $H_k$, $y$ by $\mathscr{Z}^y_i$ and $\Gamma$ by $\mathscr{N}^y_{i}$. Assumption 2 in Proposition \ref{S:p:delay_threats} is satisfied using \eqref{S:new_bound}. Assumptions 3 and 4 in Proposition \ref{S:p:delay_threats} are satisfied using event $\hat{\mathcal{F}}_k$. Again, the assumption on the history is automatically obtained. At the end of the day, Proposition \ref{S:p:delay_threats} ensures that for indices $i$ such that  $\lfloor \mathscr{Z}^y_i\rfloor_{H_k}$ is $(H_k,r)$-threatened, we have
\begin{align}\label{S:e:boundd}
\pi_1(\mathscr{Z}^y_{i+1})-\pi_1(\mathscr{Z}^y_{i})\leqslant \left(v_+-\frac{\delta}{2r}\right)rH_k.
\end{align}

Denote by $J^y_k$ the set of indices $i\in\left\llbracket 1,L_k/L_{\ucSk{S:k:paths}+1}-1\right\rrbracket $ such that $\left\lfloor \mathscr{Z}^y_i\right\rfloor_{H_k}$ is $(H_k,r)$-threatened. By Definition \ref{S:d:density}, we have the inclusion of events
\begin{align}\label{S:e:inclusion_density}
\hat{\mathcal{H}}_k\subseteq \bigcap_{y\in I_{L_k^2}} \left\{|J^y_k|\geqslant\frac{L_k}{2 L_{\ucSk{S:k:paths}+1}}-1\right\}.
\end{align}

Therefore,
\begin{align*}
    &\pi_1\left(\mathscr{Z}^y_{L_k/L_{\ucSk{S:k:paths}+1}}\right)-\pi_1(y)\\
    &=\pi_1(\mathscr{Z}^y_1)-\pi_1(y)+\sum_{i=1}^{L_k/L_{\ucSk{S:k:paths}+1}-1} \left(\pi_1( \mathscr{Z}^y_{i+1})- \pi_1(\mathscr{Z}^y_{i})\right)\\
    &\leqslant \beta r H_k +\sum_{i\in J^y_k} \left(\pi_1( \mathscr{Z}^y_{i+1})- \pi_1(\mathscr{Z}^y_{i})\right)+\sum_{i\notin J_k^y} \left(\pi_1( \mathscr{Z}^y_{i+1})- \pi_1(\mathscr{Z}^y_{i})\right)&\mbox{using $D^y_{rH_k}$ inside $\hat{\mathcal{F}}_k$}\\
    &\leqslant \beta r H_k + |J^y_k| \left(v_+-\frac{\delta}{2r}\right) r H_k +\left(\frac{L_k}{L_{\ucSk{S:k:paths}+1}}-1-|J_k^y|\right)\left(v_++\frac{3\eta}{2}\right)r H_k&\mbox{using \eqref{S:e:boundd} and \eqref{S:new_bound3}}\\
    &\leqslant \beta r H_k+ v_+ L_k^2 + \frac{3\eta}{2} L_k^2-\vert J_k^y\vert \left(\frac{\delta}{2r}+\frac{3\eta}{2}\right)r H_k\\
    &=\left(v_+-\frac{\eta}{4}+\left(\frac{7\eta}{2}+\beta\right) \frac{L_{\ucSk{S:k:paths}+1}}{L_k}\right)\,L_k^2&\mbox{by \eqref{S:e:inclusion_density}, $r=l_{\ucSk{S:k:paths}}$, $\eta=\frac{\delta}{4l_{\ucSk{S:k:paths}}}$}\\
    &<\left(v_+-\frac{\eta}{6}\right)\,L_k^2&\mbox{using $k>\ucSk{S:k:paths}+1$ and \eqref{S:e:condition_supp}}.
\end{align*}
See Figure \ref{S:f:J_y} for an illustration of the above bounds. In the end, we do have \eqref{S:e:inclusionFG}, which is true for any $y\in I_{L_k^2}$, so
\begin{align*}
    p_{L_k^2}\left(v_+-\frac{\eta}{6}\right)=\PP\left(\exists\,y\in I_{L_k^2},\,V_{L_k^2}^y\geqslant v_+-\frac{\eta}{6}\right)\leqslant \PP(\hat{\mathcal{A}}_k^c)+\PP(\hat{\mathcal{F}}_k^c)+\PP(\hat{\mathcal{G}}_k^c)+\PP(\hat{\mathcal{H}}_k^c)\xrightarrow[k\to\infty]{}0,
\end{align*}
using \eqref{S:e:proba_A_0}, \eqref{S:e:proba_F_0}, \eqref{S:e:proba_G_0} and \eqref{S:e:proba_H_0}. Therefore $\liminf_{H\to\infty} p_H(v_+-\eta/6)=0,$ where $v_+-\eta/6<v_+$. This contradicts the definition of $v_+$, therefore, $v_-=v_+$.

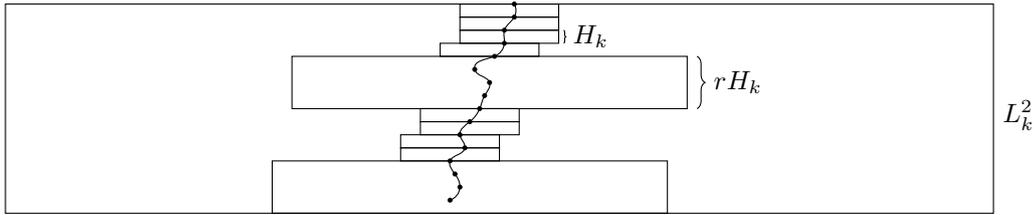
\begin{figure}
    \centering
      \begin{tikzpicture}[use Hobby shortcut,scale=1.3]
        \draw (0, 0) rectangle (10, 2.1333);
        \draw (4.50, 0.1333) .. (4.60, 0.2666) .. (4.55, 0.4) .. (4.50, 0.5333) ..          (4.65, 0.6666) .. (4.60, 0.8) .. (4.70, 0.9333) .. (4.80, 1.0666) .. (4.85, 1.2) .. (4.90, 1.3333) .. (4.75, 1.4666) .. (4.95, 1.6) .. (5.05, 1.7333) .. (5.05, 1.8666) .. (5.15, 2) .. (5.15, 2.1333);
        \draw[fill] (4.50, 0.1333) circle (0.02);
        \draw[fill] (4.60, 0.2666) circle (0.02);
        \draw[fill] (4.55, 0.4) circle (0.02);
        \draw[fill] (4.50, 0.5333) circle (0.02);
        \draw[fill] (4.65, 0.6666) circle (0.02);
        \draw[fill] (4.60, 0.8) circle (0.02);
        \draw[fill] (4.70, 0.9333) circle (0.02);
        \draw[fill] (4.80, 1.0666) circle (0.02);
        \draw[fill] (4.85, 1.2) circle (0.02);
        \draw[fill] (4.90, 1.3333) circle (0.02);
        \draw[fill] (4.75, 1.4666) circle (0.02);
        \draw[fill] (4.95, 1.6) circle (0.02);
        \draw[fill] (5.05, 1.7333) circle (0.02);
        \draw[fill] (5.05, 1.8666) circle (0.02);
        \draw[fill] (5.15, 2) circle (0.02);
        \draw[fill] (5.15, 2.1333) circle (0.02);
        \draw (2.70, 0) rectangle (6.70, 0.5333); 
        \draw (4.00, 0.5333) rectangle (5.00, 0.6666);
        \draw (4.00, 0.6666) rectangle (5.00, 0.8);
        \draw (4.20, 0.8) rectangle (5.20, 0.9333);
        \draw (4.20, 0.9333) rectangle (5.20, 1.0666);
        \draw (2.90, 1.0666) rectangle (6.90, 1.6); 
        \draw (4.40, 1.6) rectangle (5.40, 1.7333);
        \draw (4.60, 1.7333) rectangle (5.60, 1.8666);
        \draw (4.60, 1.8666) rectangle (5.60, 2);
        \draw (4.60, 2) rectangle (5.60, 2.1333);
        \draw[right] (10, 1) node {$L_k^2$};
        \draw [decorate,decoration={brace,amplitude=3pt}] (7, 1.6) -- (7, 1.0666) node [black, midway, right] {$\;rH_k$};
        \draw [decorate,decoration={brace,amplitude=.8pt}] (5.65, 1.8666) -- (5.65, 1.7333) node [black, midway, right] {$H_k$};
      \end{tikzpicture}
    \caption{The final bound in the proof of Lemma \ref{S:l:v_-=v_+}. The large boxes correspond to the displacements for $i\in J^y_k$ and the specific case $i=0$, and the small boxes for the rest.}
    \label{S:f:J_y}
  \end{figure}

{\color{black}

\section{Towards a complete LLN}\label{S:s:complete_LLN}
The next step of our work would be to prove a LLN for the random walk defined in Section \ref{S:ss:random_walk}, as is expressed in the following conjecture.

\begin{conjecture}[LLN]\label{S:c:LLN_temp}
There exists $\Delta\in\RR^2$ such that $$\PP\text{-almost surely},\;\;\; \frac{Z_n}{n}\xrightarrow[n\to\infty]{} \Delta.$$
\end{conjecture}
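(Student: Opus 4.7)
The plan is to reduce Conjecture \ref{S:c:LLN_temp} to a law of large numbers for the vertical hitting times: the existence of a constant $\lambda \in [\beta^{-1}, \infty)$ such that $\tau_H/H \to \lambda$ $\PP$-almost surely with polynomial rate. Once this is in hand, $Y_{\tau_H} = H$ gives $Y_{\tau_H}/\tau_H \to 1/\lambda$, and an interpolation argument analogous to the proof of Theorem \ref{S:t:LLN_spatial} at the end of Section \ref{S:ssss:global_lower_bound}, using Fact \ref{S:f:ball} and Proposition \ref{S:p:probability_of_going_down}, transfers the limit to $Y_n/n \to 1/\lambda$. Combined with Theorem \ref{S:t:LLN_spatial}, this yields $Z_n/n \to (\nu/\lambda, 1/\lambda) =: \Delta$.

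To produce the vertical LLN, I would exploit the cut-line structure from Assumption \ref{S:a:cut_line}. Define inductively $\tilde{H}_0 = 0$ and, for $i \geqslant 1$, $\tilde{H}_i = \tilde{H}_{i-1} + \Xi(\Theta_{\tilde{H}_{i-1}} Z)$, together with the crossing times $T_i = \tau_{\tilde{H}_i} - \tau_{\tilde{H}_{i-1}}$. By Assumption \ref{S:a:cut_line}, Fact \ref{S:f:ball}, and the Markov-type Proposition \ref{S:p:independence}, both $\Delta \tilde{H}_i := \tilde{H}_i - \tilde{H}_{i-1}$ and $T_i$ have uniformly exponential tails, and each pair is a measurable function of the environment and uniform variables inside a horizontal strip of random but exponentially bounded height. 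Since environments in vertically separated boxes are polynomially mixing (Assumption \ref{S:a:mixing_env}) and the uniform variables in disjoint strips are independent by construction, the sequence $\bigl((\Delta \tilde{H}_i, T_i)\bigr)_i$ inherits a quantitative polynomial mixing property.

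The next step is to deduce that $\frac{1}{N}\sum_{i=1}^N T_i \to \EE[T_1]$ and $\frac{1}{N}\sum_{i=1}^N \Delta \tilde{H}_i \to \EE[\Delta \tilde{H}_1]$ almost surely, yielding $\tau_{\tilde{H}_N}/\tilde{H}_N \to \lambda := \EE[T_1]/\EE[\Delta \tilde{H}_1]$. The natural route is a second-moment approach: bound $\mathrm{Var}\bigl(\sum_{i=1}^N T_i\bigr)$ by $O(N)$, using exponential tails combined with the summability of covariances allowed by $\alpha > 12$, then apply Borel--Cantelli along a geometric subsequence. Interpolation using Fact \ref{S:f:ball} between $\tilde{H}_N$ and $\tilde{H}_{N+1}$ then transfers the result from the cut-line subsequence to $\tau_H/H$ for all $H$.

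The main obstacle will be making this mixing LLN rigorous. Two issues stand out: the pairs $(\Delta \tilde{H}_i, T_i)$ are not strictly stationary, since the starting point $\mathcal{Z}_{i-1} = Z_{\tau_{\tilde{H}_{i-1}}}$ depends quenchedly on the environment below; and their domain of measurability is a strip of random height, which cannot feed directly into Fact \ref{S:p:mixing}. A workaround is to work at deterministic scales, decomposing $\tau_H$ along a deterministic vertical grid and approximating $T_i$ by truncated variants conditioned on cut-line heights, controlling the error through Assumption \ref{S:a:cut_line}. Crucially, there is no barrier property for the time coordinate---time flows only upward, so one random walk cannot delay another along the time axis---so the trap and threat machinery of Section \ref{S:ss:part2} does not transfer, and we are confined to quantitative decoupling. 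Carrying this out against only polynomial mixing will likely require $\alpha$ somewhat larger than the current $\alpha > 12$, and this variance estimate is where the bulk of the technical work would lie.
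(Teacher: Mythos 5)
First, be aware that the paper does not prove this statement: it is explicitly labelled a conjecture, and Section \ref{S:s:complete_LLN} only records the reduction you also start from (it suffices to show that $\tau_n/n$ converges almost surely) together with the reason the authors could not complete it. So there is no proof in the paper to match your proposal against; the question is whether your sketch closes the open problem, and it does not.

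The gap is in the middle step, where you pass from decoupling to a law of large numbers for $\frac{1}{N}\sum_{i=1}^N T_i$. The cut lines of Assumption \ref{S:a:cut_line} do not give a regeneration structure: the environment is not refreshed above a cut line, and more importantly the pairs $(\Delta\tilde{H}_i,T_i)$ are not identically distributed under $\PP$ for $i\geqslant 2$, because the annealed law of the environment seen from $Z_{\tau_{\tilde{H}_{i-1}}}$ is biased by the walk's past (this is the classical ``environment viewed from the particle'' obstruction, which the paper flags explicitly). Your variance bound plus Borel--Cantelli along a geometric subsequence can only show that $\frac{1}{N}\bigl(\sum_{i\leqslant N} T_i-\EE\bigl[\sum_{i\leqslant N} T_i\bigr]\bigr)\to 0$; to conclude $\tau_{\tilde{H}_N}/\tilde{H}_N\to\lambda$ you still need the Ces\`aro convergence of $\EE[T_i]$ (and of $\EE[\Delta\tilde{H}_i]$), which you assume by writing $\lambda=\EE[T_1]/\EE[\Delta\tilde{H}_1]$ but never establish. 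That convergence of the means is essentially the whole content of the conjecture. For the spatial direction the paper circumvents exactly this issue by defining $v_\pm$ through liminfs and then forcing the liminf to be a limit via the barrier/trap/threat machinery of Section \ref{S:ss:part2}; you correctly observe that no barrier property exists for the time coordinate, but you do not supply a substitute mechanism, and quantitative decoupling alone cannot rule out that the running averages of $\EE[T_i]$ oscillate between two distinct accumulation points. Until that is addressed, the argument does not go through, no matter how large $\alpha$ is taken.
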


With Theorem \ref{S:t:direction}, we have a result that is weaker - albeit very interesting both in itself and in the methods used to prove it. Indeed, when assuming conjecture \ref{S:c:LLN_temp}, we immediately get Theorem \ref{S:t:direction} with $\chi=\frac{\Delta}{\Vert\Delta\Vert}$. However, going from Theorem \ref{S:t:direction} to an actual LLN is not trivial at all. It is actually sufficient to show that $\frac{\tau_n}{n}$ converges almost surely to derive the LLN from Theorem \ref{S:t:LLN_spatial}. In other words, what we are missing at this point is the understanding of the temporal behavior of $Z$.

This is \textit{a priori} a hard question, because the environment from the point of view of the random walk may not behave very nicely under our assumptions. We used renormalization methods to get around this issue, but it is unclear to which events describing the temporal behaviour of $Z$ we could apply a renormalization method.

\section{Applications}\label{S:s:applications}
In this section, we give examples of environments that satisfy the assumptions introduced in Section \ref{S:ss:environment}. These are taken from classical 1D dynamic or 2D static models for which we can control the vertical dependencies.


\subsection{One-dimensional dynamic environments}
In \cite{BHT} are presented several models of one-dimensional dynamic environments that have at most polynomial time correlations, with time evolving in $\RR$. More precisely, let $S$ be a topological space (usually, it is a subset of $\NN$). A continuous-time one-dimensional dynamic environment is a random variable on some probability space $(\Omega,\mathcal{T},\sP)$ taking values in $\mathcal{D}(\RR,S^\ZZ)$, the space of càdlàg functions from $\RR$ to $S^\ZZ$ endowed with the subsequent Borel $\sigma$-algebra. The state of environment $\eta$ at time $t$ and site $y$ is described by $\eta_t(y)$. We can define a two-dimensional (static) environment $\omega$ as a process on the same probability space by defining, for $x=(y,n)\in\ZZ^2$, $\omega(x)=\eta_n(y).$ If $\eta$ is translation-invariant and mixes polynomially in the vertical direction, then the same can be said of $\omega.$

Examples of environments satisfying such assumptions are given in \cite{BHT}: the contact process, Markov processes with a positive spectral gap, the East model and independent renewal chains (after adapting their definition so that it is valid for negative times too).

\subsection{Boolean percolation}
In \cite{ATT}, the authors show a mixing property for the Boolean percolation process in $\RR^2$, a model first introduced in \cite{Gil}. Here is a brief account of what this model consists of and how it can be used in the framework of this paper.

Heuristically, Boolean percolation can be defined using a Poisson point process of intensity $\lambda>0$ in $\RR^2$, and allocating independently to each point in this point process a ball of random radius, sampled from a common distribution $\sigma$ in $\RR_+$. One way to make this more rigorous is that chosen in \cite{ATT}.



For a subset $\eta\subseteq \RR^2\times \R_+$, let $$\mathcal{O}(\eta)=\bigcup_{(x,z)\in \eta} B(x,z),$$
where $B(x,z)$ is the Euclidean open ball of center $x$ and radius $z$. 

Let $\lambda>0$ and $\sigma$ be a probability measure on $(\RR_+,\mathcal{B}(\RR_+))$. We assume that $\sigma$ satisfies the following moment condition: there exists $\alpha>0$ such that
\ncS{S:c:Boolean2}
\begin{align}\label{S:e:moment_condition_Boolean}
    \int_0^\infty z^{2+\alpha}\,\dd \sigma(z)=\ucS{S:c:Boolean2}<\infty.
\end{align}
This assumption implies, using Markov's inequality, that the radii of our Boolean percolation have tails that decrease with a polynomial rate of exponent $\alpha+2$.

Let $\eta$ be a Poisson point process in $\RR^2\times \RR_+$ with intensity $\lambda\, \dd x \otimes \dd\sigma(z)$, where $\dd x$ is the Lebesgue measure on $\RR^2$, defined on a probability space $(\Omega,\mathcal{T},\sP).$

For every site $x\in\ZZ^2$, we let 
$$\omega(x)=\left\{\begin{array}{ll}1&\mbox{if $x\in\mathcal{O}(\eta)$;}\\ 0&\mbox{otherwise.}\end{array}\right.$$

Let us now state a mixing property for this environment. Proposition \ref{S:p:mixing_Boolean} gives a stronger property than the mixing property we want to get, using translation invariance, the right choice of $\kappa$ and provided that $\alpha$ is large enough. For $r>0$, let $B^\infty(r)=[-r,r]^2\cap\ZZ^2$.

\ncS{S:c:Boolean}
\begin{proposition}[\cite{ATT}, Proposition 2.2]\label{S:p:mixing_Boolean}
    Assume \eqref{S:e:moment_condition_Boolean}. For every $\kappa>0$, there exists $\ucS{S:c:Boolean}=\ucS{S:c:Boolean}(\lambda,\sigma,\kappa)>0$ such that for all $r\geqslant 1$ and for all pairs of functions $f_1,f_2:\{0,1\}^{\ZZ^2}\rightarrow [-1,1]$ such that $f_1(\omega)$ is $\sigma(\omega\vert_{B^\infty(r)})$-measurable and $f_2(\omega)$ is $\sigma(\omega\vert_{B^\infty(r(1+\kappa))^c})$-measurable, we have
    $$\E[f_1(\omega)f_2(\omega)]\leqslant \E[f_1(\omega)]\E[f_2(\omega)]+ \ucS{S:c:Boolean}\, r^{-\alpha}.$$
\end{proposition}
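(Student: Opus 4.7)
\medskip
\noindent
\textbf{Proof proposal.} The plan is to exploit the independence properties of Poisson point processes on disjoint Borel subsets of the phase space $\RR^2\times\RR_+$, by splitting $\eta$ according to which of the two regions $B^\infty(r)$ and $B^\infty(r(1+\kappa))^c$ each ball can influence. Set
\begin{align*}
A_1 &= \{(x,z)\in \RR^2\times\RR_+ :\, B(x,z)\cap B^\infty(r)\neq \emptyset\},\\
A_2 &= \{(x,z)\in \RR^2\times\RR_+ :\, B(x,z)\cap B^\infty(r(1+\kappa))^c\neq \emptyset\}.
\end{align*}
The crucial geometric observation is that any $(x,z)\in A_1\cap A_2$ must satisfy $z\geqslant \kappa r/2$. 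Indeed, if $B(x,z)$ meets both regions, it contains points $p_1\in B^\infty(r)$ and $p_2\in B^\infty(r(1+\kappa))^c$, and since one coordinate of $p_2$ exceeds $r(1+\kappa)$ in absolute value while the same coordinate of $p_1$ is at most $r$, we get $|p_1-p_2|_2\geqslant |p_1-p_2|_\infty\geqslant \kappa r$; the triangle inequality $|p_1-p_2|_2\leqslant 2z$ then forces $z\geqslant \kappa r/2$.

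Next, I would use the fact that restrictions of $\eta$ to the three disjoint sets $A_1\setminus A_2$, $A_1\cap A_2$, $A_2\setminus A_1$ are independent Poisson point processes. Let $\tilde\omega_1$ (resp.\ $\tilde\omega_2$) denote the Boolean environment built using only the balls of $\eta\cap(A_1\setminus A_2)$ (resp.\ $\eta\cap(A_2\setminus A_1)$). Then $\tilde\omega_1\vert_{B^\infty(r)}$ and $\tilde\omega_2\vert_{B^\infty(r(1+\kappa))^c}$ are measurable with respect to independent $\sigma$-algebras, and on the good event $\mathcal{B}^c := \{\eta(A_1\cap A_2)=0\}$ we have $\omega\vert_{B^\infty(r)}=\tilde\omega_1\vert_{B^\infty(r)}$ and $\omega\vert_{B^\infty(r(1+\kappa))^c}=\tilde\omega_2\vert_{B^\infty(r(1+\kappa))^c}$, because any ball affecting $B^\infty(r)$ that is not in $A_1\setminus A_2$ would have to be in $A_1\cap A_2$ (and similarly for the other region). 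Since $f_1,f_2$ take values in $[-1,1]$, this yields $|f_i(\omega)-f_i(\tilde\omega_i)|\leqslant 2\mathbf{1}_{\mathcal{B}}$, and a standard manipulation gives
$$\bigl|\E[f_1(\omega)f_2(\omega)]-\E[f_1(\omega)]\E[f_2(\omega)]\bigr|\;\leqslant\; 6\,\sP(\mathcal{B}),$$
the independence of $\tilde\omega_1$ and $\tilde\omega_2$ being used to eliminate the factorised term.

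It remains to bound $\sP(\mathcal{B})$ polynomially in $r$. By Markov's inequality and the Campbell formula,
$$\sP(\mathcal{B})\;\leqslant\; \E[\eta(A_1\cap A_2)]\;=\;\lambda\int_{A_1\cap A_2}\dd x\,\dd\sigma(z).$$
For $(x,z)\in A_1\cap A_2$ we have $z\geqslant \kappa r/2$ and $|x|_2\leqslant r\sqrt{2}+z$, so the integral is at most $\pi\int_{\kappa r/2}^{\infty}(r\sqrt{2}+z)^2\,\dd\sigma(z)$. On this integration domain $r\sqrt{2}+z\leqslant (1+2\sqrt 2/\kappa)z$, and moreover $z^2\leqslant (\kappa r/2)^{-\alpha}\,z^{2+\alpha}$. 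Combining these bounds with the moment condition \eqref{S:e:moment_condition_Boolean} gives $\sP(\mathcal{B})\leqslant \ucS{S:c:Boolean}(\lambda,\sigma,\kappa)\,r^{-\alpha}$, which completes the argument.

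The only real obstacle is the clean verification that the three restricted Poisson processes suffice to reconstruct $\omega$ on each region separately on $\mathcal{B}^c$; the rest is bookkeeping of constants and elementary geometry in $\RR^2$. No subtle mixing estimate on the Boolean model itself is needed; the whole argument rests on Poisson independence plus the tail bound on the radius distribution.
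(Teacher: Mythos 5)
The paper does not actually prove this statement; it simply cites it as Proposition~2.2 of \cite{ATT}, so there is no in-paper proof to compare against. Your argument, however, is correct and is the natural decoupling proof for the Boolean model: you split $\eta$ according to the three disjoint influence regions $A_1\setminus A_2$, $A_1\cap A_2$, $A_2\setminus A_1$, use the independence of Poisson restrictions to disjoint Borel sets, observe that any ball simultaneously influencing $B^\infty(r)$ and $B^\infty(r(1+\kappa))^c$ must have radius at least $\kappa r/2$ (since it must contain two points at $\ell^\infty$-distance at least $\kappa r$), and then bound $\sP(\eta(A_1\cap A_2)\geqslant 1)$ by Markov, Campbell and the moment condition~\eqref{S:e:moment_condition_Boolean}, obtaining $\sP(\mathcal{B})\leqslant c(\lambda,\sigma,\kappa)\,r^{-\alpha}$. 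The telescoping inequality giving $\bigl|\E[f_1(\omega)f_2(\omega)]-\E[f_1(\omega)]\E[f_2(\omega)]\bigr|\leqslant C\,\sP(\mathcal{B})$ is correct in structure; if one is careful the constant comes out as $8$ rather than your claimed $6$ (four from the product term, four from comparing $\E[f_1]\E[f_2]$ with $\E[f_1(\tilde\omega_1)]\E[f_2(\tilde\omega_2)]$), but this is immaterial since the constant is absorbed into $\ucS{S:c:Boolean}$. One small remark: you can drop the hypothesis $r\geqslant 1$ from your reasoning entirely, as the trivial bound $|\mathrm{Cov}|\leqslant 2$ handles the small-$r$ regime; the restriction is there in the cited statement only so that the conclusion is nontrivial.
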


\subsection{Gaussian fields} \ncS{S:c:dec_Gaussian} \ncS{S:c:dec_Gaussian2}
In \cite{BHKT} (Section 6.1), the authors introduce an environment on $\ZZ^2$ using Gaussian fields. This environment satisfies a mixing assumption that is stronger than ours. Here is a brief account of what we need from \cite{BHKT} in our framework.

Let $q:\ZZ^2\rightarrow \RR_+$ a non-zero function such that
\begin{align}\label{S:e:hyp1Gaussian}
\forall (x_1,x_2)\in\ZZ^2,\;q(x_1,x_2)=q(-x_1,x_2).
\end{align}
We also assume that there exists $\lambda>2$ and $\ucS{S:c:dec_Gaussian}>0$ such that
\begin{align}\label{S:e:hyp2Gaussian}
\forall x\in\ZZ^2\setminus\{0\},\;q(x)\leqslant \ucS{S:c:dec_Gaussian} |x|^{-\lambda}.
\end{align}
We also consider a family $(W_x)_{x\in\ZZ^2}$ of i.i.d. standard normal random variables and we define the Gaussian field $(g_x)_{x\in\ZZ^2}$ by setting
$$g_x=\sum_{y\in\ZZ^2} q(x-y)\,W_y.$$
The environment we are interested in is given for $x\in\ZZ^2$, by $\omega(x)=\mathrm{sign}(g_x)\in\{\pm 1\}$. By construction $\omega$ is translation-invariant. It remains to check that it satisfies our mixing assumption. The authors of \cite{BHKT} show the stronger property that follows.
\begin{proposition}[\cite{BHKT}, Lemma 6.2]
    Recall \eqref{S:e:hyp1Gaussian} and \eqref{S:e:hyp2Gaussian}. There exists $\ucS{S:c:dec_Gaussian2}>0$ such that for every integer $h\geqslant 2$ and every box $C=[a,a+a']\times [b,b+b']\subseteq \ZZ^2$ with $a',b'\geqslant 1$, there exists a coupling between $\omega$ and a field $\omega^{C,h}$ such that
    $$\sP(\omega\neq \omega^{C,h})\leqslant \ucS{S:c:dec_Gaussian2} (a'b'+(a'+b')h+h^2)\,h^{-\lambda+3/2}$$
    and, if $A\subseteq C$ and $B\subseteq \ZZ^2$ satisfy $d(A,B)>h$, then $\omega^{C,h}\vert_A$ and $\omega^{C,h}\vert_B$ are independent.
\end{proposition}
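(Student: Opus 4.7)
The plan is to construct $\omega^{C,h}$ as the sign of a hybrid Gaussian field $\tilde g$ that has the same one-dimensional marginals as $g$ but enforces strict local independence near $C$. The single analytic input I would use is the exact Gaussian sign identity: for a centered bivariate Gaussian vector $(X,Y)$ with $\mathrm{Var}(X)=\mathrm{Var}(Y)=\sigma^2$ and correlation $\rho\in[-1,1]$, one has $\sP(\mathrm{sign}(X)\neq\mathrm{sign}(Y))=\arccos(\rho)/\pi\leqslant c\sqrt{1-\rho}$. The strategy is then to show that $\tilde g_x$ differs from $g_x$ only through a controlled loss of correlation, and to sum those losses over the relevant spatial region.

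Construction. Let $(W'_y)_{y\in\ZZ^2}$ be an independent copy of $(W_y)$ and let $(R_x)_{x\in\ZZ^2}$ be a family of independent standard Gaussians, also independent of everything else. Let $C^{+h/2}$ denote the $\ell^\infty$-neighborhood of width $h/2$ around $C$ and set $\sigma_h^2=\sum_{|z|>h/2}q(z)^2$. For $x\in C^{+h/2}$ define
\begin{align*}
\tilde g_x=\sum_{y\in\ZZ^2,\,|x-y|\leqslant h/2}q(x-y)\,W_y+\sigma_h R_x,
\end{align*}
which has the same marginal $\mathcal N(0,\|q\|_2^2)$ as $g_x$. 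For $x\notin C^{+h/2}$, define $\tilde g_x=\sum_{y}q(x-y)W_y^{(x)}$ with $W_y^{(x)}=W_y$ if $y\notin C^{+h/2}$ and $W_y^{(x)}=W'_y$ otherwise; again this is $\mathcal N(0,\|q\|_2^2)$. Set $\omega^{C,h}=\mathrm{sign}(\tilde g)$. For the independence claim, note that for $A\subseteq C$ the field $\tilde g|_A$ is measurable with respect to $\{W_y:y\in A^{+h/2}\}\cup\{R_x:x\in A\}$, with $A^{+h/2}\subseteq C^{+h/2}$. For $B$ satisfying $d(A,B)>h$, the points of $B\cap C^{+h/2}$ contribute $W$'s in their own $h/2$-neighborhoods (disjoint from $A^{+h/2}$) plus their own $R_x$'s, while points of $B\setminus C^{+h/2}$ contribute $W$'s outside $C^{+h/2}$ and $W'$'s independent of $W$; none of these overlap with the variables used by $\tilde g|_A$, so $\omega^{C,h}|_A$ and $\omega^{C,h}|_B$ are independent.

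Total-variation bound. By a union bound, $\sP(\omega\neq\omega^{C,h})\leqslant \sum_{x\in\ZZ^2}\sP(\mathrm{sign}(g_x)\neq\mathrm{sign}(\tilde g_x))$. For $x\in C^{+h/2}$, a direct computation gives $1-\mathrm{Cor}(g_x,\tilde g_x)=\|q\|_2^{-2}\sum_{|z|>h/2}q(z)^2\leqslant c\,h^{2-2\lambda}$, using \eqref{S:e:hyp2Gaussian} and $\lambda>2$, so the Gaussian sign identity gives a sign-flip probability at most $c\,h^{1-\lambda}$. Summing over the $(a'+h)(b'+h)\leqslant a'b'+(a'+b')h+h^2$ points in $C^{+h/2}$ produces the announced product of factors. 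For $x\notin C^{+h/2}$ at $\ell^\infty$-distance $d\geqslant 1$ from $C^{+h/2}$, the correlation loss becomes $\|q\|_2^{-2}\sum_{y\in C^{+h/2}}q(x-y)^2\leqslant c(a'+h)(b'+h)\,d^{-2\lambda}$; summing $\sqrt{(a'+h)(b'+h)}\,d^{-\lambda}$ over all such $x$ converges since $\lambda>2$ and yields a lower-order contribution. Regrouping and slightly weakening the exponent from $h^{1-\lambda}$ to $h^{3/2-\lambda}$ to absorb all constants and lower-order terms gives the stated bound.

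Main obstacle. The delicate point is not any one calculation but the simultaneous satisfaction of the TV bound and the strict independence for \emph{every} pair $(A,B)$ with $A\subseteq C$ and $d(A,B)>h$. Leaving $\tilde g_x=g_x$ unchanged for $x\notin C^{+h/2}$ would break the independence, because $g_x$ involves the same $W_y$'s as $\tilde g|_A$ through the long-range tail of $q$; conversely, truncating $\tilde g_x$ globally would force the TV cost to diverge. The key design choice is therefore the asymmetric replacement of only those $W_y$'s with $y\in C^{+h/2}$ by independent copies for $x$ outside $C^{+h/2}$, which is the minimal modification that decouples the field $\tilde g|_A$ from the rest while keeping the per-site modification cost summable under the polynomial tail \eqref{S:e:hyp2Gaussian}.
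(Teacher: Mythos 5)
The paper does not prove this proposition; it is quoted directly from \cite{BHKT} (Lemma 6.2). I therefore evaluate your argument on its own merits.

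Your independence argument is correct, and your estimate for sites $x\in C^{+h/2}$ is fine: the correlation loss $\sigma_h^2/\sum_z q(z)^2$ is of order $h^{2-2\lambda}$, each such site flips sign with probability of order $h^{1-\lambda}$, and multiplying by the number of sites in $C^{+h/2}$, roughly $(a'+h)(b'+h)$, stays within the stated bound.

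The gap is your claim that the sites $x\notin C^{+h/2}$ give ``a lower-order contribution.'' They do not. Fix an $x$ at sup-norm distance $1$ from $C^{+h/2}$; there is some $y\in C^{+h/2}$ with $|x-y|=1$, and since \eqref{S:e:hyp2Gaussian} is only an \emph{upper} bound, nothing forces $q$ to vanish there. If for instance $q(1,0)>0$, then $1-\mathrm{Cor}(g_x,\tilde g_x)=\sum_{y\in C^{+h/2}}q(x-y)^2/\sum_z q(z)^2$ is bounded below by the positive constant $q(1,0)^2/\sum_z q(z)^2$, independently of $h$, so the sign-flip probability at that $x$ is bounded below by some $c_q>0$. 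Already one such site gives $\sP(\omega\neq\omega^{C,h})\geqslant c_q$ uniformly in $h$, whereas for fixed $a',b'$ the stated bound $(a'b'+(a'+b')h+h^2)h^{3/2-\lambda}$ tends to $0$ as $h\to\infty$ once $\lambda>7/2$ (and the application requires $\lambda>31/2$). More globally, there are of order $a'+b'+h$ sites in the first exterior ring, so your union bound over the exterior contributes a term of order $a'+b'+h$, carrying no negative power of $h$ at all; this cannot be absorbed by weakening $h^{1-\lambda}$ to $h^{3/2-\lambda}$. The fact that the series $\sum_d d^{-\lambda}$ converges is not the issue --- the issue is that the first few terms are not small.

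You correctly identified the underlying tension: making $\tilde g|_A$ (for $A\subseteq C$) independent of the whole exterior field forces some modification of the field on essentially all of $\ZZ^2$, and any such modification has a TV cost. But hard-swapping every $W_y$ with $y\in C^{+h/2}$ in the exterior definition makes the per-site cost on the boundary layer a constant, which is fatal. A proof of the lemma must handle that boundary layer differently --- e.g.\ via a single total-variation estimate for the joint Gaussian vector on a neighbourhood of $C$, or by redefining the field only on a bounded neighbourhood of $C$ while keeping it equal to $\omega$ far away --- rather than via a site-by-site sign-flip union bound over all of $\ZZ^2$. You should consult Lemma 6.2 of \cite{BHKT} directly to see how the rate $h^{3/2-\lambda}$ is actually obtained.
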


This mixing property implies that if $B$ and $B'$ are two boxes as in Definition \ref{d:mixing}, and if $f_1$ and $f_2$ are two measurable functions on $\{\pm 1\}^{\ZZ^2}$ such that $f_1(\omega)$ is $\sigma(\omega\vert_{B\cap \ZZ^2})$-measurable and $f_2(\omega)$ is $\sigma(\omega\vert_{B'\cap\ZZ^2})$-measurable,
$$\E[f_1(\omega)f_2(\omega)]\leqslant \E[f_1(\omega)]\E[f_2(\omega)]+ch^{-\alpha}$$
for some constant $c=c(a,b)>0$, where $\alpha=-(2-\lambda+3/2).$ Therefore Assumption \ref{S:a:mixing_env} is satisfied provided that $\lambda>31/2$.

\subsection{Factors of i.i.d. with light-tail finite radii}
Let $Y=(Y_x)_{x\in\ZZ^2}$ be a family of i.i.d. random variables in $[0,1]$ on a probability space $(\Omega,\mathcal{T},\sP)$. Let $\omega:\ZZ^2\rightarrow \{0,1\}$ be a random variable on the same space. We say that $\omega$ is a factor of $Y$ with finite radius if there exist two measurable functions $\phi:[0,1]^{\ZZ^2}\rightarrow \{0,1\}$ and $\rho:[0,1]^{\ZZ^2}\rightarrow \RR_+$ such that:
\begin{itemize}[itemsep=0pt]
    \item For all $x\in\ZZ^2$, $\omega(x)=\phi(\theta^x Y)$, where $\theta^x \mathbf{y}=(y_{x+v})_{v\in\ZZ^2}$ for every $\mathbf{y}=(y_v)\in [0,1]^{\ZZ^2}$;
    \item For $\sP$-almost all $\mathbf{y},\mathbf{y'}\in [0,1]^{\ZZ^2}$ that coincide outside of $B(o,\rho(\mathbf{y}))$, $\phi(\mathbf{y})$ and $\phi(\mathbf{y}')$ are equal at $o$.
\end{itemize}

This implies that we only need to look at $Y$ in a ball of radius $\rho(Y)$ around a site $x\in\ZZ^2$ to determine $\omega(x)$. Random variable $R=\rho(Y)$ is called the radius of $\omega.$

\ncS{S:c:factors}\ncS{S:c:mixing_factors}
Note that $\omega$ is translation-invariant by construction. In order to show a mixing property for $\omega$, we need to make an additional assumption on the radius: we assume that there exist $\alpha>0$ and $\ucS{S:c:factors}>0$ such that for all $r>0$,
\begin{align}\label{S:e:assumption_radius}
    \sP(R>r)\leqslant \ucS{S:c:factors}\,r^{-\alpha}.
\end{align}
\begin{proposition}\label{S:p:mixing_factors}
There exists $\ucS{S:c:mixing_factors}>0$ such that for every $h>0,$ for every pair of boxes $B_1$ and $B_2$ such that $\mathrm{sep}(B_1,B_2)\geqslant h$, for every pair of $[0,1]$-valued functions $f_1$ and $f_2$ on $\{0,1\}^{\ZZ^2}$ such that $f_1(\omega)$ is $\sigma(\omega\vert_ {B_1\cap\ZZ^2})$-measurable and $f_2(\omega)$ is $\sigma(\omega\vert_ {B_2\cap\ZZ^2})$-measurable,
    \begin{align}\label{S:a:mixing_env_factors} \E[f_1(\omega)f_2(\omega)]\leqslant \E[f_1(\omega)]\E[f_2(\omega)]+\ucS{S:c:mixing_factors}\, h^{-\alpha}.
    \end{align}
\end{proposition}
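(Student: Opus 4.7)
The plan is to couple $\omega$ with two ``local'' versions $\omega^{(1)},\omega^{(2)}$ obtained by resampling $Y$ outside small neighborhoods of $B_1$ and $B_2$, and to exploit that on a high-probability event these match $\omega$ on the relevant boxes. Set $r=h/2$ and let $N_r(B_i)$ denote the $r$-neighborhood of $B_i$ in $\RR^2$; the assumption $\mathrm{sep}(B_1,B_2)\geqslant h$ gives $N_r(B_1)\cap N_r(B_2)=\emptyset$. On an enlarged probability space, introduce two independent copies $Y',Y''$ of $Y$, and, for $i\in\{1,2\}$, let $\tilde Y^{(i)}(z)=Y(z)$ for $z\in N_r(B_i)$ and $\tilde Y^{(i)}(z)=Y'(z)$ (for $i=1$) or $Y''(z)$ (for $i=2$) elsewhere. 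Set $\omega^{(i)}(x)=\phi(\theta^x\tilde Y^{(i)})$.

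First I would check that $\omega^{(i)}|_{B_i}=\omega|_{B_i}$ on the event $\mathcal{B}_i=\{\rho(\theta^x Y)\leqslant r\text{ for all }x\in B_i\cap\ZZ^2\}$: for such $x$, the configurations $\theta^x Y$ and $\theta^x\tilde Y^{(i)}$ agree on the ball of radius $\rho(\theta^x Y)$ around the origin (contained in $B(o,r)$), so the defining property of finite-radius factors yields $\phi(\theta^x Y)=\phi(\theta^x\tilde Y^{(i)})$. Next, by construction, $\omega^{(1)}|_{B_1}$ is a function of $Y|_{N_r(B_1)}$ and $Y'$, while $\omega^{(2)}|_{B_2}$ is a function of $Y|_{N_r(B_2)}$ and $Y''$. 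By the disjointness of $N_r(B_1),N_r(B_2)$ and the mutual independence of $Y,Y',Y''$, the variables $f_1(\omega^{(1)})$ and $f_2(\omega^{(2)})$ are independent, so
\[
\E\bigl[f_1(\omega^{(1)})\,f_2(\omega^{(2)})\bigr]=\E\bigl[f_1(\omega^{(1)})\bigr]\,\E\bigl[f_2(\omega^{(2)})\bigr].
\]

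The bound \eqref{S:a:mixing_env_factors} would then follow from a telescoping argument. Since $f_i\in[0,1]$ one has $|f_i(\omega)-f_i(\omega^{(i)})|\leqslant \mathbf{1}_{\mathcal{B}_i^c}$, so writing $f_i(\omega)=f_i(\omega^{(i)})+(f_i(\omega)-f_i(\omega^{(i)}))$ and applying the triangle inequality gives
\[
\bigl|\E[f_1(\omega)f_2(\omega)]-\E[f_1(\omega)]\E[f_2(\omega)]\bigr|\leqslant 2\bigl(\sP(\mathcal{B}_1^c)+\sP(\mathcal{B}_2^c)\bigr).
\]
By translation invariance and a union bound, $\sP(\mathcal{B}_i^c)\leqslant|B_i\cap\ZZ^2|\cdot\sP(R>r)\leqslant \ucS{S:c:factors}|B_i\cap\ZZ^2|(h/2)^{-\alpha}$. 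In the intended uses (see Definition \ref{d:mixing} and Assumption \ref{S:a:mixing_env}), $B_i$ has diameter and height $O(h)$, hence $|B_i\cap\ZZ^2|=O(h^2)$, producing a bound of order $h^{-(\alpha-2)}$; absorbing the loss of $2$ into the tail exponent yields \eqref{S:a:mixing_env_factors}.

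The main subtlety is that the event $\mathcal{B}_i$ and the random radii $\rho(\theta^x Y)$ may a priori depend on $Y$ everywhere, not only on $Y|_{B(x,r)}$, so $\omega^{(i)}|_{B_i}$ is not literally a local function of $Y$. What rescues the argument is that one only uses (i) the deterministic equality $\omega^{(i)}|_{B_i}=\omega|_{B_i}$ on $\mathcal{B}_i$, which controls the error in the telescoping step, and (ii) the product structure of the coupling $(\tilde Y^{(1)},\tilde Y^{(2)})$, which produces independence of $f_1(\omega^{(1)})$ and $f_2(\omega^{(2)})$ irrespective of what $\mathcal{B}_i$ depends on. The $|B_i\cap\ZZ^2|$ factor generated by the union bound is the reason a small loss in the exponent is unavoidable in this approach.
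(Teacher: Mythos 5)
Your proposal is correct in substance and follows essentially the same route as the paper's proof: localize $f_i(\omega)$ as a function of $Y$ restricted to a neighborhood of $B_i$, use independence of $Y$ over the two disjoint neighborhoods, and control the error by the tail of the radius. The paper replaces $Y$ by $0$ outside $\tilde B_i=(B_i+[-h/3,h/3]^2)\cap\ZZ^2$ rather than resampling, which is only a cosmetic difference; its choice of $h/3$ also sidesteps the boundary case $\mathrm{sep}(B_1,B_2)=h$, where your closed $h/2$-neighborhoods may share lattice points and break the independence step (take $r$ strictly smaller than $h/2$). The one substantive point is the exponent: your union bound over $x\in B_i\cap\ZZ^2$ honestly yields $\ucS{S:c:factors}\,|B_i\cap\ZZ^2|\,(h/2)^{-\alpha}$, hence $h^{-(\alpha-2)}$ for boxes of side $O(h)$ and no uniform constant at all for arbitrary boxes, whereas the statement asserts $\ucS{S:c:mixing_factors}\,h^{-\alpha}$. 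You have not missed an idea here: the paper's own proof has the identical issue, since its indicator $\mathbf{1}_{R\leqslant h/3}$ should really read $\mathbf{1}_{\{\rho(\theta^x Y)\leqslant h/3\ \forall x\in B_i\cap\ZZ^2\}}$ and costs the same union bound. You are simply more explicit that the argument delivers exponent $\alpha-2$, which, as you note, still gives Assumption \ref{S:a:mixing_env} once the tail exponent in \eqref{S:e:assumption_radius} is taken correspondingly larger.
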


\begin{proof}
Let us define, for $i\in\{1,2\}$, a box $\tilde{B}_i=(B_i+[-h/3,h/3]^2)\cap\ZZ^2$ and a function $\psi_{i}:[0,1]^{\tilde{B}_i}\rightarrow [0,1]^{\ZZ^2}$ by setting, for $\mathbf{y}=(y_v)_{v\in \tilde{B}_i}\in [0,1]^{\tilde{B}_i}$ and $x\in\ZZ^2$, $\psi_{\tilde{B}_i}(y)_x=y_x \mathbf{1}_{\tilde{B}_i}(x)$. Also, we define $g_i=f_i\circ \phi\circ \psi_{\tilde{B}_i}$, which takes arguments in $[0,1]^{\tilde{B}_i}$. The crucial idea is that if $R\leqslant h/3$, we have $f_i(\omega)=g_i(Y_{\tilde{B}_i})$, where $Y_{\tilde{B}_i}=(Y_v)_{v\in\tilde{B}_i}$. Now, remark that $Y_{\tilde{B}_1}$ and $Y_{\tilde{B}_2}$ are independent, since $\tilde{B}_1$ and $\tilde{B}_2$ are disjoint. Therefore,
\begin{align*}
    \E[f_1(\omega)\,f_2(\omega)]
    &\leqslant \E[g_1(Y_{\tilde{B}_1})\,g_2(Y_{\tilde{B}_2})\,\mathbf{1}_{R\leqslant h/3}]+ \sP(R>h/3)\\
    &\leqslant \E[g_1(Y_{\tilde{B}_1})]\,\E[g_2(Y_{\tilde{B}_2})]+\ucS{S:c:factors}\, h^{-\alpha}\\
    &\leqslant (\E[f_1(\omega)]+\ucS{S:c:factors}\, h^{-\alpha})\,(\E[f_2(\omega)]+\ucS{S:c:factors}\, h^{-\alpha})+\ucS{S:c:factors}\, h^{-\alpha}\\
    &\leqslant \E[f_1(\omega)]\,\E[f_2(\omega)]\,+ch^{-\alpha}.
\end{align*}
\end{proof}
Taking a closer look at this proof, we can see that in fact the mixing property holds not only for boxes that are vertically separated, but for any two sets of $\ZZ^2$ with vertical distance at least $h$.

\section*{Acknowledgments} This work could not have been possible without the extensive help of my PhD supervisors Oriane \textsc{Blondel} (ICJ, Villeurbanne, France) and Augusto \textsc{Teixeira} (IMPA, Rio de Janeiro, Brazil), and I would like to take this opportunity to thank them wholeheartedly for their involvement, kindness and patience. This work was supported by a doctoral contract provided by CNRS. Finally, my working in person with Augusto \textsc{Teixeira} in IST (Lisbon, Portugal) was enabled by grants from ICJ and Labex Milyon, and I would like to thank IST for welcoming me away from ICJ.

\bibliographystyle{alpha}
\bibliography{Biblio}

\end{document}